\documentclass[12 pt]{article}
\usepackage[utf8]{inputenc}
\usepackage{amsfonts,amsmath}
\usepackage{amssymb}
\usepackage{amsthm}
\usepackage{hyperref}
\usepackage{geometry}
\geometry{a4paper,top=2.5cm,bottom=2.5cm,left=2.5cm,right=2.5cm,heightrounded,bindingoffset=5mm}
\usepackage{comment}

\usepackage{color}
\usepackage{esint}
\usepackage{graphicx}
\usepackage{xcolor}
\usepackage{tikz}
\usetikzlibrary{mindmap}
\theoremstyle{definition} \newtheorem{definition}{Definition}[section]
\theoremstyle{definition} \newtheorem{remark}[definition]{Remark}
\theoremstyle{plain} \newtheorem{lemma}[definition]{Lemma}
\theoremstyle{plain} \newtheorem{proposition}[definition]{Proposition}
\theoremstyle{plain} \newtheorem{theorem}[definition]{Theorem}
\theoremstyle{plain} \newtheorem{corollary}[definition]{Corollary}
\theoremstyle{definition} 
\theoremstyle{definition} 
\theoremstyle{definition} \newtheorem{conjecture}[definition]{Conjecture}

\DeclareMathOperator{\BV}{BV}

\newcommand{\R}{\mathbb{R}}

\newcommand{\N}{\mathbb{N}}

\numberwithin{equation}{section}

\def\XXint#1#2#3{{\setbox0=\hbox{$#1{#2#3}{\int}$ }
		\vcenter{\hbox{$#2#3$ }}\kern-.6\wd0}}

\title{An alternative approach to the discrete Shnirelman's inequality}
\author{Martina Zizza, \footnote{Max Planck Institute for Mathematics in the Sciences\\
\noindent 
martina.zizza@mis.mpg.de}}
\date{12 July 2024}

\begin{document}

\maketitle
\begin{abstract}
  In this paper we examine the discrete Shnirelman's inequality \cite{Shnirelman}, which relates the $L^2$-distance of two discrete configurations of a fluid to the $L^1_tL^2_x$-norm of the vector field connecting them. Our proof is inspired by \cite{Shnirelman}, where it was obtained $\alpha=\frac{1}{64}$ in dimension $\nu=2$, while here we get $\alpha\geq\frac{2}{7}$. Moreover we prove that $\alpha\geq\frac{1}{\nu+1}$ for any dimension $\nu\geq 3$. We point out that, even if this does not improve the bound in the continuous version, where it was proved that $\alpha\geq\frac{2}{4+\nu}$, with $\nu\geq 3$, our bound is the best one achieved for the $2$-dimensional case.   Our method uses an alternative approach based on volume estimates of permutations, which count the number of maximum cubes that are moved by a permutation $P$. 
\end{abstract}
\medskip

\noindent Key words: fluid flows, group of volume-preserving diffeomorphisms, action functional. \\

\noindent MSC2020: 76B75.

\section{Introduction}
We consider the motion of an ideal incompressible fluid in a vessel $M$, which is a bounded region of $\R^\nu$ (with Euclidean volume element $dx$). A configuration of the fluid at time $t\in\R$ is a volume-preserving diffeomorphism $\xi_t:M\rightarrow M$: for each particle $x\in M$, $\xi_t(x)\in M$ denotes the position reached by the particle  $x\in M$ at time $t$ and $t\rightarrow \xi_t(x)$ is the \emph{trajectory} of the fluid particle. Observe that each two configurations differ by some \emph{permutation} of fluid particles. The configuration space of an ideal incompressible fluid is the set of all configurations of the fluid particles: namely, the group (under composition) of the volume-preserving diffeomorphisms, that we will denote by $\mathcal{D}(M)$ (in the literature this is also denoted by $\text{Sdiff}(M)$). We introduce a metric on the group $\mathcal{D}(M)$ as follows: we consider a piecewise smooth path $t\rightarrow \xi_t$ in $\mathcal{D}(M)$, and for $t_1\leq t\leq t_2$ we associate to the path its \emph{length}
\begin{equation}\label{def:length:functional:cont}
    \mathcal{\ell}\lbrace\xi_t\rbrace_{t_1}^{t_2}=\int_{t_1}^{t_2} \|\dot\xi_t\|_{L^2(M)}dt.
\end{equation}
For every two configurations $f,g\in\mathcal{D}(M)$ we can define a \emph{distance} between them on $\mathcal{D}(M)$ as the infimum of the lengths of all paths connecting $f,g$, that is:
\begin{equation}\label{eq:distance}
    \text{dist}_{\mathcal{D}(M)}(f,g)=\underset{ \xi_{t_1}=f,\xi_{t_2}=g}{\inf_{\lbrace \xi_t\rbrace\subset\mathcal{D}(M), }} \ell\lbrace \xi_t\rbrace_0^1.
\end{equation}
The space $\mathcal{D}(M)$ endowed with the metric $\text{dist}_{\mathcal{D}(M)}$ is a metric space. In what follows, we will study the case $M=[0,1]^\nu$ avoiding topological issues of the set $M$. 

\medskip

The first observation is that the space $\mathcal{D}(M)$ is naturally embedded in the space $L^2(M,\R^\nu)$ of vector functions on $M$. Therefore, for every two configurations $f,g\in\mathcal{D}(M)$ we can consider the distance $\|f-g\|_{L^2(M)}$ and study the comparison of the metric $\|\cdot\|_{L^2}$ with $\text{dist}_{\mathcal{D}(M)}$. It is immediate to see that
\begin{equation*}
    \|f-g\|_{L^2(M)}\leq \text{dist}_{\mathcal{D}(M)}(f,g),
\end{equation*}
therefore, the natural question arising in this context is if it possible to estimate $\text{dist}_{\mathcal{D}(M)}$ with the $L^2$-distance.
In \cite{Shnirelman2} the following theorem is proved.
\begin{theorem}[Shnirelman's inequality \cite{Shnirelman2}]\label{thm:shnirelman}
    Let us fix $\nu\geq 3$ and $M=[0,1]^\nu$, then there exists $C>0$ and $\alpha\geq \frac{2}{\nu+4}$ such that, for every $f,g\in\mathcal{D}(M)$,
    \begin{equation}\label{eq:ineq:shnirelman}
        \text{dist}_{\mathcal{D}(M)}(f,g)\leq C\|f-g\|_{L^2(M)}^\alpha.
    \end{equation}
\end{theorem}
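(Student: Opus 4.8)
The plan is to prove Shnirelman's inequality by reducing the continuous problem to a discrete combinatorial one, then solving the discrete problem using volume estimates of permutations—this is the alternative approach the abstract advertises.

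**Step 1: Discretization.**The plan is to prove the inequality by reducing the continuous geodesic problem to a combinatorial one about permutations of small cubes, bounding the cost of realizing such a permutation by an incompressible flow, and then optimizing the discretization scale against $\delta := \|f-g\|_{L^2(M)}$.

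First, I would exploit the right-invariance of the length functional \eqref{def:length:functional:cont}: since precomposition with a fixed element of $\mathcal D(M)$ preserves $\|\dot\xi_t\|_{L^2}$, we have $\mathrm{dist}_{\mathcal D(M)}(f,g)=\mathrm{dist}_{\mathcal D(M)}(\mathrm{id}, g\circ f^{-1})$, so it suffices to bound $\mathrm{dist}_{\mathcal D(M)}(\mathrm{id},g)$ in terms of $\delta$. Because both maps take values in $M=[0,1]^\nu$ we have $\delta\le\sqrt\nu$, and it is enough to treat small $\delta$ (for $\delta$ bounded below the estimate follows from the finiteness of the diameter of $\mathcal D(M)$ for $\nu\ge 3$, which itself drops out of the construction below). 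Next, fix a scale $h=1/N$ and partition $M$ into $N^\nu$ cubes of side $h$. I would replace $g$ by a permutation $P$ of these cubes (a piecewise rigid, measure-preserving map) chosen so that the $L^2$-error of the approximation, and hence the cost of connecting $g$ to $P$, is controlled by $h$ together with $\delta$. The delicate point here is that $g$ is a diffeomorphism with no a priori bound on $\|\nabla g\|_\infty$, so a naive pointwise approximation on cubes is not admissible; instead one must use only the measure-preserving property of $g$ together with a multiscale matching, so that the typical displacement induced by $P$ still reflects the $L^2$-smallness $\delta$.

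The analytic heart is a quantitative bound on $\mathrm{dist}_{\mathcal D(M)}(\mathrm{id},P)$ for a cube permutation $P$. The elementary building block is the exchange of two adjacent cubes, which can be realized by a flow supported on a set of volume $\sim h^\nu$ moving through a distance $\sim h$, at a cost $\sim h^{1+\nu/2}$; performing $m$ disjoint such exchanges in parallel costs $\sim \sqrt m\, h^{1+\nu/2}$. Routing the cubes to their targets by a sorting network of disjoint parallel exchanges already yields a bound, but to obtain the exponent $\tfrac{2}{\nu+4}$ one must transport mass over macroscopic distances far more cheaply, and this is exactly where $\nu\ge 3$ enters: a parcel is subdivided and its pieces are routed simultaneously through many disjoint tubes (a cascade, or generalized-flow, construction), which suppresses the cost of long-range exchanges. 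Summing the parallel costs over the scales of this hierarchy gives an estimate of the form $\mathrm{dist}_{\mathcal D(M)}(\mathrm{id},P)\le C\,h^{-s}\,\Phi(P)$, where $\Phi(P)$ measures the displacement effected by $P$ and $s$ depends on $\nu$.

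Finally, I would combine the two contributions — the approximation cost of passing from $g$ to $P$, which grows with $h$, and the realization cost of $P$, which grows as $h$ decreases — and optimize over $h=h(\delta)$. Each term is a power of $h$ and of $\delta$, and the optimal choice $h\sim\delta^{\beta}$ balances them, producing $\mathrm{dist}_{\mathcal D(M)}(\mathrm{id},g)\le C\,\delta^{\alpha}$ with $\alpha=\tfrac{2}{\nu+4}$. I expect the main obstacle to be the cost bound for realizing an arbitrary permutation: both the design of the long-range cascade that makes macroscopic transport affordable in dimension $\nu\ge3$, and the careful bookkeeping of the parallel $L^2$-costs across all scales that is needed to extract the sharp exponent. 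The alternative volume estimate counting the maximal cubes moved by $P$, developed in the sequel, is precisely a more efficient way to organize this bookkeeping.
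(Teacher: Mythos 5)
This statement is not proved in the paper at all: it is quoted from \cite{Shnirelman2}, and the paper explicitly records that it was obtained there \emph{via the concept of Generalized Flows} (in the spirit of Brenier), not via the cube-permutation discretization. So there is no internal proof to match your sketch against; the relevant comparison is with the method the paper attributes to \cite{Shnirelman2}, and your proposal takes a genuinely different route --- the discretize-and-route strategy of \cite{Shnirelman} and of the present paper.

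The difficulty is that this route, as far as anyone has carried it out, does not reach the exponent $\alpha=\frac{2}{\nu+4}$. Your plan defers every quantitative step: the multiscale matching producing a permutation $P$ whose displacement reflects $\delta$, the cascade construction that is supposed to make long-range transport affordable for $\nu\ge3$, and the bookkeeping yielding a bound of the form $C\,h^{-s}\Phi(P)$ are all asserted, and the final claim that optimizing $h\sim\delta^\beta$ produces exactly $\alpha=\frac{2}{\nu+4}$ is unsupported. The paper itself is evidence against it: executing precisely this strategy (with an improved length functional and explicit flows) yields only $\alpha\ge\frac{1}{\nu+1}$ for $\nu\ge3$, which is strictly smaller than $\frac{2}{\nu+4}$, and the original discrete argument of \cite{Shnirelman} gave $\frac{1}{64}$ in $\nu=2$. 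The paper also notes that while a discrete bound transfers to the continuous setting, the converse is open --- so one cannot borrow the continuous exponent to fill the gap in the discrete realization cost. To actually prove the stated theorem with $\alpha\ge\frac{2}{\nu+4}$ you would need the generalized-flow (relaxed variational) argument of \cite{Shnirelman2}, which bounds the action of a measure-valued flow interpolating between $\mathrm{id}$ and $g$ and then approximates it by genuine flows; that mechanism is absent from your outline.
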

This has been obtained via the concept of Generalized Flows (see also \cite{Brenier1}). The theorem is valid in $\nu\geq 3$, but no such inequality can be obtained in dimension $\nu=2$ due to topological issues. In particular, it can be proved that for every pair of positive constants $c, C $ there exists a diffeomorphism $g \in\mathcal D([0,1]^2)$ such that
$dist_{\mathcal D([0,1]^2)}(g,Id) > C$, but $\|g-Id\|_2\leq c$ (see as a reference \cite{Arnold:khesin}).
\medskip 

In \cite{Shnirelman} an alternative proof to Theorem \ref{thm:shnirelman} was given, in the discrete setting of permutations. Fix $N\in \N$ and consider a tiling $\mathcal{R}_N$ of the cube $[0,1]^\nu$ into $N^\nu$ cubes of the same size. The discrete approach is based onto the following well-known fact: the permutations of cubes on a tiling $\mathcal{R}_N$ of $[0,1]^\nu$ (i.e. a subdivision of $[0,1]^\nu$ into $N^\nu$ identical subcubes) approximate every element of $\mathcal{D}([0,1]^\nu)$ with respect to the $L^2$-norm (see for example \cite{Lax}).

Then Shnirelman proved that:

\begin{theorem}[Discrete Shnirelman's inequality]
    Let $\nu=2$, then there exists a positive constant $C>0$ and $\alpha\geq\frac{1}{64}$ such that, for every $N\in\N$, for every $P,Q$ permutations of the tiling $\mathcal{R}_N$, it holds
    \begin{equation}
        \text{dist}_{\mathcal{D}_N}(P,Q)\leq C\|P-Q\|_2^\alpha.
    \end{equation}
\end{theorem}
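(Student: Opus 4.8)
The plan is to exploit right-invariance to reduce to $Q=\Id$. Since every permutation of $\mathcal{R}_N$ is measure preserving, the substitution $y=Q(x)$ gives $\|P-Q\|_2=\|PQ^{-1}-\Id\|_2$, and reparametrising any path $\xi_t$ by $\xi_t\circ Q^{-1}$ (which does not change $\|\dot\xi_t\|_{L^2}$, as $Q^{-1}$ is measure preserving) gives $\text{dist}_{\mathcal{D}_N}(P,Q)=\text{dist}_{\mathcal{D}_N}(PQ^{-1},\Id)$. So, writing $R:=PQ^{-1}$ and $d:=\|R-\Id\|_2$, it suffices to bound $\text{dist}_{\mathcal{D}_N}(R,\Id)\le C d^{\alpha}$. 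I read $d$ geometrically: if $R$ sends the cube $c$ of $\mathcal{R}_N$ to $R(c)$ at (center) distance $\delta_c$, then $d^2=\sum_c\int_c|R-\Id|^2\,dx\asymp N^{-\nu}\sum_c\delta_c^2$ is the average squared displacement. In particular, by Chebyshev, for every $r>0$ the total volume of cubes displaced by at least $r$ is $\le d^2/r^2$. Since $d\le\diam M$, I split into two regimes: for $d$ bounded below, a uniform diameter bound $\text{dist}_{\mathcal{D}_N}(R,\Id)\le C$ (the full-volume case of the construction below) already gives the claim, so the genuine work is the regime $d\ll1$.

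For $d\ll1$ I decompose $R$ according to displacement scale. Grouping cubes into dyadic bands $B_j=\{c:\ \delta_c\asymp 2^{-j}\}$ and letting $V_j$ be the volume of $B_j$, the identity $d^2\asymp\sum_j 2^{-2j}V_j$ records how the small budget $d^2$ is distributed across scales; equivalently $V_j$ counts the maximal dyadic cubes at scale $2^{-j}$ that $R$ genuinely moves. I then realise $R$ as a composition of moves, one group per band, built from two incompressible primitives. The first is a parallel local rearrangement: an arbitrary permutation inside every cube of side $s$, performed simultaneously on disjoint supports, costs at most $Cs$; this follows from the scaling law that a unit-cube rearrangement of length $L_0$ rescaled to side $s$ has length $s^{1+\nu/2}L_0$, and that $s^{-\nu}$ disjoint copies add in $L^2$ to give $(s^{-\nu})^{1/2}s^{1+\nu/2}=s$ when $\nu=2$. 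The second primitive moves a set of volume $V$ by a distance $L$ by routing it through thin tubes --- in two dimensions, by genuine rotations that carry one blob around another --- at cost $\le CL\sqrt{V}$.

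Assembling these, the cost of handling band $j$ is at most $C\,2^{-j}\sqrt{V_j}$, so that $\text{dist}_{\mathcal{D}_N}(R,\Id)\le C\sum_j 2^{-j}\sqrt{V_j}$, to be estimated under the constraint $\sum_j 2^{-2j}V_j\asymp d^2$. A direct Cauchy--Schwarz here loses a factor depending on the number $\asymp\log N$ of active scales, which must be removed: I do this by performing the tube/rotation transports of the different bands simultaneously, reusing the space freed at coarser scales so that the effective routed volume does not accumulate across scales. Optimising the scale at which one stops routing and switches to the parallel local rearrangement (this happens around $2^{-j}\asymp d$, below which essentially all cubes are moved) then balances a coarse contribution against a fine contribution and produces a clean power $\text{dist}_{\mathcal{D}_N}(R,\Id)\le C d^{\alpha}$. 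Keeping track only of the crude constants available from the elementary rotation construction yields the stated exponent $\alpha\ge\frac1{64}$.

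The main obstacle is precisely this simultaneous-routing step. In two dimensions there is no spare direction in which to slide blobs past one another, so each long-range move must be realised by a rotation, and many such rotations have to be carried out at once without their tubes colliding and without the total $L^2_tL^2_x$ cost blowing up; it is the need to recycle space between scales, rather than to pay $\sqrt{V_j}\asymp1$ on each of the $\asymp\log N$ fine scales, that makes the bound uniform in $N$. Controlling this bookkeeping is exactly where the volume counts $V_j$ of maximal moved cubes enter decisively, and it is the place where a sharper, non-wasteful rotation scheme improves the exponent --- to $\frac27$ in the plane and $\frac1{\nu+1}$ in higher dimensions.
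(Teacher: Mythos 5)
Your reduction to $Q=\Id$ and the Chebyshev-type reading of $\|R-\Id\|_2^2$ as a constraint on the volumes $V_j$ of cubes displaced at each dyadic scale are sound and do match the opening moves of the paper's argument (Remark \ref{rmk:cound:eps}, Proposition \ref{prop:Step1}). But the core of your proof does not close, for two concrete reasons. First, the claimed per-band cost $C\,2^{-j}\sqrt{V_j}$ is both unjustified and provably too strong: under the constraint $\sum_j 2^{-2j}V_j\asymp d^2$ one has $2^{-j}\sqrt{V_j}\le d$ for every $j$, so if your estimate held and the logarithmic loss over scales could be removed (which you yourself flag as the unresolved main obstacle), you would obtain $\alpha=1$ --- the sharp inequality, which the paper states only as Conjecture \ref{conj:sharp:shnirelman} and which even the improved argument of the paper does not approach (it gets $2/7$ in the plane). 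The error is that $L\sqrt{V}$ is the cost of \emph{transporting} a marked set of volume $V$ a distance $L$ (a coloring problem, Theorem \ref{thm:cost:nucube} and Corollary \ref{cor:cost:coloring:rectangles}); it is not the cost of \emph{realizing an arbitrary permutation} whose displacements are $\le L$ on a support of volume $V$, because after transport the cubes must still be sorted into their exact target positions, and that local sorting costs $\asymp L$ essentially independently of $V$ (this is the paper's Step 3, Theorem \ref{thm:step:3}, and it is why the final optimization balances $\delta^{\epsilon}$ against the transport terms rather than getting $\delta$ for free).

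Second, the permutation $R$ does not decompose into independent sub-permutations, one per band: a cube displaced by $\asymp 2^{-j}$ typically lands on a site occupied by a cube from a different band, so ``handling band $j$'' forces you to move cubes you did not intend to move, which feeds back into both the $L^2$ norm and the pointwise displacements of the remaining cubes. Controlling this feedback is precisely the content of the paper's Proposition \ref{prop:Step1} (collect the far-moving cubes while guaranteeing the others move at most $O(\delta^{\epsilon})$ and the $L^2$ norm grows only to $\delta^{1-\epsilon/2}$) and of Theorem \ref{thm:problem:step:2}, whose orbit-counting argument (Lemma \ref{lem:cost:trajectories}) and volume constraint (Assumption A of Corollary \ref{cor:cost:distant:cubes}) are what actually determine the exponent. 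Your sketch replaces all of this with the assertion that the tubes can be routed simultaneously ``reusing the space freed at coarser scales,'' which is exactly the step that needs a construction; and no computation in your argument produces $\tfrac{1}{64}$ or any other explicit value of $\alpha$.
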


We will see in a while that $\text{dist}_{\mathcal{D}_N}$ is the discrete version of \eqref{eq:distance}, and it is related to the swap distance of permutations \cite{DistPermutations}. In the present paper we will focus on the discrete Shnirelman inequality, improving the bound in dimension $\nu=2$ and obtaining the bound for any dimension. Our result reads as follows:
\begin{theorem}[Improved discrete Shnirelman's inequality]\label{thm:ineq:discrete}
    Let $\nu\geq 3$, then there exists a positive constant $C=C(\nu)>0$, and $\alpha\geq \frac{1}{1+\nu}$ such that, for every $N\in\N$, for every $P,Q$ permutations of some tiling $\mathcal{R}_N$ of $[0,1]^\nu$, it holds
    \begin{equation}
        \text{dist}_{\mathcal{D}_N}(P,Q)\leq C\|P-Q\|_2^\alpha.
    \end{equation}
    Moreover, if $\nu=2$, $\alpha$ can be chosen to be $\alpha\geq\frac{2}{7}.$
\end{theorem}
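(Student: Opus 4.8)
The plan is to reduce everything to the case $Q=\mathrm{Id}$ and then to realize $P$ explicitly by an incompressible motion whose action I control scale by scale. First I would observe that both sides of the inequality are invariant under right composition by a fixed permutation: the length functional \eqref{def:length:functional:cont} is unchanged under $\xi_t\mapsto \xi_t\circ\psi$ since $\|\dot\xi_t\circ\psi\|_{L^2}=\|\dot\xi_t\|_{L^2}$, whence $\text{dist}_{\mathcal D_N}(P,Q)=\text{dist}_{\mathcal D_N}(PQ^{-1},\mathrm{Id})$, while a volume-preserving change of variables gives $\|P-Q\|_2=\|PQ^{-1}-\mathrm{Id}\|_2$. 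So it suffices to bound $\text{dist}_{\mathcal D_N}(P,\mathrm{Id})$ by $C\|P-\mathrm{Id}\|_2^{\alpha}$. Writing $D:=\|P-\mathrm{Id}\|_2$, the quantity $D^2$ is, up to the fixed normalization of the tiling, the mean squared displacement $\sum_{c\in\mathcal R_N}|P(c)-c|^2$ of the cubes.

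Next I would fix a dyadic family of coarsenings of $\mathcal R_N$ and decompose $P$ as a composition $P=P_K\circ\cdots\circ P_1$, where the factor $P_k$ only rearranges the sub-cubes inside each maximal cube of side $2^{-k}$ that has already been placed correctly by $P_1,\dots,P_{k-1}$. The cost of the resulting path is then at most $\sum_k \text{dist}_{\mathcal D_N}(P_k,\mathrm{Id})$, and the whole point is to bound each term by a \emph{volume estimate}: if $n_k$ denotes the number of maximal cubes of side $2^{-k}$ actually moved by $P$, then the action needed to perform $P_k$ should be controlled by $n_k$ and the scale $2^{-k}$ alone. This is the place where the connecting flow must be built by hand, and where I expect the genuine difficulty to lie: inside the fully packed cube there is no free room, so exchanging two blocks forces one to route them past one another through the remaining directions, and the cost of this routing is exactly what produces the dimension-dependent exponent.

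With such a per-scale estimate in hand I would bound the counts $n_k$ in terms of $D$ by a Chebyshev-type inequality. A cube displaced by a distance $d$ contributes of order $d^2$ to $D^2$ and is counted as moved only at scales coarser than $d$, so the volume moved at scale $2^{-k}$ is at most $\min\!\big(1,\,CD^2\,2^{2k}\big)$. Summing the per-scale costs and choosing the crossover scale to be of order $D^{\theta}$ balances the coarse contribution, which grows as the partition is refined, against the fine microscopic contribution, which decays; optimizing $\theta$ is what yields the exponent $\alpha=\frac{1}{\nu+1}$.

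The main obstacle is precisely the per-scale volume estimate, namely quantifying how expensive it is to realize a block permutation when no void is available. Here one must exhibit an incompressible flow that sorts the moved blocks while keeping the integrated $L^2$-norm small, and track carefully how elementary block exchanges can be parallelized without their routes colliding; the dimension enters through the number $\nu-1$ of transverse directions available for passing blocks. The case $\nu=2$ then has to be handled separately: with a single transverse direction the blocks cannot simply slide past one another, so each exchange must be realized through a winding motion whose extra cost degrades the exponent from the formal value $\tfrac13$ down to $\tfrac27$. Carrying out this two-dimensional routing estimate is the second point at which real work is required.
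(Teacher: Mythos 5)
Your reduction to $Q=\mathrm{Id}$ is correct and matches the paper, but beyond that the proposal is a strategy outline with the decisive step missing. You yourself flag the per-scale estimate --- ``the action needed to perform $P_k$ should be controlled by $n_k$ and the scale $2^{-k}$ alone'' --- as the place ``where the genuine difficulty lies,'' and indeed that is where the entire content of the theorem sits: without an explicit construction of the sorting flow inside each coarse cell and a proof of its cost, no exponent can be extracted. Moreover, your Chebyshev bound $n_k\lesssim\min(1,CD^2 2^{2k})$ only controls cubes displaced by \emph{more} than $2^{-k}$; the expensive population consists of cubes displaced by \emph{less} than the coarse scale that nevertheless cross a cell boundary and must still be re-sorted. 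Bounding those is the hard part, and in the paper it requires following the orbits $\{P^n(\kappa)\}$ of boundary-crossing cubes and applying Jensen's inequality three times (Lemma \ref{lem:cost:trajectories}, Lemma \ref{lem:cost:Step:2}); a pointwise displacement count does not see them at all.

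The claimed provenance of the exponents also does not correspond to any computation you carry out, and it disagrees with where they actually come from. In the paper there is no dyadic hierarchy: a single intermediate scale $\delta^{\epsilon}$ is used, the total cost is $\max\{\delta^{1-\epsilon},\delta^{\frac12-\frac{3\epsilon}{4}},\delta^{\frac16+\frac{7\epsilon}{12}},\delta^{\epsilon}\}$, and the bound $\alpha\geq\frac{1}{\nu+1}$ for $\nu\geq3$ arises from the \emph{constraint} $\epsilon\leq\frac{1}{\nu+1}$ of Assumption A in Corollary \ref{cor:cost:distant:cubes} (the moved volume per stripe, $\delta^{1-\epsilon}$, must fit inside one coarse cube of volume $\delta^{\epsilon\nu}$), not from optimizing a sum over scales. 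Likewise $\alpha=\frac27$ for $\nu=2$ comes from balancing $\delta^{\frac12-\frac{3\epsilon}{4}}$ against $\delta^{\epsilon}$ once that constraint relaxes to $\epsilon\leq\frac13$; it has nothing to do with a two-dimensional ``winding'' penalty degrading a formal value $\frac13$ --- in fact all the paper's constructions are written in $\nu=2$ and extended upward, and the array-based length functional already makes long-range swaps cheap in every dimension. As it stands, the proposal asserts the two exponents rather than deriving them, so the argument is incomplete at precisely the points that matter.
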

We point out that, while a good bound of $\alpha$ in the discrete setting implies the same bound in the continuous one (for dimension greater than $3$, see also \cite{Arnold:khesin}), we do not know if the converse holds true, thus the role of the present paper is to approach the bound of the continuous case in the discrete context. Moreover, the discretized setting has an independent interest, since it is related to the theory of Cayley Graphs associated to transposition trees \cite{DohanKim}.

\subsection{The discrete setting}
Here we introduce the discretized setting and the notion of discrete flows, first in the formulation of \cite{Shnirelman}.
We denote by $\mathcal{D}_N$ the group of permutations of the tiling $\mathcal{R}_N$. We present Shnirelman's notion of elementary movements (that we will call here $S$-elementary movements) together with the definition of the discrete length functional, then we will point out the difference with our approach. 

\begin{definition}[Swap]
\label{def:swap}
    Let $\kappa_1,\kappa_2\in R_N$ be two adjacent subcubes of the tiling $\mathcal{R}_N$ and denote by $x_1=c(\kappa_1),x_2=c(\kappa_2)$ their respective centers. A swap between $\kappa_1,\kappa_2$ is a map $T:[0,1]^\nu\rightarrow [0,1]^\nu$ defined as follows:
    \begin{equation}\label{eq:swap}
        T(x)=\begin{cases}
              x+x_2-x_1 \quad\text{ if } x\in \text{int}(\kappa_1), \\ 
              x+x_1-x_2 \quad\text{ if } x\in \text{int}(\kappa_2), \\
              x \quad\text{otherwise}.
        \end{cases}
    \end{equation}
\end{definition}
See Figure \ref{fig:trasp}.
\begin{definition}[S-elementary movement]\label{def:S:movement}
    A S-elementary movement is a measure-preserving map $S:[0,1]^\nu\rightarrow[0,1]^\nu$ with the following property: there exist $T_1,\dots,T_j$ swaps, respectively of the  couples of adjacent subcubes $(\kappa^1_1,\kappa^1_2),\dots,(\kappa^j_1,\kappa^j_2)$, with 
    \begin{equation*}
        \text{int}(\kappa^h_1\cup \kappa^h_2)\cap\text{int}(\kappa^k_1\cup \kappa^k_2)=\emptyset, \quad h\not =k,
    \end{equation*}
    and 
    \begin{equation*}
        S=T_j\circ T_{j-1}\circ\dots T_2\circ T_1.
    \end{equation*}
   
\end{definition}
In particular, a $S$-elementary movement is a map that swaps simultaneously couples of adjacent cubes.  Given a $S$-elementary movement with $S=T_j\circ\dots T_1$, we denote the number of simultaneous swaps of $S$ (that is, the number of couples that are swapped simultaneously) by $\text{swap}(S)=j$.

\medskip

 Here, a \emph{$S$-discrete flow}  (sometimes they are called discrete flows in literature, but we will use this notation to distinguish them from the ones we will use later) is a finite sequence $\lbrace S_k\rbrace _{k=1}^\mathcal{T} $ with $S_k$ $S$-elementary movement for every $k=1,2,\dots,\mathcal{T}$. Let $P,Q:[0,1]^\nu\rightarrow [0,1]^\nu$ be two permutations (Definition \ref{def:permutation:cubes}).
  We say that $P$ is sent into $Q$ by the discrete flow $\lbrace S_k\rbrace_{k=1}^\mathcal{T}$ if
\begin{equation*}
    S_\mathcal{T}\circ S_{\mathcal{T}-1}\circ \dots S_2 \circ S_1\circ P=Q.
\end{equation*}
In this case we say that the discrete flow $\lbrace S_j\rbrace_{j=1}^\mathcal{T}$ \emph{connects} the two permutations $P,Q$. 
In particular, the \emph{discretised length functional} (which is the discrete counterpart of \eqref{def:length:functional:cont}), that quantifies the cost for connecting $P$ to $Q$, is
\begin{equation}\label{eq:discr:length}
    L_S(P,Q)=L_S\lbrace{S_j}\rbrace_{j=1}^\mathcal{T}=\sum_{j=1}^{\mathcal{T}} N^{-1-\frac{\nu}{2}}\sqrt{\text{swap}(S_j)}.
\end{equation}
Finally, we define the \emph{discrete distance} on the space $\mathcal{D}_N$ as
\begin{equation}
    \text{dist}^S_{\mathcal{D}_N}(P,Q)=\underset{ \text{connecting } P\text{ to }Q}{\min_{\lbrace S_j\rbrace_{j=1}^{\mathcal{T}} }} L_S\lbrace S_j\rbrace_{j=1}^\mathcal{T}.
\end{equation}

The reason for such definition of the length is transparent: a vector field that swaps two adjacent cubes has magnitude $\sim N^{-1}$ (sidelength of the cubes) and it is supported on a volume $\sim N^{-\nu}$ (see for more details Subsection \ref{SS:space:discrete:conf}). 

Our strategy proposes the definition of a new length functional, together with new elementary movements and discrete flow. We try to explain the heuristics of our approach and to show that the new distance function we obtain is equivalent to the one chosen in \cite{Shnirelman}, nonetheless it allows for more straightforward computations and concrete improvements in the inequality \eqref{eq:ineq:shnirelman}, at least in the discretized context. We remark that, for the sake of brevity, we are not putting the definitions of all objects in the introduction, since they can be found in Section \ref{S:preliminaries}.

\begin{definition}\label{def:Exchanging:couple}
   Let $A=[\kappa_1 \kappa_2 \dots \kappa_{\ell(A)}]\subset\mathcal{R}_N$ be an array of length $\ell(A)$, and let $P\in\mathcal{D}_N$ be a permutation of the array $A$. Let $\kappa_{i_1}$, $\kappa_{j_1}$, with $j_1\leq i_1\leq \ell(A)$, be two subcubes of the array. We say that $(\kappa_{i_1},\kappa_{j_1})$ is a \emph{swapping couple}  for the permutation $P$ if
   \begin{equation*}
       P(x)=\begin{cases}
           x+c(\kappa_{i_1})-c(\kappa_{j_1}), \quad x\in \text{int}(\kappa_{j_1}), \\
           x+c(\kappa_{j_1})-c(\kappa_{i_1}), \quad x\in \text{int}(\kappa_{i_1}),\\
           x \quad\text{otherwise},
       \end{cases}
   \end{equation*}
   where $c(\kappa)$ denotes the center of the cube $\kappa$.
\end{definition}
Here we point out that an array $A$ is a collection of $\ell(A)$ cubes along a line. 
The notion of swapping couples extends the notion of adjacent cubes, so that we can consider swaps of distant cubes (in this case, the permutation $P$ of the previous definition).
\begin{definition}\label{def:sequence:swapping:couple}
    Let $A=[\kappa_1 \kappa_2 \dots \kappa_{\ell(A)}]\subset\mathcal{R}_N$ be an array of length $\ell(A)$. A \emph{sequence of swapping couples} in $A$ is a subset $\lbrace \kappa_{i_j}\rbrace\subset A$ with $j=1,\dots,2M\leq\ell(A)$, and
    \begin{equation*}
    i_1\leq i_2\leq\dots  \leq i_{2M},
    \end{equation*}
    \begin{equation*}
(\kappa_{i_j},\kappa_{i_{2M-j+1}})\qquad\text{swapping couple},\quad\forall j=1,\dots,M.
    \end{equation*}
    
\end{definition}

\begin{definition}[Elementary Movement]\label{def:new:elem:movement}
    An elementary movement is a map $Z:[0,1]^\nu\rightarrow[0,1]^\nu$ such that there exist $A_1,\dots,A_m$ disjoint arrays with the following properties:
    for every $1\leq i\leq m$, if $(\kappa_{i_1},\kappa_{j_1}),\dots,(\kappa_{i_{h(i)}},\kappa_{j_{h(i)}})$ denote a sequence of swapping couples of the array $A_i$ (Definition \ref{def:sequence:swapping:couple}), then 
    \begin{equation*}
        Z(x)=\begin{cases}
            x+ c(\kappa_{i_\ell})-c(\kappa_{j_\ell}),\quad \forall x\in \text{int}(\kappa_{j_\ell}),\quad\forall i,\quad \forall 1\leq\ell\leq h(i), \\
              x+ c(\kappa_{j_\ell})-c(\kappa_{i_\ell}),\quad \forall x\in \text{int}(\kappa_{i_\ell}),\quad\forall i,\quad \forall 1\leq\ell\leq h(i),\\
              x\quad\text{otherwise},
        \end{cases}
    \end{equation*}
    where $c(\kappa)$ denotes the center of the cube $\kappa$.
\end{definition}
If $Z$ is an elementary movement and $A_1,\dots,A_m$ are the arrays of the previous definition, then its cost $L:\mathcal Z_N\rightarrow\R_+$ can be computed as follows:
   \begin{equation}\label{eq:cost:elem:movem}
       L(Z)= \max_i \ell(A_i)N^{-1-\frac{\nu}{2}}\sqrt{\sum_{i=1}^m h(i)},
   \end{equation}
   where we recall that $\ell(A_i)$ is the length of the array $A_i$, for every $i$.

Finally, one proceeds as before in the definition of the discrete distance. In Subsection \ref{SS:space:discrete:conf} we prove that the two distances are equivalent. The advantage of this definition is that this distance quantifies that the real cost depends only on the amount of couples that have to be swapped, times the size of the array. The cubes that should not be swapped do not count. In some sense this functional quantifies the fact that a cube can travel along the array without affecting the other cubes: see Figure \ref{fig:lemmaS}.

\begin{figure}
    \centering
    \includegraphics[scale=0.65]{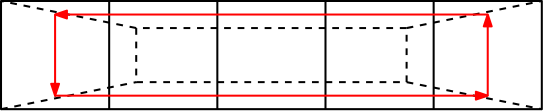}
    \caption{A particle follows the red trajectory to reach the distant cube, travelling along pipes, and its flow does not affect all the intermediate cubes.}
    \label{fig:lemmaS}
\end{figure}
This  observation allows to implement a different scheme in order to quantify the cost functional. We point out that the discrete flow we propose here has also a velocity field counterpart, and its construction is given in the appendix for completeness, at least for a single swapping couple, but it is not relevant for the present work.


\medskip

 With this tool at hand, we take inspiration from the proof of \cite{Shnirelman}. 
 Clearly, since our method is based on the quantification of cubes that need to be swapped at each time step, our proof is based on volume estimates (under the constraint of the $L^2$-norm). 
 
\subsubsection{Proof of discrete Shnirelman's inequality.}
 
         Here we give a brief summary of the proof of the improved discrete Shnirelman's  inequality. The proof is based onto three steps (Proposition \ref{prop:Step1}, Theorem \ref{thm:problem:step:2}, Theorem \ref{thm:step:3}). Our proof is inspired by the one in \cite{Shnirelman}, but proposes alternative versions of the steps (the main differences are in Theorem \ref{thm:problem:step:2}) where a different analysis was required) that fit with the new definition of the discrete length functional. Moreover, all our constructions are explicit.

         \medskip 
         Given a permutation $P$ of the tiling $\mathcal{R}_N$ of the $\nu$-dimensional unit cube, we assume that $P$ is \emph{$\delta$-close} to $Id$ in the $L^2$-norm, namely $\|P-Id\|_2\leq \delta$, where $\delta<<1$. Fix $0<\epsilon\leq\frac{2}{2+\nu}$ (it will be clear in Section \ref{S:improved:shnirelman} why we ask for such a condition). 
         \begin{itemize}
             \item[\textbf{Step 1)}] Call white those cubes such that $|P(\kappa)-\kappa|\leq\delta^\epsilon$, and black the other ones. Since $\|P-Id\|_2\leq\delta$, we can quantify the maximum amount of black cubes (first volume estimate). Then we  perform a flow that moves the black cubes to their original position. As a result we get

             \begin{proposition}\label{prop:Step1}
    Let $P\in\mathcal{D}_N$ chosen as above. Then there exist two positive constants $C=C(\nu)>0, D=D(\nu)>0$ and a permutation $\tilde P\in\mathcal{D}_N$ such that:
    \begin{itemize}
        \item $\|\tilde P-Id\|_2\leq C\delta^{1-\frac{\epsilon}{2}}$;
        \item for every $\kappa\in \mathcal{R}_N$, $|\tilde P(\kappa)-\kappa|\leq D{\delta}^{\epsilon}$.
    \end{itemize} 
    Moreover, the cost for connecting $P$ to $\tilde P$ is estimated by
    \begin{equation}\label{eq:cost:step:1}
        L(P,\tilde P)\lesssim \delta^{1-\epsilon}.
    \end{equation}
\end{proposition}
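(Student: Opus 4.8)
The plan is to convert the $L^2$-hypothesis into a combinatorial count of black cubes, then to route the black cubes into their home regions by a number of elementary movements depending only on $\nu$, and finally to read off both the cost bound \eqref{eq:cost:step:1} and the two properties of $\tilde P$ from the combinatorics of this routing. Since $P$ acts rigidly on each cube, on $\text{int}(\kappa)$ the displacement $P(x)-x$ is the constant vector $c(P(\kappa))-c(\kappa)$; writing $|P(\kappa)-\kappa|$ for its length,
\begin{equation*}
\|P-Id\|_2^2=N^{-\nu}\sum_{\kappa\in\mathcal R_N}|P(\kappa)-\kappa|^2\le\delta^2 .
\end{equation*}
Every black cube contributes at least $N^{-\nu}\delta^{2\epsilon}$ to this sum, so the number $B$ of black cubes obeys $B\le\delta^{2-2\epsilon}N^\nu$. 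This is the only quantitative use of the hypothesis.

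I would sort the black cubes into their home regions by sweeping the coordinate directions one at a time. Thinking of $[0,1]^\nu$ as partitioned into congruent boxes of side comparable to $\delta^\epsilon$, a cube lies in its home box precisely when its displacement is $\lesssim\delta^\epsilon$, so only the $B$ black cubes are misplaced. In the $d$-th sweep I correct the $d$-th coordinate of every black cube: the admissible arrays are the lines of $\mathcal R_N$ parallel to the $d$-th axis, which are pairwise disjoint, and inside each such line I relocate its black cubes to the correct $d$-slab by a single sequence of swapping couples (Definition \ref{def:sequence:swapping:couple}), pairing each black cube with a companion belonging to the slab it vacates, so that each swapping couple places both of its cubes into their correct $d$-slab at once. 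Hence each sweep is a single elementary movement $Z_d$, and the composition $\tilde P:=Z_\nu\circ\cdots\circ Z_1\circ P$ has every cube inside its home box, i.e. $|\tilde P(\kappa)-\kappa|\le D\delta^\epsilon$ with $D=\sqrt\nu$.

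For the cost, in $Z_d$ the arrays have length $\ell(A_i)\le N$ and $\sum_i h(i)$ equals the number of black cubes moved in that sweep, which is $\le B$. Therefore, by \eqref{eq:cost:elem:movem},
\begin{equation*}
L(Z_d)\le N\cdot N^{-1-\frac{\nu}{2}}\Big(\sum_i h(i)\Big)^{1/2}\le N^{-\frac{\nu}{2}}\sqrt B\le N^{-\frac{\nu}{2}}\big(\delta^{2-2\epsilon}N^\nu\big)^{1/2}=\delta^{1-\epsilon},
\end{equation*}
and summing the $\nu$ sweeps gives $L(P,\tilde P)\le\nu\,\delta^{1-\epsilon}\lesssim\delta^{1-\epsilon}$, which is \eqref{eq:cost:step:1}. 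For the $L^2$-bound I split $\sum_\kappa|\tilde P(\kappa)-\kappa|^2$ into three groups. Cubes never moved keep their original displacement, contributing at most $\sum_\kappa|P(\kappa)-\kappa|^2\le\delta^2N^\nu$. Cubes ending in their home box (the $B$ black cubes and their companions, $O(B)$ in total) are displaced by $\le D\delta^\epsilon$, contributing $\lesssim B\delta^{2\epsilon}\le\delta^{2-2\epsilon}N^\nu\cdot\delta^{2\epsilon}=\delta^2N^\nu$. Any further disturbed cubes are again displaced by $\le D\delta^\epsilon$, while their total displacement mass is controlled by the total distance travelled by the black cubes, which is $O(B)$ in physical units; by Cauchy--Schwarz their contribution is $\lesssim\delta^\epsilon\cdot B\lesssim\delta^{2-\epsilon}N^\nu$. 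Dividing by $N^\nu$ yields $\|\tilde P-Id\|_2^2\lesssim\delta^{2-\epsilon}$, i.e. $\|\tilde P-Id\|_2\le C\delta^{1-\frac{\epsilon}{2}}$.

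The delicate point — and the main obstacle — is the within-line relocation: I must realize the required repositioning of the black cubes of each line by a \emph{single} sequence of swapping couples (so that every sweep is one elementary movement and the number of movements stays bounded by $\nu$, which is what keeps the cost at $\delta^{1-\epsilon}$ rather than picking up extra factors), while choosing the companions so that no genuinely well-placed cube is pushed out of its home box. Controlling this congestion — equivalently, bounding the number of swapping couples through any fixed cube and the resulting displacement by $D\delta^\epsilon$ — is where the sparsity estimate $B\le\delta^{2-2\epsilon}N^\nu$ and the hypothesis $\epsilon\le\frac{2}{2+\nu}$ must be used, and it is the step where the construction departs most from the corresponding argument in \cite{Shnirelman}.
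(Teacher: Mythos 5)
Your opening volume estimate ($B\le\delta^{2-2\epsilon}N^\nu$ black cubes) and your cost arithmetic are exactly the paper's, and it is a correct observation that $\nu$ movements with arrays of length $N$ price out the same as $\delta^{-\epsilon}$ movements with arrays of length $\delta^\epsilon N$. But the step you yourself flag as ``the delicate point'' is a genuine gap, not a technicality, and it is precisely the step the paper's construction is organized around. Two things go wrong with realizing a coordinate sweep as a single elementary movement whose swapping couples place both members correctly. First, by Definition \ref{def:sequence:swapping:couple} a sequence of swapping couples inside one array must be nested: $i_1\le\dots\le i_{2M}$ with $\kappa_{i_j}$ paired to $\kappa_{i_{2M-j+1}}$. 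Two black cubes at positions $p_1<p_2$ in the same line with order-preserving targets $q_1<q_2$ give crossing, not nested, pairs, so they cannot belong to one sequence; a single elementary movement per line cannot realize a generic relocation. Second, even allowing several movements, a swapping couple places \emph{both} cubes in their correct $d$-slab only if the companion is itself a misplaced cube whose target slab contains the vacated position, i.e.\ only if the slab-level displacement pattern decomposes into transpositions. A $3$-cycle at the slab level (cube $a$: slab $1\to3$, cube $b$: $3\to5$, cube $c$: $5\to1$) admits no such pairing, and falling back on white companions moves them by up to the full line length $O(1)$, which destroys both the bound $|\tilde P(\kappa)-\kappa|\le D\delta^\epsilon$ and your third-group $L^2$ accounting (which silently assumes every disturbed cube moves at most $D\delta^\epsilon$).

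The paper resolves exactly this congestion problem by giving up on $O(\nu)$ movements: colored cubes are relayed through \emph{adjacent} cubes of the coarse tiling $\mathcal{R}_{\delta^{-\epsilon}}$ over $\sim\delta^{-\epsilon}$ elementary movements (after a preliminary rearrangement, condition \eqref{eq:inter:ins}, ensuring the colored sets in consecutive coarse cubes do not collide). Each movement swaps colored cubes only with whites one coarse cube away, so every disturbed white moves at most $2\delta^\epsilon$ and is never touched again in that pass; each movement costs $\lesssim\delta^\epsilon N\cdot N^{-1-\nu/2}\sqrt{N(C)}\lesssim\delta$, giving total cost $\delta^{1-\epsilon}$, and the $\sim\delta^{-\epsilon}N(C)$ disturbed whites contribute $\delta^{-\epsilon}N(C)N^{-\nu}\delta^{2\epsilon}\le\delta^{2-\epsilon}$ to $\|\tilde P-Id\|_2^2$. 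To repair your argument you would need either to prove a routing lemma showing each sweep decomposes into $O(1)$ elementary movements with all companions displaced by $\lesssim\delta^\epsilon$ (which the cycle obstruction above rules out in general), or to adopt the paper's relay scheme.
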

 We point out that the cost is given up to a positive constant depending only on the dimension $\nu$.  Moreover, we have that it is possible to give a concrete estimate of the $L^2$-norm of the new permutation $\tilde P$, since at each step we can estimate the number of white cubes that are affected by the motion of black cubes. We point out one weakness of this method: the $L^2$-norm of the new permutation might be increased, while we should expect that if we move back the cubes to their original position the $L^2$-norm would decrease.

             \item[\textbf{Step 2)}]In the previous step we proved that, up to some positive constant, it holds $|P(\kappa)-\kappa|\leq\delta^\epsilon$. We need to prove that it is possible to find a permutation $\tilde P$, close to our previous permutation, such that it is constant on the tiling $\mathcal{R}_{\delta^\epsilon}$, in the sense that every subcube $\kappa$ contained in a cube $K$ of the tiling $\mathcal{R}_{\delta^\epsilon}$ is sent to the same subcube $K$. \medskip

             This is the more expensive step in terms of the cost, and it requires a different analysis from the one proposed in \cite{Shnirelman}, where it is not given an explicit construction of the flow. One of the main ideas here is that, in order to achieve a good volume estimate, we should analyze the number of long trajectories of cubes, thus we propose a counting argument based on trajectories. This is the content of Section \ref{sss:step:2}.

             Our result is the following:
             \begin{theorem}\label{thm:problem:step:2}
    Let $P\in\mathcal{D}_N$ such that $\|P-Id\|_2\leq\delta^{1-\frac{\epsilon}{2}}$, and that $|P(\kappa)-\kappa|\leq \delta^\epsilon$ for some $\epsilon\in(0,\frac{2}{2+\nu})$, for all $\kappa\subset\mathcal{R}_N$. Then there exists a  permutation $\tilde P\in\mathcal{D}_N$ constant on the tiling $\mathcal{R}_{\delta^{\epsilon}}$ such that 
    \begin{equation}
        \label{eq:cost:step:2}
        L(P,\tilde P)\lesssim\max\lbrace \delta^{\frac{1}{2}-\frac{3\epsilon}{4}},\delta^{\frac{1}{6}+\epsilon\frac{7}{12}}\rbrace.
    \end{equation}
\end{theorem}
We are strongly convinced that this step could be performed at an inferior cost, but the main obstacle to our proof, for which we aimed to reach the bound $\alpha\geq\frac{2}{2+\nu}$ in Theorem \ref{thm:shn:discr}, is in the Assumption A of Corollary \ref{cor:cost:distant:cubes}. Indeed, in order to guarantee the constraint $|P(\kappa)-\kappa|\leq\delta^\epsilon$, we needed to assume that the volume of moving cubes was small.

              \item[\textbf{Step 3) }] The third step reads as follows.

              \begin{theorem}\label{thm:step:3}
    Let $P\in\mathcal{D}_N$ be a permutation constant the tiling $\mathcal{R}_{\delta^\epsilon}$.  Then there exists a positive constant $C=C(\nu)>0$ such that $\text{dist}(P,Id)\leq C(\nu)\delta^{\epsilon}.$
\end{theorem}
 We point out that this Theorem cannot be optimal, since the proof relies on the assumption that in each cube of the tiling $\mathcal{R}_{\delta^\epsilon}$ we are performing a permutation with maximum cost, ignoring completely the bound $\|P-Id\|_{2}$. 

         \end{itemize}
        
Finally, the proof of the main theorem follows by the previous three steps.

         \subsubsection{Comments on the proof.}
         To summarize, we do not expect this proof to be sharp. As underlined, one major obstruction to improved estimates (which could be also relevant for the continuous case) was the volume constraint of Assumption A (see Corollary \ref{cor:cost:distant:cubes}). We conjecture that it should be possible to get the bound 
         \begin{equation*}
             \alpha\geq\frac{2}{2+\nu},
         \end{equation*}
       at least for $\nu>>1$ in the discretized context. Since it is clear from the discrete construction that if a cube reach its original position $P(\kappa)$ the $L^2$-distance with the identity should decrease instead of increasing, we conjecture that in the continuous case it should be possible to reach the $\alpha=1$ exponent. In particular we expect that:
\begin{conjecture}[Sharp Shnirelman's inequality]\label{conj:sharp:shnirelman}
     Let $\nu\geq 3$, then
    \begin{equation*}
        \text{dist}_{\mathcal{D}([0,1]^\nu)}(f,g)\leq C\|f-g\|_{L^2([0,1]^\nu)},
    \end{equation*}
    for every $f,g\in\mathcal{D}([0,1]^\nu)$ volume-preserving diffeomorphisms. 
\end{conjecture}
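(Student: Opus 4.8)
The plan is to reduce the global statement to a one-sided Lipschitz bound near the identity and then to construct, for such near-identity maps, an explicit incompressible path of length $O(\|h-\Id\|_2)$. First I would use the right-invariance of the metric: if $\xi_t$ connects $f$ to $g$ then $\xi_t\circ f^{-1}$ connects $\Id$ to $g\circ f^{-1}$ with the same length (since $f^{-1}$ is measure-preserving, $\|v\circ f^{-1}\|_{L^2}=\|v\|_{L^2}$), and likewise $\|g\circ f^{-1}-\Id\|_{L^2}=\|f-g\|_{L^2}$. Hence it suffices to prove $\text{dist}_{\mathcal{D}([0,1]^\nu)}(\Id,h)\le C\|h-\Id\|_2$ for every $h$. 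When $\delta:=\|h-\Id\|_2$ is bounded below by a fixed threshold $\delta_0$, Theorem~\ref{thm:shnirelman} already gives $\text{dist}(\Id,h)\le C\delta_0^{\alpha-1}\cdot\delta$, because the diameter of $\mathcal{D}([0,1]^\nu)$ is finite; so the entire content lies in the regime $\delta\to 0$.

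The decisive structural observation is that the \emph{straight-line homotopy} $\xi_t(x)=(1-t)\,x+t\,h(x)$ has length exactly $\int_0^1\|h-\Id\|_2\,dt=\delta$, which is already the target bound. Its only defect is that it is \emph{not} volume-preserving at intermediate times, so it does not live in $\mathcal{D}([0,1]^\nu)$. The whole problem therefore becomes: \emph{correct the incompressibility violation of the straight line while paying only an extra $O(\delta)$ in length.} Writing $u=h-\Id$ and $D\xi_t=I+t\,Du$, the Jacobian satisfies $\det D\xi_t=1$ at $t=0,1$, and since $\det(I+Du)=1$ its deviation from $1$ in between is of second order in $Du$. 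I would quantify this deviation and remove it by a Moser/Hodge correction: at each time split the Eulerian velocity into a divergence-free part and a gradient part, and cancel the gradient part by a measure-preserving rearrangement, estimating its $L^2$-cost by the size of $\dive$ of the straight-line velocity.

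To make the correction affordable I would decompose $u$ by a dyadic (Littlewood--Paley) partition $u=\sum_j u_j$, with $u_j$ concentrated at spatial scale $2^{-j}$, and correct each scale by \emph{local} measure-preserving rearrangements supported on cubes of that scale, carried out \emph{in parallel} so that the spatial $L^2$-norms add in squares, $\|\dot\xi_t\|_2^2=\sum_j\|\dot\xi_t^{(j)}\|_2^2$. Integrating in time and applying Cauchy--Schwarz would then convert an action bound $\int_0^1\|\dot\xi_t\|_2^2\,dt\lesssim\sum_j\delta_j^2=\delta^2$ (with $\delta_j=\|u_j\|_2$) into the length bound $\big(\int_0^1\|\dot\xi_t\|_2^2\,dt\big)^{1/2}\lesssim\delta$. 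Crucially, this scheme should be run as a \emph{monotone} flow in which particles move toward their targets, so that $\|\xi_t-h\|_2$ is decreasing: this is the continuous analogue of the paper's heuristic that bringing a cube back to $P(\kappa)$ lowers the $L^2$-distance, and it is what lets one bypass the $L^2$-increase obstruction (Assumption~A of Corollary~\ref{cor:cost:distant:cubes}) responsible for $\alpha<1$ in the discrete steps.

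The hard part will be the mismatch of norms. The quantity we control, $\|h-\Id\|_2$, bounds displacements but says nothing about derivatives of $h$, whereas the incompressibility defect of the straight line is governed by $Du$. A worst-case $h$ is highly oscillatory: it can have tiny $L^2$-displacement yet force a large correction cost concentrated at the finest scale, and the naive scale-by-scale summation loses a factor $\sqrt{\log N}$ over the clean $\delta$ bound. Turning the parallel correction into a genuinely volume-preserving path with \emph{no} logarithmic loss, and controlling the cross-interactions between scales during the Moser correction, is the essential obstacle; this is, I believe, exactly why only exponents $\alpha<1$ are presently within reach and why the sharp bound is stated here only as a conjecture.
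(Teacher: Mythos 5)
The statement you are addressing is Conjecture \ref{conj:sharp:shnirelman}: the paper offers no proof of it, and indeed explicitly states that a proof ``seems out of reach'' by the methods of the paper, because of the $L^2$-increase obstruction tied to Assumption A of Corollary \ref{cor:cost:distant:cubes}. So there is no paper proof to compare against, and the only question is whether your sketch actually closes the conjecture. It does not, and you concede as much in your final paragraph; what you have written is a research program whose central difficulty is left open, which puts you in essentially the same position as the author. The preliminary reductions are sound (right-invariance reduces to $h$ near $\Id$, and the regime $\|h-\Id\|_2\geq\delta_0$ follows from Theorem \ref{thm:shnirelman}), but everything after that is unproved.

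Concretely, three steps would fail as stated. First, the straight-line homotopy $\xi_t(x)=(1-t)x+t\,h(x)$ need not be a family of diffeomorphisms at all: if $h$ rotates a small ball by the angle $\pi$ (the continuous analogue of the swap of two adjacent cubes, i.e.\ the basic move of the whole discrete model), then locally $Du=-2I$ and $\det(I+t\,Du)$ vanishes at $t=\tfrac12$; trajectories cross, volume collapses, and the Moser/Hodge correction is undefined precisely on the simplest relevant examples. Your remark that the Jacobian defect is ``of second order in $Du$'' is only meaningful when $Du$ is small, and $\|h-\Id\|_2\leq\delta$ gives no control on $Du$ whatsoever --- this norm mismatch is not a technical nuisance but the heart of the problem, since the correction cost is governed by quantities the hypothesis does not bound. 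Second, the Littlewood--Paley pieces $u_j$ are not displacements of measure-preserving maps, the ``local measure-preserving rearrangements'' at scale $2^{-j}$ with cost $\|u_j\|_2$ are never constructed, and the claim that parallel corrections add in squares requires disjoint supports, which distinct dyadic scales do not have; by your own accounting this scheme loses at least a logarithmic factor, i.e.\ it does not reach $\alpha=1$. Third, the ``monotone flow'' postulate --- that the correction can be run so that $\|\xi_t-h\|_2$ decreases --- is exactly the unresolved obstruction the paper identifies as the reason $\alpha<1$ in every known argument; assuming it assumes the crux of the conjecture. In short: a plausible and well-motivated program, consistent with the paper's own discussion of why the sharp inequality is left as a conjecture, but not a proof.
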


We also think that our method could be a starting point towards an improvement of the $\alpha$ exponent of the inequality, since it presents some flexibility, while it seems out of reach the proof of \ref{conj:sharp:shnirelman} using these ideas because of the previous considerations. It is an interesting open problem to determine if in the discretized context it is possible to have the same bounds as in the continuous one.
 
\medskip

Moreover, a key difference with the continuous case is in the $2$-dimensional case, where we can not have the estimate of the continuous version of the inequality. We leave it as an open question

\medskip
\textbf{Open Problem $1$.} Find $\alpha$ sharp for the discrete inequality in the case $\nu=2$.

\medskip

         In the rest of the introduction we point out some open problems related to the optimal Shnirelman's inequality and connections to other fields.



\subsubsection{Computational aspects}
The discretized context is interesting also from a computational viewpoint. In \cite{Brenier2} Brenier proposed a discretized scheme in order to construct the optimal flow connecting two permutations.  Indeed, what is missing in our proof is the search for the optimal discrete flow connecting $P$ to $Q$,  since we do not know a priori what is the geometry of the permutations involved. We point out that 
Brenier proposed a discretized version of the action functional
\begin{equation*}
    \mathcal{A}\lbrace\xi_t\rbrace_{t_1}^{t_2}=\int_{t_1}^{t_2} \|\dot\xi_t\|^2_{L^2(M)}dt,
\end{equation*}
namely
\begin{equation*}
    \min \sum_j \|\sigma_{j+1}-\sigma_j\|^2_2,
\end{equation*}
where the minimum is taken among all sequence of permutations connecting $P$ to $Q$. It would be interesting to implement the proof we presented in connection to this functional.

\subsubsection{Combinatorial aspects}
The problem of finding the path (discrete flow) of minimum length connecting two permutations is an interesting field of research in the theory of Cayley Graphs associated to transposition trees (see \cite{DohanKim} for a survey on the topic).
We point out that our problem on the unit cube is indeed related to a transposition tree which is a $\nu$-dimensional lattice. Estimating the size of the diameter of the Cayley Graph enters in the proof of Theorem \ref{thm:step:3}, where we used a rough estimate on the diameter. The problem of estimating the diameter of Cayley graphs associated to transposition trees is in general hard. Though there are transposition trees with particular geometry for which this bound is sharp (see again \cite{DohanKim}), finding sharp bounds on the diameter of Cayley graph associated to $\nu$-dimensional lattices is an open question.  On the other side, since Inequality \eqref{eq:ineq:shnirelman} could be obtained in a continuous setting with a totally different approach, it could be interesting to investigate if it is possible to improve the known bounds on graphs (also for different topologies) through Shnirelman's inequality.

\subsubsection{Plan of the paper}
In Section \ref{S:preliminaries} we set the framework for the discrete configurations. We first introduce formally the Regular Lagrangian Flows \ref{sect:RLF}, since we will ask the vector fields to possess $\BV$ regularity (the construction of the vector fields will be given in the Appendix \ref{appendix}). In subsection \ref{SS:space:discrete:conf} we define rectangles and arrays, together with the concept of discrete configurations and colorings. In Subsection \ref{SS:Shnirlm:discrete:action} we define the $S$-elementary movement and Shnirelman's discrete flow, together with the discrete length functional introduced in \cite{Shnirelman}. In Subsection \ref{SS:new:length} we introduce the new length functional together with the new definition of elementary movements and discrete flow. In Theorem \ref{thm:equivalence:distance} we prove the equivalence of the two distances. The formal justification to this section is the content of the Appendix \ref{appendix}, though it is not relevant for the computation of Shnirelman's inequality. In Section \ref{S:improved:shnirelman} we prove the improved version of Shnirelman's inequality. In Subsection \ref{S:cost:coloring} we estimate the cost of connecting two colorings. In Subsection \ref{ss:main:thm} we finally give the proof of our main theorem. This is divided mainly into Subsection \ref{sss:step:1}, \ref{sss:step:2}, where we perform the Step $1$ and $2$, while Step $3$ and the proof of the main theorem are at the end of the section. In the Appendix \ref{appendix} we give an explicit construction of a vector field swapping two distant cubes. 

\medskip
      
       \textbf{Acknowledgements} The author thanks Stefan Schiffer for many useful and stimulating discussions on this topic.
\section{Preliminaries and Notation}\label{S:preliminaries}
\begin{itemize}
 \item If $B$ is any Borel set, $B^c$ denotes the complementary;
 \item $\text{int}(A)$ will denote the interior of the set $A$;
 \item the inequality $f\lesssim g$ means $f\leq C(\nu)g$, where $C(\nu)$ is a positive constant depending only on the dimension;
\item given $[0,1]^\nu$ the $\nu$-dimensional unit cube, we denote by $\mathcal{R}_N$ the tiling of cubes of side $N^{-1}$, with $N\in\mathbb{N}$;
    \item $R\subset\mathcal{R}_N$ is a $\nu$-rectangle, or simply a rectangle, and $A\subset\mathcal{R}_N$ a $\nu$-array or simply an array;
    \item given an array $A$, we will denote by $\ell(A)$ the length of $A$ and we will use the notation $A=[\kappa_1,\kappa_2,\dots,\kappa_{\ell(A)}]$;
    \item $\mathbf v_R$ is the vector field that in time $t=2$ rotates the rectangle $R$;
   
    \item a cube is $\kappa\in\mathcal{R}_N$, and $c(\kappa)$ denotes the center of the cube;
    \item $\mathcal E_{N^\nu}$ is the set of discrete configurations;
    \item $\mathfrak S_{N^\nu}$ is the symmetric group on the set $\lbrace 1,2,\dots,N^\nu\rbrace$;
    \item $\mathcal{D}_{N}$ is the group of permutations $P:[0,1]^\nu\rightarrow [0,1]^\nu$.
    \end{itemize}
\subsection{Regular Lagrangian Flows}\label{sect:RLF}
Throughout the paper we will consider divergence-free vector fields $\mathbf v:[0,1]\times [0,1]^\nu\rightarrow \mathbb R^\nu$ in the space $L^1([0,1];\text{BV}([0,1]^\nu))\cap L^1([0,1];L^2([0,1]^\nu))$ (in short $\mathbf v\in L^1_t\text{BV}_x\cap L^1_tL^2_x$). We will use them to formally justify the definition of the discrete Length functional \ref{eq:discr:length:new}. We point out that all the vector fields under considerations could be possibly made smooth in the interior of cubes or tube structures (see Appendix \ref{appendix}), but we prefer to keep the $\BV$ regularity to make the constructions easier. We simply recall that, under this regularity assumption, it is still possible to give a notion of flow (the \emph{Regular Lagrangian Flow}) \cite{Ambrosio:BV}.
 More in detail, we give the following
\begin{definition}
    Let $\mathbf v\in L^1([0,1]\times \mathbb R^\nu; \mathbb R^\nu)$ be a divergence-free vector field. A map $X:[0,1]\times \mathbb R^\nu\rightarrow \mathbb R^\nu$ is a \emph{Regular Lagrangian Flow} (RLF) for the vector field $\mathbf v$ if
    \begin{enumerate}
        \item for a.e. $x\in \mathbb R^2$ the map $t\rightarrow X_t(x)$ is an absolutely continuous integral solution of
        \begin{equation*}
            \begin{cases}
    \frac{d}{dt} x(t)=\mathbf v(t, x(t)); & \\
    x(0)=x.
    \end{cases}
        \end{equation*}
        \item 
        \begin{equation*}
            \mathcal L^2(X_t^{-1}(A))=\mathcal L^2(A),\quad\forall A\in\mathcal{B}(\mathbb R^2),\quad\forall t\in[0,1].
        \end{equation*}
    \end{enumerate}
\end{definition}
 We will often call \emph{time-1 map} the RLF $X_{t=1}$ of the vector field $\mathbf v$ when evaluated at time $t=1$. 
 Existence, uniqueness and stability of RLFs for $\BV$ vector fields has been established in \cite{Ambrosio:BV}.
Regular Lagrangian flows have the advantage to allow rigid translations of subsets of $[0,1]^\nu$, like permutations of subcubes of some tiling $\mathcal{R}_N$, $N\in\N$ of $[0,1]^\nu$, since they do not preserve the property of a set to be connected along the time evolution, thus they constitute the right framework for the problem. \\

In the appendix and throughout the paper we will extensively use flows rotating rectangles in order to swap adjacent cubes (Figure \ref{fig:trasp}). Since the extension to higher dimension is straightforward, we will present the results of this paragraph in dimension $\nu=2$. We define the \emph{rotation flow} $r_{t}:[0,1]^2\rightarrow [0,1]^2$ for $t\in[0,1]$ in the following way: call
\begin{equation*}
	V(x)=\max \left\{ \left| x_1-\frac{1}{2}\right|, \left| x_2-\frac{1}{2}\right|\right\}^2, \quad x=(x_1,x_2)\in [0,1]^2.
\end{equation*}
Then the \emph{rotation field} is the divergence-free vector field $r:[0,1]^2\rightarrow \mathbb R^2$ satisfying
\begin{equation}
	\label{rotation field}
	{\mathbf r}(x)=\nabla^\perp V(x),
\end{equation}
where $\nabla^\perp =(-\partial_{x_2},\partial_{x_1})$ denotes the orthogonal gradient. Finally the rotation flow $r_{t}$ is the flow of the vector field $r$, i.e. the unique solution to the following ODE system:
\begin{equation}
\label{rot:flow:square}
	\begin{cases} 
	\dot {r}_{t}(x)={\mathbf r}(r_{t}(x)), \\
	r_{0}(x)=x.
	\end{cases}
\end{equation}
We remark that this flow rotates the unit square $[0,1]^2$ counterclockwise of an angle $\frac{\pi}{2}$ in a unit interval of time. In particular, to rotate the rectangle $R=[0,a] \times [0,b]\subset\mathbb R^2$ with $a,b>0$, then we first consider the rotation flow
\begin{equation*}
    R_{t}=\chi^{-1}\circ r_t\circ \chi,
\end{equation*}
where $\chi:R\rightarrow [0,1]^2$ is the affine map sending $R$ into the unit cube and $r_t$ is the rotation flow defined in \eqref{rot:flow:square}. Then if we denote by $\mathbf v_R$ the divergence-free vector field associated with $R_t$, it is defined as
\begin{equation}\label{eq:rot:vf:rectangle}
\mathbf v_R(x) = \nabla^\perp V_{a,b} = \begin{cases}
\big( 0, \frac{2b}{a} \big( x_1 - \frac{a}{2} \big) \big) & |x_1| \geq \frac{b}{a} |x_2|, 0 \leq x_1 \leq a, \\
\big( -\frac{2a}{b} \big( x_2 - \frac{b}{2} \big),0 \big) & |x_1| < \frac{b}{a} |x_2|, 0 \leq x_2 \leq b.
\end{cases}
\end{equation}
Here, the potential $V_{a,b}$ is the function
\begin{equation*}
V_{a,b}(x) = \max \bigg\{ \frac{b}{a} \bigg( x_1 - \frac{a}{2} \bigg)^2, \frac{a}{b} \bigg( x_2 - \frac{b}{2} \bigg)^2 \bigg\}.
\end{equation*}
Observe that we can easily estimate the $L^1_tL^2_x$-norm of the vector field $ \mathbf v_R$ as
\begin{equation*}
    \| \mathbf v_R\|^2_{L^1_tL^2_x(R)}\leq C|R|\max{\lbrace a,b\rbrace }^2,
\end{equation*}
where $C$ is a positive constant depending on the dimension $\nu$ and $|R|$ denotes the area of $R$. 
We point out that it can be proved that the total variation of the previous vector field is bounded, as done in \cite{Bianchini_Zizza_residuality}, so that the vector field possesses a RLF. In the appendix there will be given the formal definition of the divergence-free vector fields that will justify all the subsequent computations.

\subsection{Space of discrete configurations and Shnirelman's length functional}\label{SS:space:discrete:conf}

In this section we will introduce the discrete setting. 

\medskip
A tiling $R_N$ of $[0,1]^\nu$ of size $N\in\mathbb{N}$ is a partition of $[0,1]^\nu$ into $N^\nu$ identical subcubes $\kappa_{i_1i_2\dots i_\nu}= \left[\frac{i_1-1}{N},\frac{i_1}{N}\right]\times\dots\times \left[\frac{i_\nu-1}{N},\frac{i_\nu}{N}\right]$, with $i_j\in\lbrace 1,2,\dots,N\rbrace$. We will omit the multi-index notations for the subcubes simply denoting them by $\kappa$ if their position is not relevant, and we will write $\kappa\in \mathcal{R}_N$ to denote that the cube is in the tiling $\mathcal{R}_N$. We will denote by $c(\kappa)$ the center of the cube $\kappa$. Finally, two subcubes $\kappa,\kappa'\in R_N$ are \emph{adjacent} if they share a $(\nu-1)$-dimensional face.

\medskip

A $\nu$-rectangle $R\subset[0,1]^\nu$ is a set of the following form
\begin{equation*}
    \left[\frac{ h_1}{N},\frac{\tilde h_1}{N}\right]\times\dots\times \left[\frac{h_\nu}{N},\frac{\tilde h_\nu}{N}\right],
\end{equation*}
 where $h_j,\tilde h_j\in\lbrace 1,\dots,N\rbrace$ and $h_j<\tilde h_j$. Observe that any $\nu$-rectangle $R$ is union of subcubes of the tiling, thus we will write $R\subset \mathcal R_N$. If the dimension needs not to be specified, we will call the $\nu$-rectangles simply \emph{rectangles}. Two rectangles $R,\tilde R$ are \emph{disjoint} if $\text{int}(R)\cap\text{int}(\tilde R)=\emptyset.$
 
 \medskip
 
 A \emph{array} $A$ is a rectangle such that there exists unique $j\in\lbrace 1,\dots,\nu\rbrace$ with $\tilde h_k- h_k=1$ for every $k\not = j$. The \emph{length} of an array $A$ is the number $\ell(A)$ of subcubes $\kappa\subset A$. We will often denote an array as $[\kappa_1,\kappa_2,\dots,\kappa_{\ell(A)}].$ 

 \medskip
 
 A (discrete) \emph{configuration} in $\mathcal R_N$ is a bijection $i:\mathcal R_N\rightarrow \lbrace 1,2,\dots,N^\nu\rbrace$. By writing $i:\mathcal{R}_N\rightarrow \lbrace 1,2,\dots,N^\nu\rbrace$ we mean that the function is defined on $[0,1]^\nu$ and it is constant on the elements of the tiling $\mathcal{R}_N$. We will denote the set of all discrete configurations by $\mathcal E_{N^\nu}$: clearly $|\mathcal E_{N^\nu}|=N^\nu!$. Let $\mathfrak S_{N^\nu}$ be the symmetric group, then for every $\sigma\in\mathfrak S_{N^\nu}$, for every $i\in \mathcal E_{N^\nu}$, it holds $\sigma\circ i \in \mathcal E_{N^\nu}$. Similarly, we can define configurations on arrays as bijections, indeed given an array $A$ we can define a bijection $i_A:A\rightarrow\lbrace 1,2,\dots,N^\nu\rbrace$.
  For an array $A$ of length $\ell(A)$, we will denote the \emph{canonical configuration} as $[\kappa_1,\kappa_2,\dots,\kappa_{\ell(A)}]$ and the \emph{reverse configuration} as $[\kappa_{\ell(A)},\kappa_{\ell(A)-1},\dots,\kappa_2,\kappa_1]$. 

 \medskip
 
 In the following, we will not distinguish between permutations of cubes and configurations. More precisely,
 
\begin{definition}[Permutation of cubes]\label{def:permutation:cubes}
  A permutation $P$ of cubes of $\mathcal{R}_N$ is a measure-preserving map $P:[0,1]^\nu\rightarrow [0,1]^\nu$ with the following property: there exists $\sigma\in\mathfrak{S}_{N^\nu}$ such that $i(P(\kappa))=\sigma(i(\kappa))$, for all $i\in\mathcal E_{N^\nu}$, for all $\kappa\in\mathcal{R}_N$. Moreover, $P(\kappa)=\kappa+ c(P(\kappa))-c(\kappa)$ for all $\kappa\in\mathcal{R}_N$, that is the permutation $P$ translates the cubes.
   \end{definition}
   We will call the group of permutations $P$ defined above $\mathcal{D}_N$. 
   Similarly as in the previous definition, one can simply consider permutation of subcubes of a rectangle $R$ oran array $A$. The definition is straightforward ($P(\kappa)\subset A$, $\forall\kappa\subset A$).
   \medskip

Since in the proof of inequality \eqref{eq:ineq:shnirelman} presented in \cite{Shnirelman} colorings of the tiling $\mathcal{R}_N$ are useful for the study of permutations, we will give the definition of a coloring. 

\begin{definition}[Coloring]\label{def:coloring}
    Let $R\subset\mathcal{R}_N$ be a rectangle. A coloring of $R$ into $m$ colors w.r.t. the tiling $\mathcal{R}_N$ is a map 
    \begin{equation*}
        \Pi:R\rightarrow\lbrace{0,1,\dots,m-1}\rbrace
    \end{equation*}
    such that $\Pi(\kappa)=j$, for every $\kappa\subset R$, with $j\in\lbrace 0,1,\dots,m-1\rbrace$.
\end{definition}
Colorings of the tiling $\mathcal{R}_N$ are constant maps on the cubes of the tiling. We can think to them as a specific color given to each subcube $\kappa$.
If $m=2$ we will also say that the coloring is into \emph{black and white cubes}, where the white cubes satisfy $\Pi(\kappa)=0$. More generally, we will often call \emph{whites} those cubes of the tiling for which $\Pi(\kappa)=0$; we will call \emph{colored} the other ones.

\begin{definition}[Canonical coloring]\label{def:canonical:coloring}
    Let $A$ bean array of length $\ell(A)$, and let $\Pi$ be
    a coloring of $A$ into black and white cubes. We say that $\Pi$ is the canonical coloring of $A=[\kappa_1,\dots,\kappa_{\ell(A)}]$ if there exists $w\in\lbrace 1,2,\dots, \ell(A)\rbrace$ such that $\Pi(\kappa_j)=0$ for every $j\leq w$ and $\Pi(\kappa_j)=1$ for every $j\geq w+1$.
\end{definition}

After introducing the discrete framework of arrays, configurations and colorings, we start considering those permutations that can be realized as time-1 map of the flow of some vector field. 

We recall the definition of \emph{swap} of adjacent cubes (see Figure \ref{fig:trasp}).
\begin{definition}[Swap]
\label{def:swap}
    Let $\kappa_1,\kappa_2\in R_N$ be two adjacent subcubes and denote by $x_1=c(\kappa_1),x_2=c(\kappa_2)$ their respective centers. A swap between $\kappa_1,\kappa_2$ is a map $T:[0,1]^\nu\rightarrow [0,1]^\nu$ defined as follows:
    \begin{equation}\label{eq:swap}
        T(x)=\begin{cases}
              x+x_2-x_1 \quad\text{ if } x\in \text{int}(\kappa_1), \\ 
              x+x_1-x_2 \quad\text{ if } x\in \text{int}(\kappa_2), \\
              x \quad\text{otherwise}.
        \end{cases}
    \end{equation}
\end{definition}
\begin{remark}
    Observe that if $T$ is a swap of two adjacent subcubes, then for every $i\in \mathcal{E}_{N^\nu}$ there exists $\sigma\in \mathfrak S_{N^\nu}$ transposition such that $\sigma\circ i=i\circ T$, for every $i\in\mathcal{E}_{N^\nu}$ in particular $\sigma=i\circ T\circ i^{-1}$.\end{remark}


By definition of swap, two \emph{adjacent} cubes can be \emph{connected} by a transposition, which can be defined simply as an exchange between the two cubes: let $\kappa_1,\kappa_2$ two adjacent cubes of side $\frac{1}{N}$ and let $R=\kappa_1\cup \kappa_2$, then the \emph{transposition flow} between $\kappa_1,\kappa_2$ is $T_t(\kappa_1,\kappa_2):[0,1]\times [0,1]^\nu\rightarrow [0,1]^\nu$ defined as
\begin{equation}
	\label{transposition flow}
		T_t(\kappa_1,\kappa_2)=
	\begin{cases}
	\chi^{-1}\circ r_{4t}\circ \chi & x\in \mathring{R}, \ t\in\left[0,\frac{1}{2}\right], \\
		\chi_1^{-1}\circ r_{4t}\circ \chi_1 & x\in\mathring{\kappa}_1, \ t\in\left[\frac{1}{2},1\right], \\
		\chi_2^{-1}\circ r_{4t}\circ \chi_2 & x\in\mathring{\kappa}_2, \ t\in\left[\frac{1}{2},1\right], \\
		x & \text{otherwise},
	\end{cases}
\end{equation}
where the map $\chi:R\rightarrow K$ is the affine map sending the parallelepiped $R$ into the unit cube $[0,1]^\nu$, $\chi_{1},\chi_{2}$ are the affine maps sending $\kappa_1,\kappa_2$ into the unit cube $[0,1]^\nu$ and $r$ is the rotation flow (\ref{rot:flow:square}), when defined for a generic dimension $\nu$. This invertible measure-preserving flow has the property to exchange the two cubes in the unit time interval, and was introduced in \cite{Bianchini_Zizza_residuality}, where a simple computation gives also the correct $\BV$ regularity.
\subsubsection{Shnirelman discrete length}\label{SS:Shnirlm:discrete:action}
In this subsection we use the framework introduced above and we define the discrete Length functional defined in  \cite{Shnirelman}, that we will compare to the one of next section. Before, we introduce the elementary movements, that here we call $S$-elementary movements to distinguish them from those ones of the next section.
\begin{definition}[S-elementary movement]\label{def:S:movement}
    A S-elementary movement is a measure-preserving map $S:[0,1]^\nu\rightarrow[0,1]^\nu$ with the following property: there exist $T_1,\dots,T_j$ swaps, respectively of the  couples of adjacent subcubes $(\kappa^1_1,\kappa^1_2),\dots,(\kappa^j_1,\kappa^j_2)$, with 
    \begin{equation*}
        \text{int}(\kappa^h_1\cup \kappa^h_2)\cap\text{int}(\kappa^k_1\cup \kappa^k_2)=\emptyset, \quad h\not =k,
    \end{equation*}
    and 
    \begin{equation*}
        S=T_j\circ T_{j-1}\circ\dots T_2\circ T_1.
    \end{equation*}
   
\end{definition}
In particular, a $S$-elementary movement is a map that swaps simultaneously couples of adjacent cubes.  Given a $S$-elementary movement with $S=T_j\circ\dots T_1$, we denote the number of simultaneous swaps of $S$ (that is, the number of couples that are swapped simultaneously) by $\text{swap}(S)=j$.
\begin{remark}
    We remark that, for each $S$ $S$-elementary movement, $\text{swap}(S)\lesssim N^{1+\nu}$, where the inequality holds up to some constant depending on the dimension $\nu$.
\end{remark}
\begin{remark}
    Each $S$-elementary movement can be trivially realized as the time-1 map of a divergence-free vector field using the transposition flow \eqref{transposition flow}. 
\end{remark}
We are ready to give the definition of discrete flow, that is
\begin{definition}[S-discrete flow]\label{def:S:flow}
    A S-discrete flow is a finite sequence $\lbrace S_k\rbrace _{k=1}^T $ with $S_k$ S-elementary movement for every $k=1,2,\dots,T$. The parameter $T$ is called the \emph{duration} of the flow.
\end{definition}
Let $P,Q:[0,1]^\nu\rightarrow [0,1]^\nu$ be two permutations (Definition \ref{def:permutation:cubes}).
  We say that $P$ is sent into $Q$ by the discrete flow $\lbrace S_k\rbrace_{k=1}^T$ if
\begin{equation*}
    S_T\circ S_{T-1}\circ \dots S_2 \circ S_1\circ P=Q.
\end{equation*}
In this case we say that the discrete flow $\lbrace S_j\rbrace_{j=1}^T$ \emph{connects} the two permutations $P,Q$. 
In particular, the \emph{discretized length functional} (which is the discrete counterpart of \eqref{def:length:functional:cont}), that quantifies the cost for connecting $P$ to $Q$ is
\begin{equation}\label{eq:discr:length}
    L_S(P,Q)=L_S\lbrace{S_j}\rbrace_{j=1}^T=\sum_{j=1}^{T} N^{-1-\frac{\nu}{2}}\sqrt{\text{swap}(S_j)}.
\end{equation}
Finally, we define the \emph{discrete distance} on the space $\mathcal{D}_N$ as
\begin{equation}
    \text{dist}^S_{\mathcal{D}_N}(P,Q)=\underset{ \text{connecting } P\text{ to }Q}{\min_{\lbrace S_j\rbrace_{j=1}^{T} }} L_S\lbrace S_j\rbrace_{j=1}^T.
\end{equation}

In \cite{Shnirelman}, it is proved the following
\begin{theorem}\label{thm:shn:discr}
    Let $\nu=2$, then there exists $C>0$ such that, for every $N\in \mathbb{N}$, for every $P,Q\in\mathcal{D}_N$,
    \begin{equation*}
         \text{dist}^S_{\mathcal{D}_N}(P,Q)\leq \|P-Q\|_2^{\frac{1}{64}}. 
    \end{equation*}
\end{theorem}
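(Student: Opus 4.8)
The plan is to prove the estimate by a multiscale \virgolette{divide and conquer} scheme built entirely out of $S$-elementary movements. First I would reduce to the case $Q=\Id$: both $\|\cdot\|_2$ and $\text{dist}^S_{\mathcal{D}_N}$ are invariant under right composition with a fixed permutation $R$ (for the norm this is the change of variables $\int|P(R^{-1}x)-x|^2dx=\int|P(y)-R(y)|^2dy$, using that $R$ is measure-preserving; for the distance, post-composing any connecting flow with $R$ connects $P\circ R$ to $Q\circ R$ at the same cost), so taking $R=Q^{-1}$ it suffices to bound $\text{dist}^S_{\mathcal{D}_N}(P,\Id)$ in terms of $\delta:=\|P-\Id\|_2$. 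I would also dispatch the regime $\delta\gtrsim 1$ at once: any permutation of $\mathcal{R}_N$ can be realized in $O(N)$ rounds of disjoint adjacent swaps (the grid has diameter $\sim N$ and admits offline permutation routing in $O(N)$ parallel steps), and each round is an $S$-movement of cost $N^{-1-\nu/2}\sqrt{\text{swap}}\lesssim N^{-1-\nu/2}\sqrt{N^\nu}=N^{-1}$; hence the $\text{dist}^S$-diameter of $\mathcal{D}_N$ is $O(1)$ uniformly in $N$, which settles large $\delta$. So I assume $\delta\ll 1$ and fix a scale $\epsilon\in(0,\tfrac12]$ (recall $\tfrac{2}{2+\nu}=\tfrac12$ for $\nu=2$).

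\textbf{Step 1 (reduce the maximal displacement).} Color $\kappa$ white if $|c(P(\kappa))-c(\kappa)|\le\delta^\epsilon$ and black otherwise. Since $\|P-\Id\|_2^2=N^{-\nu}\sum_\kappa|c(P(\kappa))-c(\kappa)|^2\le\delta^2$, the number of black cubes is at most $\delta^{2-2\epsilon}N^\nu$ (the first volume count). Routing these cubes toward low-displacement positions in $O(N)$ parallel rounds, each round an $S$-movement with at most $\delta^{2-2\epsilon}N^\nu$ swaps, costs $\lesssim N\cdot N^{-1-\nu/2}\sqrt{\delta^{2-2\epsilon}N^\nu}=\delta^{1-\epsilon}$. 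This yields $\tilde P$ with all displacements $\lesssim\delta^\epsilon$, at the price of an enlarged $L^2$-norm $\|\tilde P-\Id\|_2\lesssim\delta^{1-\epsilon/2}$, because the routed black cubes disturb the whites along their paths (the second volume count).

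\textbf{Step 2 (make the permutation block-constant).} Now each fine cube of $\tilde P$ moves by at most one coarse cube of the tiling $\mathcal{R}_{\delta^\epsilon}$, so the only obstruction to $\tilde P$ being constant on $\mathcal{R}_{\delta^\epsilon}$ (every fine cube staying inside its coarse cube) is the \emph{leakage} of fine cubes across coarse faces. I would bound the leaked volume using the $L^2$ estimate from Step 1 and then swap the leaked cubes back into their home coarse cubes. This is the expensive step and, I expect, the main obstacle: the quality of the exponent is governed entirely by how sharply one controls the volume of long trajectories crossing coarse boundaries. A crude bound here is exactly what pins Shnirelman's value; heuristically it contributes a cost $\delta^{a(\epsilon)}$ whose exponent $a(\epsilon)$ degrades quickly, and it is this estimate (not the routing) that must be sharpened to do better.

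\textbf{Step 3 (resolve inside each coarse cube) and conclusion.} Once the permutation is constant on $\mathcal{R}_{\delta^\epsilon}$, each coarse cube of side $\delta^\epsilon$ (containing $(\delta^\epsilon N)^\nu$ fine cubes) carries an arbitrary internal permutation; sorting all of them simultaneously by the rescaled diameter bound of the reduction takes $O(\delta^\epsilon N)$ rounds, each of cost $N^{-1}$ (since across all coarse cubes at most $N^\nu$ swaps occur per round), hence $\lesssim\delta^\epsilon$. Concatenating the three flows and using the triangle inequality gives $\text{dist}^S_{\mathcal{D}_N}(P,\Id)\lesssim\delta^{1-\epsilon}+\delta^{a(\epsilon)}+\delta^\epsilon$, and optimizing $\epsilon$ so as to balance the (crude) Step 2 exponent against $\epsilon$ yields the exponent $\alpha=\tfrac{1}{64}$. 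The two elementary volume counts and the routing are routine; the whole difficulty, and the place where any improvement must come from, is the trajectory/leakage estimate in Step 2.
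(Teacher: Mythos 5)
Your overall architecture (reduction to $Q=\Id$, the three-step scheme, the volume counts in Steps 1 and 3) is consistent with the standard approach; note, incidentally, that the paper does not actually prove this particular theorem but quotes it from Shnirelman's work, and then reproves a sharpened version ($\alpha\geq\frac{2}{7}$ for $\nu=2$) following exactly this scheme, so the only available benchmark is the paper's treatment of the improved inequality.

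The genuine gap is Step 2, and it is fatal to the proposal as a proof. You leave the central estimate as a placeholder: you never identify the exponent $a(\epsilon)$, never construct the flow that returns the leaked fine cubes to their coarse cubes while preserving the displacement constraint $|\tilde P(\kappa)-\kappa|\lesssim\delta^\epsilon$ and controlling the further growth of the $L^2$-norm, and never carry out the optimization that is supposed to produce $\frac{1}{64}$. Since the entire content of the theorem is the numerical exponent, asserting that \virgolette{optimizing $\epsilon$ yields $\alpha=\frac{1}{64}$} without a concrete $a(\epsilon)$ is circular; the value $\frac{1}{64}$ cannot be recovered from the argument as written. In the paper's proof of the improved inequality this is precisely the step requiring new quantitative input — the orbit-counting argument of Lemma \ref{lem:cost:trajectories}, the volume constraint of Assumption A in Corollary \ref{cor:cost:distant:cubes}, and the height estimate of Lemma \ref{lem:cost:Step:2} — which together give the explicit Step-2 cost $\max\lbrace\delta^{\frac{1}{2}-\frac{3\epsilon}{4}},\delta^{\frac{1}{6}+\frac{7\epsilon}{12}}\rbrace$ and hence $\alpha\geq\frac{2}{7}$. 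Without an analogous estimate your proposal only establishes $\text{dist}^S_{\mathcal{D}_N}(P,\Id)\lesssim\delta^{1-\epsilon}+\delta^{a(\epsilon)}+\delta^\epsilon$ for an unspecified $a(\epsilon)$, which proves no exponent at all.
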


\begin{remark}
    We remark that, though the proof of Shnirelman's inequality is valid in any dimension $\nu$, in the paper \cite{Shnirelman} it is not given an explicit expression for the exponent $\alpha$ of the $L^2$-norm depending on $\nu$. 
\end{remark}
\begin{remark}
    In order to minimize the cost functional $L(P,Q)$, notice that it is convenient doing the maximum amount of simultaneous swapping at each time step. This gives the connection with combinatorics and the theory of Cayley Graphs.
\end{remark}

The following theorem will be of key importance in the proof of the third step of Shnirelman's inequality. This theorem is proved in \cite{Shnirelman} and used for similar purposes there.
 
\begin{theorem}[Duration, \cite{Shnirelman}]\label{thm:duration}
    There exists a positive constant $C>0$ depending only on the dimension $\nu$ such that, for every $R\subset\mathcal{R}_N$  rectangle of sides $\frac{M_1}{N},\dots,\frac{M_\nu}{N}$, and for every $P,Q\in\mathcal{D}_N$ permutations of $R$, such that there exists a $S$-discrete flow connecting $P$ to $Q$ whose duration does not exceed $C\max\lbrace M_1,\dots,M_\nu\rbrace.$
\end{theorem}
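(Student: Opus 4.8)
The plan is to reformulate the statement as a diameter bound. Writing $\sigma=Q\circ P^{-1}$, a flow $\{S_j\}_{j=1}^{\mathcal T}$ connects $P$ to $Q$ precisely when $S_{\mathcal T}\circ\cdots\circ S_1=\sigma$, so it suffices to show that \emph{every} permutation $\sigma\in\mathcal{D}_N$ of the cubes of $R$ can be written as a composition of at most $C(\nu)\max_i M_i$ many $S$-elementary movements. Note that each swap is an involution and disjoint swaps commute, so every $S$-elementary movement is its own inverse; consequently a procedure that sorts an arbitrary configuration to the canonical one in $\mathcal T$ rounds, read in reverse order, realizes the corresponding permutation in the same number of rounds. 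Hence I only need a parallel sorting scheme with the claimed depth. The one-dimensional building block is odd--even transposition (bubble) sort: on a single array of length $L$, alternately swapping all odd-indexed adjacent pairs and all even-indexed adjacent pairs brings any configuration to the canonical one in at most $L$ rounds, and since within one round the swapped pairs are pairwise disjoint, each round is a single $S$-elementary movement.

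\textbf{Parallelism across parallel lines.}
The key efficiency gain in dimension $\nu$ is that all the one-dimensional arrays parallel to a fixed coordinate axis are pairwise disjoint. Therefore I can run the odd--even procedure on all of them simultaneously, and a single simultaneous round is still a legitimate $S$-elementary movement (a disjoint union of adjacent swaps). I call \emph{axis permutation} along axis $i$ any map that permutes the cubes inside each line parallel to the $i$-th axis independently; by the previous step such a map is realized by at most $\max_j M_j$ many $S$-elementary movements, \emph{regardless of how many lines are involved.}

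\textbf{Decomposing an arbitrary permutation into $O(\nu)$ axis permutations.}
It remains to express $\sigma$ as a composition of a number of axis permutations depending only on $\nu$. First I would treat $\nu=2$ by the classical three-phase \emph{column--row--column} routing: (Phase 1) permute within each column, (Phase 2) permute within each row, (Phase 3) permute within each column again. Phases 2 and 3 trivially fix the two coordinates once Phase 1 is correctly chosen, so the only nontrivial point is to arrange Phase 1 so that after it each row contains exactly one cube destined for each target column. This is a balancing problem: form the $M_1\times M_1$ nonnegative integer matrix $A$ whose entry $A_{c,t}$ counts the cubes currently in column $c$ whose target lies in column $t$; since $\sigma$ is a permutation, every row sum and every column sum of $A$ equals $M_2$. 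By the Birkhoff--von Neumann theorem (equivalently Hall's marriage theorem / König edge-colouring), $A=\sum_{k=1}^{M_2}P_k$ with each $P_k$ a permutation matrix, and assigning row $k$ to carry the cubes indexed by $P_k$ yields simultaneously a valid within-column rearrangement (each column sends one cube to each row) and the required distinctness of target columns per row. For general $\nu$ I would iterate this dimension by dimension: use the same Hall/Birkhoff balancing to route every cube into its correct slab perpendicular to the last axis, then recurse on the $(\nu-1)$-dimensional slabs in parallel. Each dimension contributes a bounded number of axis-permutation phases, so the total number of phases is some $C(\nu)$, and multiplying by the $\max_i M_i$ bound per phase gives duration $\le C(\nu)\max_i M_i$, as claimed.

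\textbf{Main obstacle.}
The routine parts are the one-dimensional bubble sort and the parallelization over disjoint lines; the genuine difficulty is the combinatorial balancing of Phase 1 and, more seriously, controlling the recursion in general dimension so that the number of axis-permutation phases stays bounded by a function of $\nu$ \emph{alone} (never of the $M_i$). The Birkhoff--von Neumann / Hall input is what guarantees that a \emph{constant} number of one-axis sorting sweeps suffices to realize an \emph{arbitrary} permutation; verifying that the slab-by-slab recursion preserves this constant-depth property is the delicate bookkeeping on which the linear-in-$\max_i M_i$ bound ultimately rests.
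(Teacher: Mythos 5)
The paper does not actually prove Theorem \ref{thm:duration}: it is imported verbatim from \cite{Shnirelman} (``This theorem is proved in \cite{Shnirelman} and used for similar purposes there''), so there is no in-paper argument to compare against. Judged on its own, your proof is correct and is essentially the classical mesh-routing argument, which is also the spirit of Shnirelman's original proof: reduce to realizing a single permutation $\sigma=Q\circ P^{-1}$; realize any permutation of a line of length $L$ in at most $L$ rounds of disjoint adjacent swaps via odd--even transposition sort (each round is a legitimate $S$-elementary movement, and running it simultaneously on all parallel lines costs nothing extra); and decompose $\sigma$ into a bounded number of axis permutations via the column--row--column scheme, where the only nontrivial point is the Phase~1 balancing, correctly handled by K\H{o}nig/Hall/Birkhoff applied to the $M_1\times M_1$ matrix with constant row and column sums $M_2$. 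The one place where you are thinner than a complete write-up is the general-$\nu$ recursion, which you flag yourself; it does go through (any permutation of a $\nu$-dimensional box factors into $2\nu-1$ axis permutations by iterating the same regular-bipartite-multigraph decomposition, the regularity being preserved at each level because $\sigma$ is a bijection), giving duration at most $(2\nu-1)\max_i M_i$ after the two-sided ``sort both $P$ and $Q$ to canonical'' reduction. Net: a sound, self-contained proof of a statement the paper leaves as a black box.
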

In particular, this theorem states that, the maximum amount of time to send a permutation $P$ into another one is $N$, which is the inverse of the side of the cubes of the tiling. 
\begin{remark}
    Shnirelman's original proof in \cite{Shnirelman} is built using estimates on the maximum amount of time needed for doing the steps. The duration only partially uses the information of $L^2$-norm, and this is the reason why with this approach the $\alpha$ obtained by Shnirelman is very low.
\end{remark}
We use this theorem to prove a useful corollary. First, we define the cost for connecting a coloring $\Pi$ into white and black cubes to a coloring $\tilde \Pi$ (with  the same amount of white and black cubes) through a $S$-discrete flow.
Let $R$ be a rectangle and let $\Pi,\tilde \Pi:R\rightarrow\lbrace0,1\rbrace$ be two colorings into black and white cubes such that
$b=\sharp\lbrace\kappa\subset R:\Pi(\kappa)=1\rbrace=\sharp\lbrace\kappa\subset R:\tilde\Pi(\kappa)=1\rbrace.$

With abuse of notation, we say that $\Pi$ is sent into $\tilde\Pi$ by the discrete flow $\lbrace S_k\rbrace_{k=1}^T$ if
 
\begin{equation*}
    \Pi(\kappa)=\tilde\Pi\circ S_T\circ S_{T-1}\circ \dots S_2 \circ S_1 (\kappa),\forall\kappa\subset R.
\end{equation*}
In this case we say that the discrete flow $\lbrace S_j\rbrace_{j=1}^T$ \emph{connects} the two colorings $\Pi,\tilde\Pi$. Clearly, the cost of the coloring is $L_S(\Pi,\tilde\Pi)=L_S\lbrace S_j\rbrace_{j=1}^T.$

\begin{corollary}[Cost of coloring arrays]\label{cor:cost:color}
    Let $A\subset\mathcal{R}_N$ be an array and let $\Pi,\tilde\Pi:A\rightarrow\lbrace 0,1\rbrace$ be two colorings in black and white cubes such that, if $b$ denotes the number of black cubes then $b=\sharp\lbrace\kappa\subset R:\Pi(\kappa)=1\rbrace=\sharp\lbrace\kappa\subset R:\tilde\Pi(\kappa)=1\rbrace$.  Then $L(\Pi,\tilde\Pi)\lesssim \ell(A)N^{-1-\frac{\nu}{2}}\min\lbrace\sqrt{b},\sqrt{\ell(A)-b}\rbrace$.
\end{corollary}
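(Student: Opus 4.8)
The plan is to reduce the problem to sorting a single coloring into the canonical one and then to realise that sorting by an explicit $S$-discrete flow whose cost can be read off directly from \eqref{eq:discr:length}. First I would exploit the black/white symmetry: interchanging the two colors replaces $b$ by $\ell(A)-b$ while leaving the pair of colorings and the connecting cost unchanged. Hence, without loss of generality, I may assume $b\leq \ell(A)-b$, so that $\min\{\sqrt b,\sqrt{\ell(A)-b}\}=\sqrt b$, and it is enough to prove $L(\Pi,\tilde\Pi)\lesssim \ell(A)N^{-1-\frac{\nu}{2}}\sqrt b$.

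Next, since the cost is symmetric (reversing an $S$-discrete flow costs the same) and satisfies the triangle inequality, I would introduce the canonical coloring $\Pi_0$ of $A$ with all $b$ black cubes pushed to one end (Definition \ref{def:canonical:coloring}) and estimate $L(\Pi,\tilde\Pi)\leq L(\Pi,\Pi_0)+L(\Pi_0,\tilde\Pi)$. Because $\Pi$ and $\tilde\Pi$ carry the same number $b$ of black cubes, the two terms are of the same type, so it suffices to bound $L(\Pi,\Pi_0)$ for an arbitrary coloring $\Pi$ with $b$ black cubes and then double the estimate, the factor $2$ being absorbed into $\lesssim$.

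For this last estimate I would exhibit an explicit $S$-discrete flow that sorts $\Pi$ into $\Pi_0$ by only ever swapping a black cube with an adjacent white cube that lies on the wrong side of it, for instance an odd–even transposition sort along the array. Two features of such a flow produce precisely the two factors in the claim. On one hand, each elementary movement $S_j$ swaps disjoint adjacent black/white pairs, and since every such swap consumes one distinct black cube, $\text{swap}(S_j)\leq b$ (and symmetrically $\leq \ell(A)-b$), so $\sqrt{\text{swap}(S_j)}\leq \sqrt b$. On the other hand, the array $A$ is a rectangle of sides $1/N,\dots,1/N,\ell(A)/N$, so Theorem \ref{thm:duration} bounds the duration by $T\leq C\ell(A)$. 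Substituting into \eqref{eq:discr:length},
\begin{equation*}
L_S(\Pi,\Pi_0)=\sum_{j=1}^{T}N^{-1-\frac{\nu}{2}}\sqrt{\text{swap}(S_j)}\leq T\,N^{-1-\frac{\nu}{2}}\sqrt b\leq C\ell(A)\,N^{-1-\frac{\nu}{2}}\sqrt b,
\end{equation*}
and combining with the reduction above gives the corollary.

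The main obstacle I anticipate is making both bounds hold for one and the same flow. Theorem \ref{thm:duration} controls only the duration, not how many cubes are moved at each step, so I cannot simply quote it for the per-step count; conversely, a flow optimised to move few cubes per step need not a priori terminate within $C\ell(A)$ steps. The crux is therefore to pin down a concrete color-sorting scheme—only misplaced black/white adjacent pairs are ever swapped, and each black cube advances toward its target at a definite rate—for which the per-step bound $\min\{b,\ell(A)-b\}$ is visible by inspection while the duration bound $C\ell(A)$ is guaranteed by (the construction underlying) Theorem \ref{thm:duration}. Checking that such a sorting indeed finishes in $O(\ell(A))$ rounds is the one place where a genuine, if standard, argument is required.
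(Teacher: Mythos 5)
Your proposal is correct and follows essentially the same route as the paper: reduce to the canonical coloring with $b\leq\ell(A)-b$, bound each step's swap count by $b$ (since only black/white adjacent pairs are exchanged), and bound the duration by $C\ell(A)$ via Theorem \ref{thm:duration}. The one concern you flag --- that a single flow must satisfy both the per-step and the duration bounds simultaneously --- is a point the paper's proof passes over silently, and your odd--even transposition sort (which sorts a $0$--$1$ array in $\ell(A)$ rounds) is exactly the standard construction that resolves it.
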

\begin{proof}
    Without loss of generality we can assume that $\tilde\Pi$ is the canonical coloring \ref{def:canonical:coloring}. Moreover, we can assume that $b\leq\ell(A)-b$. The proof is a consequence of Theorem \ref{thm:duration}. Take $\lbrace S_j\rbrace_{j=1}^T$ the $S$-discrete flow given by Theorem \ref{thm:duration}. Then at each step any elementary movement $S_j$ moves at most $2b$ subcubes (in particular, only the couples of adjacent cubes with the left one black and the right one white). 
    Since the duration is $T\leq \ell(A)$ by the previous theorem, we have the result. 
\end{proof}

\subsection{New length functional}\label{SS:new:length}
The previous corollary \ref{cor:cost:color} is almost immediate but highlights that the real cost for connecting one coloring to another one is given by the number of 'black cubes'. This is the key idea of our work that allows us to introduce a new Length functional. Its definition and the definition of a new discrete flow is motivated by the results contained in the appendix, where we give an explicit construction of the vector fields involved, which is not strictly relevant to the computation of Shnirelman's length functional.
\begin{definition}\label{def:Exchanging:couple}
   Let $A=[\kappa_1 \kappa_2 \dots \kappa_{\ell(A)}]\subset\mathcal{R}_N$ be an array of length $\ell(A)$, and let $P\in\mathcal{D}_N$ be a permutation of the array $A$. Let $\kappa_{i_1}$, $\kappa_{j_1}$, with $j_1\leq i_1\leq \ell(A)$, be two subcubes of the array. We say that $(\kappa_{i_1},\kappa_{j_1})$ is a \emph{swapping couple}  for the permutation $P$ if
   \begin{equation*}
       P(x)=\begin{cases}
           x+c(\kappa_{i_1})-c(\kappa_{j_1}), \quad x\in \text{int}(\kappa_{j_1}), \\
           x+c(\kappa_{j_1})-c(\kappa_{i_1}), \quad x\in \text{int}(\kappa_{i_1}),\\
           x \quad\text{otherwise},
       \end{cases}
   \end{equation*}
   where $c(\kappa)$ denotes the center of the cube $\kappa$.
\end{definition}
The notion of swapping couples extends the notion of adjacent cubes, so that we can consider swaps of distant cubes (in this case, the permutation $P$ of the previous definition).
\begin{definition}\label{def:sequence:swapping:couple}
    Let $A=[\kappa_1 \kappa_2 \dots \kappa_{\ell(A)}]\subset\mathcal{R}_N$ be an array of length $\ell(A)$. A \emph{sequence of swapping couples} in $A$ is a subset $\lbrace \kappa_{i_j}\rbrace\subset A$ with $j=1,\dots,2M\leq\ell(A)$, and
    \begin{equation*}
    i_1\leq i_2\leq\dots  \leq i_{2M},
    \end{equation*}
    \begin{equation*}
(\kappa_{i_j},\kappa_{i_{2M-j+1}})\qquad\text{swapping couple},\quad\forall j=1,\dots,M.
    \end{equation*}
    
\end{definition}
For simplicity, we will use the notation $\lbrace (\kappa_{i_h},\kappa_{j_h})\rbrace$ for the sequence of swapping couples.
\begin{definition}[Elementary Movement]\label{def:new:elem:movement}
    An elementary movement is a map $Z:[0,1]^\nu\rightarrow[0,1]^\nu$ such that there exist $A_1,\dots,A_m$ disjoint arrays with the following properties:
    for every $1\leq i\leq m$, if $(\kappa_{i_1},\kappa_{j_1}),\dots,(\kappa_{i_{h(i)}},\kappa_{j_{h(i)}})$ denote a sequence of swapping couples of the array $A_i$ (Definition \ref{def:sequence:swapping:couple}), then 
    \begin{equation*}
        Z(x)=\begin{cases}
            x+ c(\kappa_{i_\ell})-c(\kappa_{j_\ell}),\quad \forall x\in \text{int}(\kappa_{j_\ell}),\quad\forall i,\quad \forall 1\leq\ell\leq h(i), \\
              x+ c(\kappa_{j_\ell})-c(\kappa_{i_\ell}),\quad \forall x\in \text{int}(\kappa_{i_\ell}),\quad\forall i,\quad \forall 1\leq\ell\leq h(i),\\
              x\quad\text{otherwise},
        \end{cases}
    \end{equation*}
    where $c(\kappa)$ denotes the center of the cube $\kappa$.
\end{definition}
We denote by $\mathcal{Z}_N$ the set of elementary movements.  We remark that also in this case elementary movements are permutations of subcubes of the tiling $\mathcal{R}_N$. 

   Let $Z\in\mathcal{Z}_N$ be an elementary movement and let $A_1,\dots,A_m$ be the arrays of the previous definition. We assign to $Z$ a cost functional, namely $L:\mathcal Z_N\rightarrow\R_+$ as follows:
   \begin{equation}\label{eq:cost:elem:movem}
       L(Z)= \max_{i}\ell(A_i)N^{-1-\frac{\nu}{2}}\sqrt{\sum_{i=1}^m h(i)},
   \end{equation}
   where we recall that $\ell(A_i)$ is the length of the array $A_i$, for every $i$. The cost of an elementary movement is motivated by Corollary \ref{cor:cost:color}.
   \begin{remark}
       We point out that, given an array $A$ and $\Pi:A\rightarrow\lbrace 0,1\rbrace$ a coloring into black and white cubes, there exists always a sequence of swapping couples $\lbrace (\kappa_{i_h},\kappa_{j_h})\rbrace$ with $h=1,\dots,M$ with $2M\leq \ell(A)$ and $Z$ elementary movement swapping the sequence (Definition \ref{def:new:elem:movement}), such that $Z\circ \Pi$ is the canonical coloring. 
   \end{remark}
\begin{definition}[Discrete flow]\label{def:discrete:flow}
    A discrete flow is a finite sequence $\lbrace Z_k\rbrace _{k=1}^T $ with $Z_k$ elementary movement. $T$ is called the \emph{duration} of the flow.
\end{definition}
Again, Let $P,Q:[0,1]^\nu\rightarrow [0,1]^\nu$ be two permutations.
  We say that $P$ is sent into $Q$ by the discrete flow $\lbrace Z_k\rbrace_{k=1}^T$ if
\begin{equation*}
    Z_T\circ Z_{T-1}\circ \dots Z_2 \circ Z_1\circ P=Q.
\end{equation*}
In this case we say that the discrete flow $\lbrace Z_j\rbrace_{j=1}^T$ \emph{connects} the two permutations $P,Q$.  

\medskip 

We define the  new length functional as
\begin{equation}\label{eq:discr:length:new}
    L\lbrace Z_j\rbrace_{j=1}^T=\sum_{j=1}^{T} L(Z_j),
\end{equation}
and finally 

\begin{equation}\label{eq:distance:new}
    \text{dist}_{\mathcal{D}_N}(P,Q)=\underset{ \text{connecting } P\text{ to }Q}{\min_{\lbrace Z_j\rbrace_{j=1}^{T} }} L\lbrace Z_j\rbrace_{j=1}^T.
\end{equation}
\begin{remark}
    Also in this framework we can compute the cost of colorings.
\end{remark}
Observe that, given $S$ $S$-elementary movement, then it is a elementary movement, where the arrays are the couples of adjacent cubes that need to be swapped. In particular,
\begin{equation*}
    L(S)=2N^{-1-\frac{\nu}{2}}\sqrt{\sum_{i=1}^{\text{swap}(S)}1}=2N^{-1-\frac{\nu}{2}}\sqrt{\text{swap}(S)}.
\end{equation*}
(The second equality is because $h(i)=1$ and $\ell(A_i)=2$). Thanks to this observation, we have that
immediately $$\text{dist}_{\mathcal{D}_N}(P,Q)\leq  2\text{dist}^S_{\mathcal{D}_N}(P,Q).$$ In the following theorem, we prove the equivalence of the two distances. 
\begin{theorem}\label{thm:equivalence:distance}
    There exists a positive constant $C>0$ such that
    \begin{equation*}
        \text{dist}^S_{\mathcal{D}_N}(P,Q)\leq C\text{dist}_{\mathcal{D}_N}(P,Q),\quad\forall P,Q\in\mathcal{D}_N.
    \end{equation*}
\end{theorem}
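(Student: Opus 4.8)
Since every $S$-elementary movement is in particular an elementary movement realizing the same permutation at twice the cost (the computation preceding the statement), one inequality, $\text{dist}_{\mathcal{D}_N}\le 2\,\text{dist}^S_{\mathcal{D}_N}$, is already available. The plan is therefore to prove only the reverse bound, and for this it suffices to show the following single-movement statement: there is a dimensional constant $C$ such that every elementary movement $Z\in\mathcal{Z}_N$ can be \emph{simulated} by a finite $S$-discrete flow $\{S_k\}_{k=1}^{T}$ with $S_T\circ\cdots\circ S_1=Z$ and $L_S\{S_k\}_{k=1}^T\le C\,L(Z)$. Granting this, given any discrete flow $\{Z_j\}$ connecting $P$ to $Q$ I would replace each $Z_j$ by its simulating $S$-flow and concatenate; the result is an $S$-discrete flow connecting $P$ to $Q$ whose $S$-length is $\le C\sum_j L(Z_j)=C\,L\{Z_j\}$. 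Taking the infimum over all $\{Z_j\}$ then yields $\text{dist}^S_{\mathcal{D}_N}(P,Q)\le C\,\text{dist}_{\mathcal{D}_N}(P,Q)$, which is the assertion.

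To prove the single-movement claim, fix $Z$ with its disjoint arrays $A_1,\dots,A_m$ and sequences of swapping couples of lengths $h(1),\dots,h(m)$, and set $\ell:=\max_i\ell(A_i)$ and $H:=\sum_{i=1}^m h(i)$, so that by \eqref{eq:cost:elem:movem} we have $L(Z)=\ell\,N^{-1-\frac{\nu}{2}}\sqrt{H}$. The permutation induced by $Z$ on each array $A_i$ moves only the $2h(i)$ cubes appearing in its swapping couples (it reverses them in the nested pattern of Definition \ref{def:sequence:swapping:couple}) and fixes the remaining cubes of $A_i$. I would realize this one-dimensional permutation by the classical odd--even transposition sort: in each round one performs a family of pairwise disjoint adjacent swaps (on the odd, resp. even, adjacent pairs of the array), the process terminates after at most $\ell(A_i)\le\ell$ rounds, and the \emph{total} number of adjacent swaps it performs equals the number of inversions of the target permutation. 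Because the arrays $A_1,\dots,A_m$ are disjoint, the $k$-th rounds of all arrays act on disjoint cubes and their union is a legitimate $S$-elementary movement $S_k$ (Definition \ref{def:S:movement}); running them in parallel gives a flow of duration $T=\ell$, consistent with Theorem \ref{thm:duration}.

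The two quantitative inputs are then: (i) a bound on inversions, and (ii) a summation via Cauchy--Schwarz. For (i), in $A_i$ the inversions of type (fixed, fixed) vanish, the (moving, moving) ones number at most $\binom{2h(i)}{2}\lesssim h(i)\,\ell$, and each of the $2h(i)$ moving cubes crosses at most $\ell$ fixed cubes, contributing $\lesssim h(i)\,\ell$; hence the number of inversions in $A_i$ is $\lesssim h(i)\,\ell$, and summing, $\sum_k\text{swap}(S_k)=\sum_i(\text{inversions in }A_i)\lesssim H\ell$. For (ii), Cauchy--Schwarz gives
\[
\sum_{k=1}^{T}\sqrt{\text{swap}(S_k)}\;\le\;\sqrt{T}\,\Big(\sum_{k=1}^{T}\text{swap}(S_k)\Big)^{1/2}\;\lesssim\;\sqrt{\ell}\,\sqrt{H\ell}\;=\;\ell\sqrt{H}.
\]
Multiplying by $N^{-1-\frac{\nu}{2}}$ and comparing with \eqref{eq:discr:length} yields $L_S\{S_k\}_{k=1}^T\lesssim \ell\,N^{-1-\frac{\nu}{2}}\sqrt{H}=L(Z)$, as required.

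The step I expect to be the main obstacle is the simulation of a single $Z$ on an array, precisely because the duration and the total swap count must be controlled \emph{simultaneously}: pushing each distant couple to adjacency one at a time keeps the per-step swap count small but lets the duration grow like the sum of travel distances, while a naive parallel reversal keeps the duration $\lesssim\ell$ but can cost $\sim\ell^2$ swaps even when $h(i)$ is tiny. The odd--even transposition sort is exactly the device that reconciles the two, since it achieves duration $\le\ell(A_i)$ \emph{and} a total swap count equal to the inversion number, which is controlled by the number $h(i)$ of couples through estimate (i). Everything else --- the disjointness bookkeeping that turns per-array rounds into valid $S$-elementary movements, and the concatenation/infimum passage from a single movement to full flows --- is routine.
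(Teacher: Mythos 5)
Your proof is correct, and it reaches the same final bound $\ell\,N^{-1-\frac{\nu}{2}}\sqrt{H}$ as the paper, but by a somewhat different and more self-contained mechanism. The paper's proof goes through Lemma \ref{lem:equivalence:distance}: it invokes Theorem \ref{thm:duration} as a black box to get a flow of duration $T\le\ell(A)$, and then bounds the length by the crude per-step estimate $\text{swap}(S_j)\le 2M$ (only the $2M$ labelled cubes ever need to move), so that $\sum_j\sqrt{\text{swap}(S_j)}\le \ell(A)\sqrt{2M}$. You instead exhibit the flow explicitly as an odd--even transposition sort, bound the \emph{total} swap count by the inversion number ($\lesssim H\ell$, using that fixed--fixed pairs are never inverted and hence never swapped), and recover the same estimate via Cauchy--Schwarz over the $\le\ell$ rounds. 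What your route buys is explicitness --- the paper's assertion that the flow from Theorem \ref{thm:duration} only ever swaps pairs containing a moving cube is left implicit, whereas the no-new-inversions property of odd--even sort makes this precise --- and it decouples the duration bound from the swap bound, which is exactly the tension you correctly identify as the crux. What the paper's route buys is brevity: a uniform per-step bound makes Cauchy--Schwarz unnecessary. Your handling of the multi-array disjointness and of the concatenation/infimum passage matches the paper's (which treats the multi-array case by saying the proof is "similar").
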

Before giving the proof of the theorem, we will prove the following
\begin{lemma}\label{lem:equivalence:distance}
    Let us fix an array $A$ of length $\ell(A)$ and consider the following two configurations $i,i'$ on the array $A$:
    \begin{equation*}
        i_A=[1,2,\dots,M,M+1,\dots,\ell(A)-M,\ell(A)-M+1,\dots,\ell(A)-1,\ell(A)]
    \end{equation*}
    \begin{equation*}
        i'_A=[\ell(A),\ell(A)-1,\dots,\ell(A)-M+1, M+1,M+2,\dots, M,M-1 \dots,1].
    \end{equation*}
    Then there exists a sequence $\lbrace S_j\rbrace _{j=1}^T$ of $S$-elementary movements with $i=i'\circ S_T\circ S_{T-1}\circ\dots\circ S_1$ and a positive constant $C$ depending only on the dimension such that $L_S\lbrace S_j\rbrace_{j=1}^T\leq \sqrt{2}\ell(A)\sqrt{M}N^{-1-\frac{\nu}{2}}$.  
\end{lemma}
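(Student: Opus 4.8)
The plan is to recognize that the two configurations $i_A,i_A'$ differ by the involution $\pi$ of the array that swaps the cube in position $k$ with the cube in position $\ell(A)+1-k$ for each $k=1,\dots,M$, while fixing every middle cube in positions $M+1,\dots,\ell(A)-M$. Indeed, reading off $i_A'$ position by position, the top block $[\ell(A),\dots,\ell(A)-M+1]$ is filled by the $M$ rightmost cubes (in reversed order), the bottom block $[M,\dots,1]$ by the $M$ leftmost cubes (in reversed order), and the middle block is left untouched; this is exactly the family of $M$ nested swapping couples $(\kappa_{i_j},\kappa_{i_{2M-j+1}})$ of Definition \ref{def:sequence:swapping:couple}. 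Since $\pi=\pi^{-1}$, realizing $i=i'\circ S_T\circ\dots\circ S_1$ amounts to producing a sequence of $S$-elementary movements whose composition equals $\pi$, and the whole content is the cost bound.

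First I would build the flow as an explicit odd--even transposition (bubble) schedule. Assign to each of the $\ell(A)$ cubes the \emph{key} equal to its target position under $\pi$, and then run odd--even transposition sort on these keys: at odd stages one performs the disjoint adjacent swaps on pairs $(2r-1,2r)$, at even stages on pairs $(2r,2r+1)$, in each case swapping a pair only when its two keys are out of increasing order. Each stage is, by construction, a collection of pairwise disjoint swaps of adjacent subcubes, hence a legitimate $S$-elementary movement, and the composition of all stages sorts the keys into increasing order, i.e. sends $i'$ to $i$. The classical fact that odd--even transposition sort orders $\ell(A)$ keys in at most $\ell(A)$ parallel stages gives the duration bound $T\le \ell(A)$.

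For the cost I would exploit that only the $2M$ \emph{active} cubes (the two outer blocks) ever have a key different from their current position: the middle cubes have key equal to their own index, so two middle cubes always have their keys in correct relative order and are therefore \emph{never} swapped against one another at any stage (an active cube passing a middle cube does not alter the mutual order of two middle cubes). Consequently every adjacent swap actually performed in any $S_j$ involves at least one of the $2M$ active cubes; since the swaps inside a single $S_j$ are disjoint and each active cube lies in at most one of them, we get $\text{swap}(S_j)\le 2M$ for every $j$. Summing over the at most $\ell(A)$ stages,
\begin{equation*}
L_S\{S_j\}_{j=1}^{T}=\sum_{j=1}^{T}N^{-1-\frac{\nu}{2}}\sqrt{\text{swap}(S_j)}\le \ell(A)\,N^{-1-\frac{\nu}{2}}\sqrt{2M}=\sqrt{2}\,\ell(A)\sqrt{M}\,N^{-1-\frac{\nu}{2}},
\end{equation*}
which is the asserted bound, with the constant depending only on $\nu$ (indeed, on nothing beyond the normalization).

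The step I expect to be the main obstacle is making the scheduling simultaneously tight in both parameters: one must be sure that the number of stages does not exceed $\ell(A)$ \emph{and} that each stage touches at most $2M$ active cubes, since loosening either bound degrades the clean constant $\sqrt{2}$. In particular I would verify carefully that the middle block returns exactly to its original placement — this is the heuristic "net zero shift", each middle cube being crossed by exactly $M$ left-moving and $M$ right-moving active cubes — and that the within-block reversals forced by $\pi$ (the leftmost active cube must overtake the others) are correctly produced by the out-of-order swap rule rather than requiring extra stages; writing out the wave explicitly is what secures these two facts.
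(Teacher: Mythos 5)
Your proof is correct, and it is worth noting how it relates to the paper's own argument. The paper's proof has the same two-part shape as yours --- duration at most $\ell(A)$, at most $2M$ simultaneous swaps per stage --- but it obtains the flow by citing Theorem \ref{thm:duration} as a black box and then simply \emph{asserts} that each stage swaps only pairs involving one of the $2M$ labelled cubes, with no justification that the flow produced by that theorem actually has this property. You instead build the flow explicitly as an odd--even transposition sort, which buys two things: the duration bound $T\le\ell(A)$ comes with no hidden constant (the paper's Theorem \ref{thm:duration} carries a constant $C$ that would formally spoil the clean $\sqrt{2}$ in the statement), and the per-stage bound $\mathrm{swap}(S_j)\le 2M$ is genuinely proved via your monotonicity observation that two middle cubes, having keys in their original relative order, are never compare-exchanged against one another, so every executed swap contains an active cube and the disjointness of swaps within a stage caps their number at $2M$. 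That observation is the one non-trivial point and you handle it correctly. The only cosmetic remark is that the identity stages of the sorting network (stages where no pair is out of order) contribute zero cost, so including or discarding them does not affect the bound; and the direction of the composition $i=i'\circ S_T\circ\dots\circ S_1$ is immaterial here since the permutation relating $i$ and $i'$ is an involution, as you note. Your argument is a strict strengthening of the paper's proof and could replace it.
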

\begin{proof}
Similarly to the proof of Corollary \ref{cor:cost:color}, by Theorem \ref{thm:duration}, we know that the duration of $\lbrace S_j\rbrace_{j=1}^T$ does not exceed $\ell(A)$. Moreover, at each step, the maximum amount of simultaneous swapping that can be performed is $\text{swap}(S_j)\leq 2M$ (since we need to swap only the cubes that are labelled in the statement).  
\end{proof}
\begin{proof}[Proof of Theorem \ref{thm:equivalence:distance}]
    Let $Z$ be any elementary movement and let $(\kappa_{i_h},\kappa_{j_h})$ $h=1,\dots,M$ be a sequence of swapping couples, that are swapped by $Z$. We need to prove that there exists a discrete $S$-flow $\lbrace S_j\rbrace_{j=1}^T$ such that $Z\circ\sigma=S_T\circ S_{T-1}\circ\dots \circ S_1\circ \sigma$ and $L_S\lbrace S_j\rbrace_{j=1}^T\leq C L\lbrace Z\rbrace$. Then the proof is an easy consequence of Lemma \ref{lem:equivalence:distance} in the case $Z$ swaps the couples in a single array $A$. The proof is similar in the case of multiple arrays.
\end{proof}

The introduction of this equivalent distance establishes the setting under which we can improve the $\alpha$-exponent in Shnirelman's inequality. We point out that, though it is not necessary for the computations of next section, the formal justification for the use of this distance (which is equivalent to the one of Shnirelman) lies in the idea that distant cubes can be swapped through tube structures (see Appendix \ref{appendix}).

\section{Improved Shnirelman's Inequality}\label{S:improved:shnirelman}

In the previous section we presented the setting under which we will achieve our main result. We point out that our proof draws inspiration from that one in \cite{Shnirelman} (differently from the one proposed in \cite{Shnirelman2} via generalized flows), though some steps are performed differently. Moreover, the introduction of the new length functional \ref{eq:discr:length:new} allows to obtain better estimates.

\medskip

The aim of this section is to prove the following
\begin{theorem}[Improved Shnirelman's discrete inequality]\label{thm:ineq:discrete}
    Let $\nu\geq 3$, then there exists a positive constant $C=C(\nu)>0$, and $\alpha\geq \frac{1}{1+\nu}$ such that, for every $N\in\N$, for every $P,Q$ permutations of some tiling $\mathcal{R}_N$ of $[0,1]^\nu$, it holds
    \begin{equation}
        \text{dist}_{\mathcal{D}_N}(P,Q)\leq C\|P-Q\|_2^\alpha.
    \end{equation}
    Moreover, if $\nu=2$, $\alpha$ can be chosen to be $\alpha\geq\frac{2}{7}.$
\end{theorem}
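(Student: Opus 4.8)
The plan is to reduce to the case $Q=\mathrm{Id}$ and then to concatenate the three steps recorded in Proposition \ref{prop:Step1}, Theorem \ref{thm:problem:step:2} and Theorem \ref{thm:step:3}, optimizing at the end over the free parameter $\epsilon$. First I would set $R:=Q\circ P^{-1}\in\mathcal D_N$. Since elementary movements act on the left, a discrete flow $\{Z_j\}_{j=1}^T$ sends $P$ into $Q$ exactly when it sends $\mathrm{Id}$ into $R$ (both conditions read $Z_T\circ\dots\circ Z_1=Q\circ P^{-1}$), whence $\mathrm{dist}_{\mathcal D_N}(P,Q)=\mathrm{dist}_{\mathcal D_N}(\mathrm{Id},R)=\mathrm{dist}_{\mathcal D_N}(R,\mathrm{Id})$, the last equality because each elementary movement is an involution, so reversing a flow costs the same. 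Moreover, since $P$ is measure preserving, the change of variables $y=P^{-1}(x)$ gives $\|R-\mathrm{Id}\|_2=\|P-Q\|_2=:\delta$. Thus it suffices to bound $\mathrm{dist}_{\mathcal D_N}(R,\mathrm{Id})$ in terms of $\delta$, and I rename $R$ as $P$. When $\delta\geq\delta_0$ for a fixed $\delta_0>0$ the inequality is trivial, because the diameter of $\mathcal D_N$ is bounded uniformly in $N$ (any permutation reaches $\mathrm{Id}$ in duration $\lesssim N$ by Theorem \ref{thm:duration}, with each movement controlled through \eqref{eq:cost:elem:movem}); hence I may assume $\delta\ll1$.

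Next I would chain the three steps. Fixing $\epsilon\in(0,\tfrac{2}{2+\nu})$, Proposition \ref{prop:Step1} applied to $P$ yields $\tilde P_1$ with $|\tilde P_1(\kappa)-\kappa|\leq D\delta^{\epsilon}$ for all $\kappa$, with $\|\tilde P_1-\mathrm{Id}\|_2\leq C\delta^{1-\epsilon/2}$, at cost $L(P,\tilde P_1)\lesssim\delta^{1-\epsilon}$. These are precisely the hypotheses of Theorem \ref{thm:problem:step:2}, which applied to $\tilde P_1$ produces $\tilde P_2$ constant on $\mathcal R_{\delta^{\epsilon}}$ at cost $L(\tilde P_1,\tilde P_2)\lesssim\max\{\delta^{1/2-3\epsilon/4},\delta^{1/6+7\epsilon/12}\}$, and Theorem \ref{thm:step:3} then gives $\mathrm{dist}_{\mathcal D_N}(\tilde P_2,\mathrm{Id})\lesssim\delta^{\epsilon}$. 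The triangle inequality for the distance yields
\begin{equation*}
\mathrm{dist}_{\mathcal D_N}(P,\mathrm{Id})\lesssim\delta^{1-\epsilon}+\max\{\delta^{1/2-3\epsilon/4},\,\delta^{1/6+7\epsilon/12}\}+\delta^{\epsilon}.
\end{equation*}

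Finally I would optimize over $\epsilon$. Since $\delta\ll1$, the right-hand side is $\lesssim\delta^{\alpha(\epsilon)}$ with $\alpha(\epsilon)=\min\{1-\epsilon,\ \tfrac12-\tfrac34\epsilon,\ \tfrac16+\tfrac7{12}\epsilon,\ \epsilon\}$. The first two exponents decrease and the last two increase in $\epsilon$, so the maximum of $\alpha$ is attained where a decreasing and an increasing exponent cross. For $\nu=2$ the admissible range contains $\tfrac27$, the binding pair is $\epsilon$ and $\tfrac12-\tfrac34\epsilon$, and equating them gives $\epsilon=\tfrac27$; at this value $\epsilon=\tfrac12-\tfrac34\epsilon=\tfrac27$ while $1-\epsilon=\tfrac57$ and $\tfrac16+\tfrac7{12}\epsilon=\tfrac13$ are larger, so $\alpha=\tfrac27$, as claimed. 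For $\nu\geq3$ the same concatenation applies with the dimension-dependent form of the Step 2 estimate and with the tighter constraint $\epsilon\leq\tfrac2{2+\nu}$; carrying out the analogous balancing yields $\alpha\geq\tfrac1{1+\nu}$.

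I expect the real obstacle to lie not in the bookkeeping above but in Theorem \ref{thm:problem:step:2}: it is the most expensive of the three steps, its proof requires the trajectory-counting volume estimate rather than a direct coloring argument, and it is the step whose exponent degrades with the dimension, so it is what ultimately fixes the value of $\alpha$. A secondary point requiring care is the compatibility of hypotheses across steps, namely that the output of Proposition \ref{prop:Step1} satisfies both the $L^2$ bound $\|\tilde P_1-\mathrm{Id}\|_2\leq\delta^{1-\epsilon/2}$ and the displacement bound $|\tilde P_1(\kappa)-\kappa|\leq\delta^{\epsilon}$ demanded by Theorem \ref{thm:problem:step:2}, which is what forces the constraint $\epsilon\leq\tfrac{2}{2+\nu}$ in the first place.
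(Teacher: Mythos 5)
Your proposal follows the same route as the paper: reduce to $Q=\mathrm{Id}$, concatenate Proposition \ref{prop:Step1}, Theorem \ref{thm:problem:step:2} and Theorem \ref{thm:step:3} via the triangle inequality, and maximize the resulting exponent $\min\lbrace 1-\epsilon,\ \tfrac12-\tfrac{3\epsilon}{4},\ \tfrac16+\tfrac{7\epsilon}{12},\ \epsilon\rbrace$ over the admissible range of $\epsilon$. The reduction to the identity, the dismissal of the case $\delta\gtrsim 1$ via Theorem \ref{thm:duration}, and the $\nu=2$ optimization (the crossing of $\epsilon$ with $\tfrac12-\tfrac34\epsilon$ at $\epsilon=\tfrac27$, which is admissible) all match the paper's argument and are correct.

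The one genuine discrepancy is in the case $\nu\geq3$. The constraint you invoke, $\epsilon\leq\tfrac{2}{2+\nu}$ (which indeed comes from Remark \ref{rmk:cound:eps} and from the hypotheses of Theorem \ref{thm:problem:step:2}), does not produce the exponent $\tfrac{1}{1+\nu}$: balancing under that constraint alone gives $\alpha=\min\lbrace\tfrac27,\tfrac{2}{2+\nu}\rbrace$, which for $\nu=3,4,5$ equals $\tfrac27$ and is strictly larger than $\tfrac{1}{1+\nu}$, so ``carrying out the analogous balancing'' as you describe it does not land on the claimed exponent. The binding constraint in the paper is instead $\epsilon\leq\tfrac{1}{\nu+1}$, imposed by Assumption A inside Corollary \ref{cor:cost:distant:cubes}: the volume of colored cubes with long displacement in a slice must fit inside a single cube of the coarse tiling, i.e. $\delta^{1-\epsilon}\leq\delta^{\epsilon\nu}$. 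This hypothesis is needed in the proof of Theorem \ref{thm:problem:step:2} even though it is not visible in that theorem's statement. Since $\tfrac{1}{\nu+1}<\tfrac27$ for $\nu\geq3$ and the binding branch $\delta^{\epsilon}$ is increasing in $\epsilon$, the optimum is pushed to the boundary $\epsilon=\tfrac{1}{\nu+1}$, which is precisely where $\alpha=\tfrac{1}{1+\nu}$ comes from. Your write-up should either import this hidden hypothesis explicitly or observe that, taking the statement of Theorem \ref{thm:problem:step:2} at face value, one would conclude a stronger bound than the one actually proved in the paper.
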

\medskip

First, we observe that in the proof of the theorem we can assume $Q=Id$, since $\text{dist}_{\mathcal{D}_N}(P,Q)=\text{dist}_{\mathcal{D}_N}(P\circ Q^{-1},Id)$. Next, we can assume (as done in \cite{Shnirelman},\cite{Shnirelman2}) that $\|P-Id\|_{L^2}\leq \delta$, where $\delta<<1$ and fixed at the beginning. 
Moreover, we fix $\epsilon>0$ and $\epsilon <<1$ to be chosen later. We will use $\epsilon$ since we will work with a less finer tiling $\mathcal{R}_{\delta^{-\epsilon}}$. We will denote by $K$ the bigger subcubes of this new tiling.

\begin{remark}\label{rmk:cound:eps}
    Observe that, once we have inequality $\|P-Id\|_2^2\leq\delta^2$, then we use that
    \begin{equation*}
        N^{-2}N^{-\nu}\lesssim \|P-Id\|_2^2\leq\delta^2,
    \end{equation*}
    since, if $P$ is not the identity (in that case Shnirelman's inequality would be trivial), then it must be at least a transposition of adjacent cubes. In particular, we can use the tiling $\mathcal{R}_{\delta^{-\epsilon}}$  if $\delta^\epsilon\geq N^{-1}$, that is $N^{-\epsilon\left(1+\frac{\nu}{2}\right)}\geq N^{-1}$, which yields $\epsilon\leq\frac{2}{2+\nu}$. We will see that this bound will be of key importance also in the following steps of the proof. 
\end{remark}
\medskip


In particular, if we fix $\epsilon>0$ at the beginning, chosen accordingly to the bounds of the previous remark, we can assume that $2^{m+1}N^{-1}>\delta^\epsilon>{2^m}N^{-1}$ with $m\in\N$. Similarly, since $\|P-Id\|_{L^2}\leq \delta'$ for every $\delta'>\delta$, we can also assume that $(\delta')^\epsilon=2^{m+1}N^{-1}.$ With these considerations, we are justified in taking a tiling $\mathcal{R}_{\delta^{-\epsilon}}$ with the property that $\delta^\epsilon N=2^m$. Since we can also assume $N=2^n$ for some $n\in\N$, we can have that $\delta^{-\epsilon}\in\N$.

\medskip
Before giving the proof of the theorem, we state and prove some preliminary results.

\subsection{Cost of Colorings}\label{S:cost:coloring}

 In the next theorem, we prove that, given two colorings $\Pi,\tilde\Pi$ w.r.t. the tiling $\mathcal{R}_N$  in black and white cubes (Definition \ref{def:coloring}) of a cube of side $\delta^\epsilon$, with the property that the number of black cubes of the two colorings is the same, then the cost for moving one coloring into the other one is $L(\Pi,\tilde \Pi)\sim \sqrt{\min({b, \delta^{\epsilon\nu}N^\nu}-b)N^{-\nu}}\delta^\epsilon$, that is given, up to some constant depending on the dimension of the space, by the squared volume of the total black cubes (or the white cubes, depending which cubes represent the majority) times the size of the cube where the colorings are defined (in this case $\delta^\epsilon)$.
 
 \medskip
 
 More precisely,

\begin{theorem}\label{thm:cost:nucube}
   Let $K$ be a cube of the tiling $\mathcal{R}_{\delta^{-\epsilon}}$ and assume to have $\Pi,\tilde \Pi: K\rightarrow\lbrace 0,1\rbrace$ two colorings in black and white cubes such that $b=\sharp\lbrace \kappa\subset K,\kappa\in\mathcal{R}_N:\Pi(\kappa)=1\rbrace=\lbrace \kappa\subset K,\kappa\in\mathcal{R}_N:\tilde\Pi(\kappa)=1\rbrace$. Then there exists a positive constant $C=C(\nu)>0$, not depending on $\Pi,\tilde \Pi, N, K$, such that
   \begin{equation*}
       L(\Pi,\tilde \Pi)\leq C(\nu)\sqrt{\min({b, \delta^{\epsilon\nu}N^\nu}-b)}\delta^\epsilon.
   \end{equation*}
\end{theorem}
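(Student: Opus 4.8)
The plan is to prove this as the exact $\nu$-dimensional analogue of Corollary \ref{cor:cost:color}, routing the coloring with the Duration Theorem \ref{thm:duration} applied to the cube $K$ and then converting back to the new functional. Write $L:=\delta^\epsilon N$ for the number of cubes of $\mathcal R_N$ along each edge of $K$, so that $K$ contains exactly $L^\nu=\delta^{\epsilon\nu}N^\nu$ small cubes (recall from the standing assumptions that $\delta^\epsilon N\in\N$). Up to exchanging the two colors we may assume $b\le L^\nu-b$, so that $\min\{b,L^\nu-b\}=b$; this is legitimate because the cost is symmetric, every elementary movement being an involution, so any connecting flow can be reversed at the same cost. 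It is therefore enough to exhibit one cheap flow sending $\Pi$ to $\tilde\Pi$.

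First I would realize $\Pi,\tilde\Pi$ by permutations of $K$ and invoke Theorem \ref{thm:duration}. Since $K$ is a cube all of whose sides equal $L/N$, the theorem produces an $S$-discrete flow $\{S_j\}_{j=1}^T$ connecting the two colorings whose duration obeys $T\le C(\nu)\max\{L,\dots,L\}=C(\nu)L$. The second ingredient is a bound on the number of simultaneous swaps at each time step: exactly as in the proof of Corollary \ref{cor:cost:color}, only adjacent couples whose two cubes carry different colors actually alter the coloring, and the couples swapped at a fixed step are disjoint, so each consumes one black and one white cube. This gives
\begin{equation*}
    \text{swap}(S_j)\le\min\{b,L^\nu-b\}=b,\qquad j=1,\dots,T.
\end{equation*}
Inserting both estimates into the Shnirelman length \eqref{eq:discr:length} yields
\begin{equation*}
    L_S(\Pi,\tilde\Pi)=\sum_{j=1}^T N^{-1-\frac{\nu}{2}}\sqrt{\text{swap}(S_j)}\le C(\nu)\,L\,N^{-1-\frac{\nu}{2}}\sqrt{b},
\end{equation*}
and since $L=\delta^\epsilon N$ we have $L\,N^{-1-\frac{\nu}{2}}=\delta^\epsilon N^{-\frac{\nu}{2}}\le\delta^\epsilon$, hence $L_S(\Pi,\tilde\Pi)\le C(\nu)\delta^\epsilon\sqrt{b}$. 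Finally the very same $S$-flow is an admissible discrete flow for the new functional, with $L\{S_j\}=2L_S\{S_j\}$ (each $S$-movement is an elementary movement acting on arrays of length $2$), so that $L(\Pi,\tilde\Pi)\le 2L_S(\Pi,\tilde\Pi)\le C(\nu)\delta^\epsilon\sqrt{\min\{b,L^\nu-b\}}$, which is the claim once we recall $L^\nu=\delta^{\epsilon\nu}N^\nu$.

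The step I expect to require the most care is the per-step swap bound: Theorem \ref{thm:duration} is stated for permutations, so I must argue that the connecting flow can be chosen to swap only differently-colored adjacent couples without inflating the duration past $C(\nu)L$. This is precisely where the reasoning of Corollary \ref{cor:cost:color} has to be lifted from a one-dimensional array to a full $\nu$-dimensional cube, and it is the only delicate point; the remaining arithmetic (the identity $L=\delta^\epsilon N$ and the passage from $L_S$ to $L$) is routine. I note that the scheme in fact delivers the sharper bound $\delta^\epsilon N^{-\nu/2}\sqrt{\min\{b,L^\nu-b\}}$ anticipated in the discussion preceding the statement, the harmless factor $N^{-\nu/2}\le 1$ being dropped to reach the cleaner form recorded in the theorem.
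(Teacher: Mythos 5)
Your proof is correct, but it follows a genuinely different route from the paper's. The paper proves Theorem \ref{thm:cost:nucube} by an explicit construction: it partitions $K$ into vertical and horizontal stripes, first redistributes the black cubes so that each horizontal stripe carries the same number of them (with a case distinction according to whether $b<N\delta^\epsilon$ and whether $b/(N\delta^\epsilon)$ is an integer, the latter handled by introducing auxiliary ``yellow'' cubes), then pushes all colored cubes into the last vertical stripe, reducing both $\Pi$ and $\tilde\Pi$ to a common canonical configuration in $O(1)$ elementary movements each of cost $\lesssim\delta^\epsilon N^{-\nu/2}\sqrt b$. You instead treat the Duration Theorem \ref{thm:duration} as a black box on the cube $K$ (duration $\le C\delta^\epsilon N$) and combine it with the pruning argument of Corollary \ref{cor:cost:color}: discard from each $S$-elementary movement the swaps of equally-colored couples, observe that the pruned flow induces the same evolution of colorings since such swaps fix the intermediate coloring, and bound the surviving swaps per step by $\min\{b,L^\nu-b\}$ because each consumes one black and one white position disjointly. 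This is exactly the one-dimensional argument of Corollary \ref{cor:cost:color} lifted to the cube, which the paper itself flags as the right intuition (``this theorem is the multi-dimensional version of Corollary \ref{cor:cost:color}'') but does not use. Your version is shorter, avoids the case analysis entirely, and both arguments yield the same quantitative bound $C\delta^\epsilon N^{-\nu/2}\sqrt{\min\{b,L^\nu-b\}}$, sharper than the stated one by the factor $N^{-\nu/2}$. What the explicit construction buys, and the reason the author likely prefers it, is fine-grained control over the trajectories of individual cubes (in particular how far white cubes travel), which is needed when the same scheme is re-used inside Proposition \ref{prop:Step1}; the non-constructive Duration Theorem gives no such control. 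One minor point: your justification of the reduction to $b\le L^\nu-b$ via reversibility of the flow is aimed at the wrong symmetry (it addresses exchanging $\Pi$ with $\tilde\Pi$, not exchanging the two colors), but this is harmless since your per-step bound already produces $\min\{b,L^\nu-b\}$ directly without any such reduction.
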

\begin{remark}
    This theorem is the multi-dimensional version of Corollary \ref{cor:cost:color}.
\end{remark}

\begin{proof}
    This proof in inspired by \cite{Shnirelman}. We give the proof for $\nu=2$ being the extension to higher dimension straightforward.  Without loss of generality we can assume that the number of black cubes is less than the number of white cubes. Partition the cube $K$ into $\delta^\epsilon N$ horizontal stripes $H_i$ and into $\delta^\epsilon N$ vertical stripes $V_j$. Let $n_j$ be the number of black subcubes in the stripe $V_j$. 

    \medskip
    
    \textbf{Step 1}
    We first give the proof assuming that $b<N\delta^\epsilon$. It is easy to prove that there exists a coloring $\Pi'$ with the same amount of black and white cubes as $\Pi$ such that the number $n_j$ of black cubes into the stripes $V_j$ is the same, but in each horizontal stripe $H_i$ the number of black cubes is less or equal then $1$. The cost for performing this elementary movement (that we will call for simplicity $Z_1$) is
    \begin{equation*}
        L(Z_1)\lesssim N^{-2}\sqrt{b}\delta^\epsilon.
    \end{equation*}
   This flow acts simultaneously and independently on each vertical stripe $V_j$ (see Figure \ref{fig:cost:coloring:1}).
   \begin{figure}
       \centering
       \includegraphics[scale=0.65]{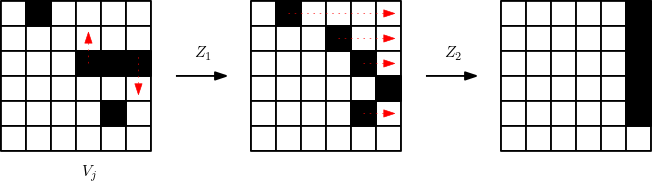}
       \caption{The elementary movements act simultaneously and independently on the stripes $V_j$ first and then on the stripes $H_i$.}
       \label{fig:cost:coloring:1}
   \end{figure}
   Then we send $\Pi'$ into $\Pi''$ where all the black subcubes are moved to the right through the elementary movement $Z_2$. The cost here is again
    \begin{equation*}
        L(Z_2)\lesssim N^{-2}\sqrt{b}\delta^\epsilon.
    \end{equation*}
    At the end only the last vertical stripe $V_{\delta^\epsilon N}$ contains black cubes. We apply the same procedure to $\tilde \Pi$ and then we find an elementary movement $Z_3$ acting only on the last vertical stripe $V_{\delta^\epsilon N}.$ 
    Therefore,
    \begin{equation*}
        L(\Pi,\tilde\Pi)\lesssim N^{-2}\sqrt{b}\delta^\epsilon.
    \end{equation*}
    \medskip
    \textbf{Step 2.}
    For the general case we consider the number
    \begin{equation*}
        M= \frac{b}{N\delta^\epsilon}.
    \end{equation*}
    
    We first assume that this number is integer, and then we will see how to perform the proof in the case it is not. \\
    
    \textbf{Case $M\in\N$.} As before, it is easy to see that, given $\Pi$, there exists a coloring $\Pi'$ (with the same amount of black and white cubes) such that $n_j$ is the same in each vertical stripe $V_j$ but in each horizontal stripe $H_i$ there is the same amount of black subcubes $M$. The conclusion holds exactly as before and clearly it holds that $L(\Pi,\tilde \Pi)\lesssim N^{-2}\sqrt{b}\delta^\epsilon$.

    \medskip

    \textbf{Case $M\not\in\N$.} This is a more delicate case, although the reasoning is almost identical to the previous one. We choose $y$ yellow squares among the white ones with 
    \begin{equation*}
       y= N\delta^\epsilon\left(\biggl\lfloor \frac{b}{N\delta^\epsilon}\biggr\rfloor+1\right)-b,
    \end{equation*}
    where we have denoted by $\lfloor\cdot \rfloor$ the integer part.
    Clearly if $b<y$ we are in the situation of Step 1, so that we can assume the converse.
    We take $\Pi$ in the coloring $\Pi'$ as before considering, for this step, the yellow cubes as black cubes. Then we send $\Pi'$ into $\Pi''$ where all the colored subcubes (black and yellow) are moved to the right (see for clarity Figure \ref{fig:col:square}). Call $R(\epsilon)$ the rectangle of colored subcubes and we observe that if we do the same with $\tilde\Pi$ we arrive to the same rectangle $\tilde R(\epsilon)$. So we can consider $R(\epsilon)$ and $\tilde R(\epsilon)$ as two new colorings into black and yellow subcubes with the number of yellow cubes given by $y$. In this case $y< N\delta^\epsilon$ and we apply the first step of the proof on the rectangle $R(\epsilon)$, since the length of the vertical stripes is $\delta^\epsilon$. Clearly, the cost functional is computed similarly as
    $L(\Pi,\tilde \Pi)\lesssim N^{-2}\sqrt{b}\delta^\epsilon$.
    \begin{figure}
        \centering
        \includegraphics[scale=0.6]{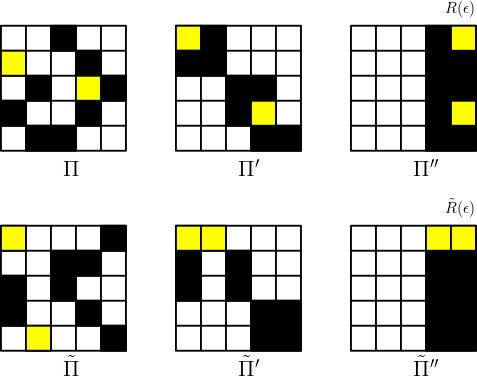}
        \caption{The case $M\not\in N$}
        \label{fig:col:square}
    \end{figure}

    \medskip
     We point out that in the $\nu$-dimensional case the cost is computed as $L(\Pi,\tilde \Pi)\lesssim N^{-1-\frac{\nu}{2}}\sqrt{b}\delta^\epsilon$.
\end{proof}
In particular, if we need to compute the cost of coloring for more general rectangles, it immediately follows that
\begin{corollary}\label{cor:cost:coloring:rectangles}
    Let $R$ be a rectangle of sides $\frac{M_1}{N},\frac{M_2}{N},\dots,\frac{M_\nu}{N}$, and let $\Pi,\tilde\Pi:R\rightarrow\lbrace{0,1}\rbrace$ be two colorings of $R$ into black and white cubes such that $b=\sharp\lbrace \kappa\subset R,\kappa\in\mathcal{R}_a:\Pi(\kappa)=1\rbrace=\lbrace \kappa\subset R,\kappa\in\mathcal{R}_a:\tilde\Pi(\kappa)=1\rbrace$. Then 
    \begin{equation*}
       L(\Pi,\tilde \Pi)\lesssim \sqrt{\min({b, \delta^{\epsilon\nu}N^\nu}-b)}\max\left\lbrace \frac{M_1}{N},\frac{M_2}{N},\dots,\frac{M_\nu}{N}\right\rbrace.
   \end{equation*}
\end{corollary}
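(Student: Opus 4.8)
The plan is to reduce directly to the argument already carried out for a cube in Theorem \ref{thm:cost:nucube}, observing that the only features of that proof which enter the final estimate are the \emph{lengths} of the stripes along which the elementary movements act and the \emph{total number} of swapping couples they involve. Neither of these is particular to a cube, so the whole scheme transfers to a rectangle once we keep track of how the side lengths $\frac{M_1}{N},\dots,\frac{M_\nu}{N}$ replace the single side $\delta^\epsilon$.

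First I would reduce to the case in which the black cubes are the minority; if instead the white cubes are fewer, one interchanges the two colors and argues symmetrically, which is exactly why a $\min$ appears in the statement. Write $|R|=M_1\cdots M_\nu$ for the number of subcubes of $R$ and set $q=\min(b,|R|-b)$. As in the proof of Theorem \ref{thm:cost:nucube}, I then partition $R$ into stripes along each coordinate direction: the stripes parallel to the $j$-th axis are arrays of length $M_j$, and for fixed $j$ the $\prod_{k\neq j}M_k$ parallel stripes are pairwise disjoint, so a single elementary movement may act on all of them simultaneously (Definition \ref{def:new:elem:movement}).

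Next I would run the same finite sequence of rearrangements as in the theorem: equidistribute the minority cubes among the stripes in one direction — including the ``yellow cube'' padding trick of Step~2 of Theorem \ref{thm:cost:nucube} when $q$ is not divisible by the relevant stripe length — and then push all of them to a single face; applying the identical scheme to $\tilde\Pi$ brings both colorings to the same canonical configuration. Because each elementary movement acts on mutually parallel stripes, every array it uses has length at most $\max_i M_i$, while across the bounded number of phases the total count of swapping couples is $\lesssim q$. Substituting these two facts into the cost functional \eqref{eq:cost:elem:movem}, each elementary movement $Z$ obeys
\begin{equation*}
L(Z)=\max_i \ell(A_i)\,N^{-1-\frac{\nu}{2}}\sqrt{\textstyle\sum_i h(i)}\ \lesssim\ \big(\max_i M_i\big)N^{-1-\frac{\nu}{2}}\sqrt{q}\ =\ \sqrt{q\,N^{-\nu}}\,\max_i \frac{M_i}{N},
\end{equation*}
and summing over the finitely many phases reproduces the bound of Theorem \ref{thm:cost:nucube} with the factor $\delta^\epsilon$ replaced by $\max_i\frac{M_i}{N}$. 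Finally, since $X\mapsto\min(b,X-b)$ is nondecreasing and $|R|=\prod_i M_i\leq\delta^{\epsilon\nu}N^\nu$ whenever $R$ is contained in a cube of $\mathcal{R}_{\delta^{-\epsilon}}$, one has $q\leq\min(b,\delta^{\epsilon\nu}N^\nu-b)$, which yields the claimed form.

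The genuinely new bookkeeping compared with the cube case is only that the stripe lengths now depend on the direction, so one must check that the expensive array length is governed by $\max_i M_i$ rather than by a single fixed side. I expect this to be the main — though still routine — point, since it is precisely what converts the factor $\delta^\epsilon$ of the cube into the factor $\max_i\frac{M_i}{N}$ here; everything else is a verbatim repetition of the stripe-by-stripe construction of Theorem \ref{thm:cost:nucube}.
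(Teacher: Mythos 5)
Your proposal is correct and follows essentially the same route as the paper, which in fact offers no separate argument for this corollary at all — it simply asserts that the rectangle case "immediately follows" from Theorem \ref{thm:cost:nucube}, and your write-up is precisely the fleshed-out version of that reduction (stripes of length $M_j$ in each direction, a bounded number of phases each involving $\lesssim\min(b,|R|-b)$ swapping couples, and the cost functional \eqref{eq:cost:elem:movem} turning the longest stripe into the factor $\max_i M_i/N$). Your bound even carries the extra $N^{-\nu/2}$ normalization appearing in the discussion preceding Theorem \ref{thm:cost:nucube}, which is stronger than, and hence implies, the stated inequality.
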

Here we prove another important result, that will be useful in the following. 

\subsection{Proof of Theorem \ref{thm:shn:discr}}\label{ss:main:thm}
In this subsection we give the proof of the main theorem. We divide the proof into several steps, before giving the last statement. As pointed out in the introduction, the idea is to simplify the starting permutation $P$ in such a way that it is constant on the less finer tiling $\mathcal{R}_{\delta^{-\epsilon}}$ (content of Proposition \ref{prop:Step1} and Theorem \ref{thm:problem:step:2}). We also remark that the proofs are carried out in dimension $\nu=2$, since the extension to higher dimension is straightforward. In any case, the computation of the cost for performing the various steps will be given in any dimension $\nu$. 

\medskip

 From now on we will fix a permutation $P$ of some tiling $\mathcal{R}_N$ with $N\in\N$ with the property that $\|P-Id\|_2\leq \delta$, with $\delta<<1$. We choose $\epsilon>0$ accordingly, as seen in the introduction to the section.

 \subsubsection{Proof of Step $1$.}\label{sss:step:1}
 
 We start with the following proposition, that reduces the problem to a permutation $\tilde P$ with the property that it is still close to the identity in the $L^2$-norm, but all the subcubes of the tiling $\mathcal{R}_N$ do not move more that $C\delta^\epsilon$, for some positive constant $C$. We point out that the statement and the proof of the proposition are inspired by those ones in \cite{Shnirelman}. 
 
\begin{proposition}\label{prop:Step1}
    Let $P\in\mathcal{D}_N$ chosen ad above. Then there exist two positive constants $C=C(\nu)>0, D=D(\nu)>0$ and a permutation $\tilde P\in\mathcal{D}_N$ such that:
    \begin{itemize}
        \item $\|\tilde P-Id\|_2\leq C\delta^{1-\frac{\epsilon}{2}}$;
        \item for every $\kappa\in \mathcal{R}_N$, $|\tilde P(\kappa)-\kappa|\leq D{\delta}^{\epsilon}$.
    \end{itemize} 
    Moreover, the cost for connecting $P$ to $\tilde P$ is computed as
    \begin{equation}\label{eq:cost:step:1}
        L(P,\tilde P)\lesssim \delta^{1-\epsilon}.
    \end{equation}
\end{proposition}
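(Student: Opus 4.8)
The plan is to realise the three ingredients announced for Step~1: a volume bound on the badly displaced cubes, an explicit flow sending them back near home, and two separate estimates — one for the cost, one for the new $L^2$-distance — of that flow. Throughout I set $\tilde P=Z_T\circ\dots\circ Z_1\circ P$ for a discrete flow $\{Z_j\}$ to be constructed.

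\textbf{Step A: the volume estimate.} First I would call a cube $\kappa\in\mathcal R_N$ \emph{black} if $|c(P(\kappa))-c(\kappa)|>\delta^\epsilon$ and \emph{white} otherwise, and let $B$ be the number of black cubes. Since $P$ translates each cube rigidly, $|P(x)-x|=|c(P(\kappa))-c(\kappa)|$ on $\kappa$, so
\[
\delta^2\ge\|P-Id\|_2^2=\sum_{\kappa\in\mathcal R_N}N^{-\nu}\,|c(P(\kappa))-c(\kappa)|^2\ge N^{-\nu}\,B\,\delta^{2\epsilon},
\]
whence $B\le \delta^{2-2\epsilon}N^\nu$, i.e. the black cubes occupy volume at most $\delta^{2-2\epsilon}$. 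This Markov-type inequality is the only place where the full hypothesis $\|P-Id\|_2\le\delta$ is spent.

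\textbf{Step B: the flow, and its cost.} The target of bullet~2 is only that every final displacement be $\le D\delta^\epsilon$, i.e. that every black cube land inside (a neighbour of) its home cube $K$ of the coarse tiling $\mathcal R_{\delta^{-\epsilon}}$. I would therefore \emph{not} bring a black cube home by a single long swapping couple (Definition~\ref{def:Exchanging:couple}), because that would dump its partner arbitrarily far and destroy bullet~2; instead I route each black cube home in hops of size $\lesssim\delta^\epsilon$ (one coarse cube at a time), each hop realised by a swapping couple inside an array of length $\asymp N\delta^\epsilon$. Grouping the hops so that the arrays used at a given stage are disjoint, one obtains $\asymp\delta^{-\epsilon}$ elementary movements, each moving $\sum_i h(i)\le B$ couples. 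By the cost formula \eqref{eq:cost:elem:movem},
\[
L(Z_j)\le (N\delta^\epsilon)\,N^{-1-\frac{\nu}{2}}\sqrt{B}=\delta^\epsilon\,N^{-\frac{\nu}{2}}\sqrt{\delta^{2-2\epsilon}N^{\nu}}=\delta^\epsilon\cdot\delta^{1-\epsilon}=\delta,
\]
and summing over the $\asymp\delta^{-\epsilon}$ stages gives $L(P,\tilde P)\lesssim\delta^{-\epsilon}\cdot\delta=\delta^{1-\epsilon}$, which is \eqref{eq:cost:step:1}. The whole gain here is the concavity of the square root: paying for $B$ simultaneous swaps as $\sqrt{B}$ is what keeps the cost from blowing up with $N$, so the organisation of the hops into \emph{disjoint} arrays (or nested swapping couples inside one array) is essential.

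\textbf{Step C: the new $L^2$-distance (the main obstacle).} Cubes never touched by the flow keep their original displacement $\le\delta^\epsilon$, and their total squared contribution is bounded by the original $\|P-Id\|_2^2\le\delta^2\le\delta^{2-\epsilon}$. The delicate part — and the reason the $L^2$-norm is allowed to \emph{grow} from $\delta$ to $\delta^{1-\epsilon/2}$ — is the white cubes \emph{bumped} as a black cube relays past them. The quantitative heart of the proof, to be carried out stage by stage, is to show that each black cube bumps $\asymp\delta^{-\epsilon}$ white cubes (one per hop) and that each such cube moves only $\lesssim\delta^\epsilon$, so that the number of affected cubes is $\asymp B\,\delta^{-\epsilon}\lesssim\delta^{2-3\epsilon}N^\nu$ and their total squared contribution is
\[
\lesssim N^{-\nu}\cdot\delta^{2-3\epsilon}N^\nu\cdot(\delta^\epsilon)^2=\delta^{2-\epsilon}.
\]
Combining with the untouched part yields $\|\tilde P-Id\|_2\lesssim\delta^{1-\epsilon/2}$, and bullet~2 follows from the construction since every bumped cube moves by at most one coarse step. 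I expect this bookkeeping — tracking simultaneously, at each of the $\asymp\delta^{-\epsilon}$ stages, how many cubes are disturbed and by how much, while keeping the arrays disjoint for the cost estimate — to be the genuine technical difficulty, exactly the weakness flagged after the statement: a careless routing recovers only the weaker bound $\delta^{1-\epsilon}$, which is insufficient to feed Theorem~\ref{thm:problem:step:2}.
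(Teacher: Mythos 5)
Your proposal follows essentially the same route as the paper: the same Chebyshev-type volume bound $B\le\delta^{2-2\epsilon}N^\nu$, a flow organised into $\asymp\delta^{-\epsilon}$ stages each of cost $\lesssim\delta$ (hence total $\delta^{1-\epsilon}$), and the same $L^2$ bookkeeping ($\asymp B\,\delta^{-\epsilon}$ bumped white cubes, each displaced $\lesssim\delta^\epsilon$, contributing $\delta^{2-\epsilon}$ to the squared norm). The only difference is organisational: the paper secures the array-disjointness you flag as the remaining difficulty by first rearranging the colored cubes inside each coarse cube so that their translates are pairwise disjoint (condition \eqref{eq:inter:ins}), then sweeping them row-wise into a single coarse cube and redistributing column-wise, rather than routing each black cube home directly.
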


\begin{remark}
    We remark that all the constants involved in the computations of cost depend only on the dimension $\nu$.
\end{remark}

\begin{proof}
We prove the proposition in dimension $\nu=2$ since the extension to higher dimension is straightforward.
We denote the cubes of the less finer tiling $\mathcal{R}_{\delta^{-\epsilon}}$ via the multi-index notation $K_{ij}$, with 
$i,j\in\lbrace 1,2,\dots,\delta^{-\epsilon }\rbrace.$

We consider the following coloring $\Pi$ of the tiling $\mathcal R_N$ into white and colored subcubes defined as follows:
\begin{equation*}
    \Pi (\kappa)=\begin{cases}
        0\quad\text{ if } | P(\kappa)-\kappa|\leq\delta^\epsilon, \\
        j \text{ if } | P(\kappa)-\kappa|>\delta^\epsilon,\text{ and } P(\kappa)\in K_{ij}, \\
    \end{cases}
    \end{equation*}
for every $\kappa\in\mathcal {R}_N$.
In particular we will call white those cubes for which $\Pi(\kappa)=0$ and  colored those ones for which $\Pi(\kappa)\not =0$. We observe that
\begin{itemize}
    \item the number of colors is $\delta^{-\epsilon}+1$, (where $+1$ refers to the color white);
    \item by the constraint on the $L^2$-norm on the permutation $P$ one finds that the number of colored subcubes (that we will denote by $N(C)$) is 
    \begin{equation*}
        N(C)\leq \delta^{2-2\epsilon}N^2,
    \end{equation*}
    and in case of any dimension it is
    \begin{equation*}
        N(C)\leq \delta^{2-2\epsilon}N^\nu.
    \end{equation*}
\end{itemize}
Indeed, it holds
\begin{align*}
    \delta^2\geq \|P-Id\|_2^2=\sum_{\kappa\in\mathcal{R}_N}N^{-\nu}|P(\kappa)-\kappa|^2\geq \sum_{\kappa: \Pi(\kappa)\not=0} N^{-\nu}|P(\kappa)-\kappa|^2> N(C)N^{-\nu}\delta^{2\epsilon}.
\end{align*}
Observe that the number of colored subcubes is $N(C)\leq \delta^{\nu\epsilon} N^\nu$ in the assumption $\epsilon\leq \frac{2}{2+\nu}$ which implies that the colored subcubes can all be contained inside a single subcube of the bigger tiling $\mathcal{R}_{\delta^{-\epsilon}}$. The bound on $\epsilon$ is consistent with Remark \ref{rmk:cound:eps}.

\medskip

We divide the proof of the proposition in several steps: the main idea is that we move only the colored subcubes into their original position $P(\kappa)$ with the constraint that the white subcubes do not move more than $D\delta^\epsilon$, for some constant $D$, in order to prove the statement. 

\medskip

\textbf{Step 1.} We call $H_i$ with $i=1,2,\dots, \delta^{-\epsilon}$ the horizontal stripes of the tiling $\mathcal{R}_{\delta^\epsilon}$ and we call $K_{i1},K_{i2},\dots, K_{i\delta^{-\epsilon}}\in\mathcal{R}_{\delta^\epsilon}$ the subcubes  of the stripe $H_i$ enumerated starting from the left side of the cube. By means of Theorem \ref{thm:cost:nucube} together with Corollary \ref{cor:cost:coloring:rectangles}, we can perform a flow in each subcube $K_{ij}$ simultaneously and independently in order to obtain any coloring $\Pi'$ of the tiling $\mathcal{R}_N$ with the property that the number of colored subcubes in each $K_{ij}$ remains constant. More precisely, there exists a coloring $\Pi'$ of $\mathcal{R}_N$ into white and colored cubes with the following property: if we denote by $A_{ij}=\lbrace \kappa\in\mathcal{R}_N\cap K_{ij}:\Pi'(\kappa)\not=0\rbrace$ the set of colored subcubes of $K_{ij}$ under the new coloring, then
\begin{equation*}
    \left(\text{int}(A_{i1})+\delta^\epsilon\right)\cap \text{int}(A_{i2})=\emptyset,
\end{equation*}
where we denoted by $\text{int}(A_{i1})+\delta^\epsilon$ the translation of the set $\text{int}(A_{i1})$. Moreover, it holds
\begin{equation}\label{eq:inter:ins}
    \left(\left(\text{int}(A_{i1})+(j-1)\delta^\epsilon\right)\cup\dots\cup\left(\text{int}(A_{i(j-1)})+\delta^\epsilon\right)\right)\cap \text{int}(A_{ij})=\emptyset.
\end{equation}
In particular, there exists a discrete flow $\lbrace Z_j\rbrace$ acting simultaneously and independently on each cube $K_{ij}$ sending $\Pi$ into $\Pi'$ with cost computed as
\begin{equation*}
    L(\Pi,\Pi')\lesssim\sqrt{N(C)N^{-2}}\delta^\epsilon=\delta,
\end{equation*}
(which in case of any dimension is still $L(\Pi,\Pi')\lesssim\sqrt{N(C)N^{-\nu}}\delta^\epsilon=\delta$). Observe that at this stage the flow has moved the white cubes within each cube $K_{ij}$, so that each white subcube is moved no more that $\sqrt{2}\delta^\epsilon$ (see Figure \ref{fig:shn11}).

\begin{figure}
    \centering
    \includegraphics[scale=0.65]{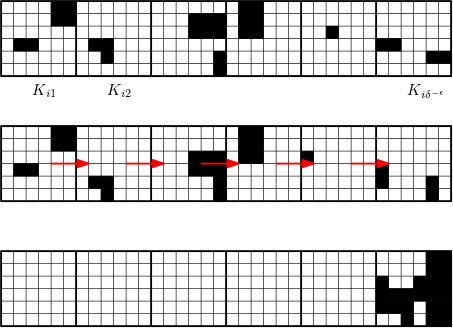}
    \caption{There exists a coloring $\Pi'$ (second line) such that Condition \eqref{eq:inter:ins} holds. The discrete flow acts independently and simultaneously on each $K_{ij}$ first, then it swaps the colored subcubes of $K_{ij}$ with the corresponding white in $K_{i(j+1)}$ (third line).}
    \label{fig:shn11}
\end{figure}
 \medskip

\textbf{Step 2.}
By the properties of the new coloring $\Pi'$ the idea in this step is that we can move all the colored subcubes into the last cube $K_{i\delta^{-\epsilon}}$, moving the white cubes of at most $2\delta^{\epsilon}$. In particular, we construct the following elementary movements: $Z_1, Z_2,\dots, Z_{\delta^{-\epsilon}-1}$ acting as follows:
\begin{itemize}
    \item the elementary movement $Z_1$ acts on the cubes $K_{i1}$ simultaneously and independently swapping the colored subcubes of $K_{i1}$ (for every $i$) with the same amount of white subcubes in the subcube $K_{i2}$. Call $c_i(1)$ the number of colored subcubes in $K_{i1}$ and $c_i(2)$ the number of colored subcubes in $K_{i2}$. The result of this movement is that in the cube $K_{i1}$ there are left only white cubes while in the cube $K_{i2}$ the number of colored subcubes is given by the sum $c_i(1)+c_i(2)$. The cost is clearly
    \begin{equation*}
        L(Z_1)\lesssim\sqrt{N(C)N^{-2}}\delta^\epsilon\leq\delta,
    \end{equation*}
    (and the same estimate holds in any dimension), and the white subcubes are moved of at most a quantity $\leq 2\delta^\epsilon$.

    \item Inductively the movement $Z_j$ acts on the cubes $K_{ij}$ simultaneously and independently swapping the colored subcubes of $K_{ij}$ (for every $i$) with the same amount of white subcubes in the subcube $K_{i(j+1)}$. Again if  $c_i(j)$ the number of colored subcubes in $K_{ij}$, after the movement $Z_j$ in the cube $K_{ij}$ there will be only white cubes while in the cube $K_{i(j+1)}$ the number of colored subcubes will be given by the sum $\sum_{k=1}^{j+1} c_i(k)$. The cost is clearly 
     \begin{equation*}
        L(Z_j)\lesssim \sqrt{N(C)N^{-2}}\delta^\epsilon\leq\delta,
    \end{equation*}
    (and the same estimate holds in any dimension). Again the white subcubes are moved of at most a quantity $\leq 2\delta^\epsilon$.
\end{itemize}
The whole procedure stops when all the colored subcubes are moved into the last cube $K_{i\delta^{-\epsilon}}$ (see Figure \ref{fig:shn11}). Since the number of movements here is $\delta^{-\epsilon}$, the total cost can be computed as
\begin{equation*}
    L\lbrace{Z_j}\rbrace_{j=1}^{\delta^{-\epsilon}-1}\lesssim \delta\delta^{-\epsilon}\lesssim\delta^{1-\epsilon}.
\end{equation*}
Acting similarly in the vertical columns of the tiling $\mathcal R_{\delta^\epsilon}$ we can move all the colored subcubes into the cube $K_{1\delta^{-\epsilon}}$. All the procedures are guaranteed by the fact that the total amount of black subcubes is less or equal than the amount of subcubes contained in a cube $K$ of the less finer tiling $\mathcal{R}_{\delta^{-\epsilon}}$.

\medskip

This step in particular proves that there exists a coloring $\tilde\Pi$ such that $\tilde\Pi(\kappa)=0$ for every $\kappa\not\in K_{1\delta^{-\epsilon}}$, where $\kappa\in \mathcal{R}_N$. Moreover, this flow has moved all the white cubes of an amount $\leq D\delta^\epsilon$ for some positive constant $D>0$.  

\medskip

\textbf{Step 3.} We have reduced the problem to the one where all the colored subcubes are contained inside one single cube $K_{1\delta^{-\epsilon}}$ of the less finer tiling $R_{\delta^{-\epsilon}}$. We call $V_j$ with $j=1,2,\dots, \delta^{-\epsilon}$ the vertical stripes of this tiling.
The main idea of the procedure is illustrated in Figure \ref{fig:prop:shnirel:1}. We first move the colors along the first row $H_1$ of the tiling (here, the cost will be computed as $\sim \delta^{1-\epsilon}$) in such a way that if $P(\kappa)\in V_j$ then $\kappa$ is moved in $K_{1j}$ of the tiling $R_{\delta^{-\epsilon}}.$ Remember that we can swap only subcubes that live in adjacent cubes $K_{1j}$, $K_{1(j+1)}$ in order to verify the constraint that the white cubes do not move more that $D\delta^\epsilon$.
We give the inductive procedure:
\begin{itemize}
    \item base step: we swap the colored cubes of colors $1,2,\dots,\delta^{-\epsilon}-1$ in the cube $K_{1\delta^{-\epsilon}}$ with the same amount of white cubes in $K_{1(\delta^{-\epsilon}-1)}$ through an elementary movement $T_1$ with cost
    \begin{equation*}
        L(T_1)\lesssim \sqrt{N(C)N^{-2}}\delta^\epsilon\leq\delta,
    \end{equation*}
    (and again the same amount holds in any dimension);
    \item inductive step: we swap the colored cubes of colors $1,2,\dots,j$ contained in the cube $K_{1j}$ with the same amount of white cubes in $K_{1(j-1)}$ through an elementary movement $T_{\delta^{-\epsilon}-j}$ with cost
    \begin{equation*}
L(T_{\delta^{-\epsilon}-j})\lesssim\sqrt{N(C)N^{-2}}\delta^\epsilon\leq\delta;
    \end{equation*}
    (and again the same amount holds in any dimension).
\end{itemize}
The procedure guarantees that the white cubes are moved no more than $2\delta^\epsilon$ and stops when each color has reached the right column. In particular, \begin{equation*}
    L(T_{\delta^{-\epsilon}-1}\dots T_2T_1)=\sum_{j=1}^{\delta^{-\epsilon}} L(T_j)\leq\delta^{-\epsilon}\sqrt{2}\delta\lesssim\delta^{1-\epsilon}.
\end{equation*}
\begin{figure}
    \centering
    \includegraphics[scale=0.6]{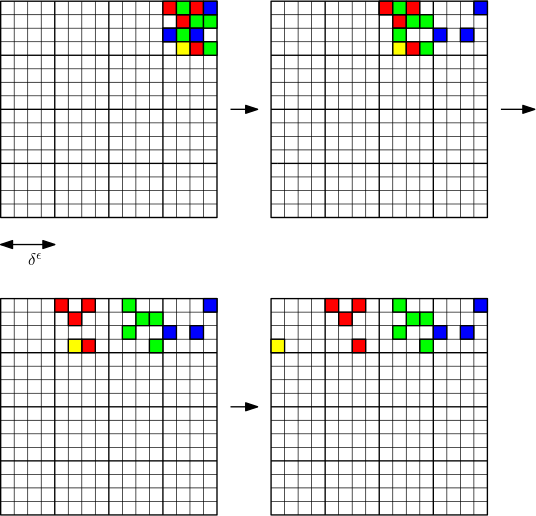}
    \caption{Explanation of Step $3$ of the proof of Proposition \ref{prop:Step1}}
    \label{fig:prop:shnirel:1}
\end{figure}

\textbf{Step 4} In this last step we move the colored subcubes of the previous step in their right position, namely if $\kappa\in K_{1j}$ and we know that $P(\kappa)\in K_{i,j}$ for some index $i$, we move the cube in its original position. This step is performed as Step $3$. Indeed, we can act again independently and simultaneously in each column $V_j$ and the cost is computed as the previous steps. 

\medskip

All this step give a new permutation $\tilde P$ of the tiling $\mathcal{R_N}$ satisfying the following property: for each $\kappa\in \mathcal{R}_N$ it holds $|\tilde P(\kappa)-\kappa|\leq D(\nu)\delta^\epsilon$. The cost $L(P,\tilde P)\lesssim \delta^{1-\epsilon}$, for some constant $D(\nu)$ depending only on the dimension, as a result of the previous steps. We need only to prove that $\tilde P$ is still close to $Id$ in the $L^2$-norm.

\medskip

Observe that in the whole procedure we have moved an amount of white cubes $\sim \delta^{-\epsilon} N(C)$ for some constant $M>0$, since we moved them only when we had to swap the colored cubes. In particular, if we call $\lbrace Z_j\rbrace_{j=1}^T$ the discrete flow we constructed in the previous $4$ steps, and if we call $A=\lbrace\kappa:\Pi(\kappa)=0,Z_T\circ\dots\circ Z_1(\kappa)=\kappa\rbrace$, that is, the set of white cubes that did not move during the four previous steps, then $\sharp A^c=MN(C)$ and 

\medskip 

\begin{align*}
    \|\tilde P-Id\|^2_2&=\sum_{\kappa\in\mathcal{R}_N}N^{-\nu}|\tilde P(\kappa)-\kappa|^2=\sum_{\kappa:\Pi(\kappa)=0}N^{-\nu}|\tilde P(\kappa)-\kappa|^2+\sum_{\kappa :\Pi(\kappa)\not=0}N^{-\nu}|\tilde P(\kappa)-\kappa|^2\\&\leq \sum_{\kappa:\Pi(\kappa)=0}N^{-\nu}|\tilde P(\kappa)-\kappa|^2+\sum_{\kappa :\Pi(\kappa)\not=0}N^{-\nu}| P(\kappa)-\kappa|^2\\ &=\sum_{\kappa\in A}N^{-\nu}| P(\kappa)-\kappa|^2+\sum_{\kappa :\Pi(\kappa)\not=0}N^{-\nu}| P(\kappa)-\kappa|^2+\sum_{\kappa\in A^c}N^{-\nu}| \tilde P(\kappa)-\kappa|^2 \\&\leq \|P-Id\|^2_2+MN(C)N^{-\nu}\delta^{2\epsilon}\leq\delta^{2-\epsilon},
\end{align*}
which concludes the proof. 
\end{proof}
\begin{remark}
    We remark that in the previous proof 
    we could not apply directly Corollary \ref{cor:cost:coloring:rectangles}, since we should have guaranteed that white cubes did not move more than $D(\nu)\delta^\epsilon$. But the previous computations actually show that, in terms of the cost, applying the corollary or consider the motion in multi-steps as done in the previous proof gives the same cost $\sim\delta^{1-\epsilon}$ (what changes is the constant, but still depends only on the dimension $\nu$).
\end{remark}
In other words, we have proved that up to spending some cost of the order $\delta^{1-\epsilon}$, we can assume to start with a permutation $P$ of the tiling $\mathcal{R}_N$ whose property is that $|P(\kappa)-\kappa|\leq \delta^\epsilon$, which is still close to $Id$ in the $L^2$-norm. 

\subsubsection{Proof of Step $2$.}\label{sss:step:2}
The next idea is to find another permutation $\tilde P$ close to the identity such that it is constant on an appropriate partition of the space. This is given in the proof of the next theorem (the second step), which requires a bit more effort. While the previous proof was inspired by the ideas in \cite{Shnirelman}, we perform this step differently, since in \cite{Shnirelman} no example of discrete flow is exhibited. We always exploit the idea that the cost can be computed via an estimate of the volume of the cubes that are moved by the permutation. Again, this estimate is based on the constraint given by the $L^2$-norm satisfied by the permutation $\tilde P$. But we will see that, in order to compute the volume constraint,  we need to count the number of trajectories of colored cubes. We will prove the following:

\begin{theorem}\label{thm:problem:step:2}
    Let $P\in\mathcal{D}_N$ such that $\|P-Id\|_2\leq\delta^{1-\frac{\epsilon}{2}}$, and that $|P(\kappa)-\kappa|\leq \delta^\epsilon$ for some $\epsilon\in(0,\frac{2}{2+\nu})$, for all $\kappa\subset\mathcal{R}_N$. Then there exists a  permutation $\tilde P\in\mathcal{D}_N$ constant on the tiling $\mathcal{R}_{\delta^{\epsilon}}$ such that 
    \begin{equation}
        \label{eq:cost:step:2}
        L(P,\tilde P)\lesssim\max\lbrace \delta^{\frac{1}{2}-\frac{3\epsilon}{4}},\delta^{\frac{1}{6}+\epsilon\frac{7}{12}}\rbrace.
    \end{equation}
\end{theorem}

Before giving the proof of the theorem, we consider the following subdivision of the unit cube into $M$ slices, where we have denoted by $M=\delta^{-\epsilon}$ and
by $H_1,H_2\dots, H_M$ the horizontal slices.
\begin{equation*}
    H_i=[0,1]\times\left[\frac{i-1}{M},\frac{i}{M}\right]\times[0,1]^{\nu-2}, \quad i\in\lbrace 1,2,\dots,M\rbrace.
\end{equation*} 
Consider now the following coloring of the tiling $\mathcal{R}_N$ in white, black and red cubes:
\begin{equation}\label{eq:coloring:3:colors}
    \Pi(\kappa)=\begin{cases}
        0\quad\text{if }P(\kappa)\in H_i, \\
        1 \quad\text{if }P(\kappa)\in H_{i-1}, \\
        2 \quad\text{if }P(\kappa)\in H_{i+1},
    \end{cases}\kappa\subset H_i.
\end{equation}
  We start by observing that, since $|P(\kappa)-\kappa|\leq \delta^\epsilon$, then 
\begin{equation*}
    P(H_i)\subset H_{i-1}\cup H_{i}\cup H_{i+1},
\end{equation*}
where we have denoted by $H_0=H_{M+1}=\emptyset$.
By induction, we have that
\begin{equation}\label{eq:observation}
    \sharp \lbrace \kappa\in H_i: \Pi(\kappa)=1\rbrace=\lbrace \kappa\in H_{i-1}:\Pi(\kappa)=2\rbrace.
\end{equation}

Thus we will call black those cubes for which $\Pi(\kappa)=1$, red those ones for which $\Pi(\kappa)=2$. If we do not need to distinguish red and black cubes, we will call them colored cubes. We rename the slices as follows: $U_1=H_2$, $L_1=H_3$, $U_2=H_4$, $L_2=H_5$, and more generally $L_j=H_{2j+1}$ and $U_j=H_{2j}$. Equation \eqref{eq:observation} in particular tells us that the number of red cubes in $U_j$ is equal to the number of black cubes in $L_j$, for all $j$.

\medskip

The key idea in this step is that, if the colored cubes are distant, if we follow the trajectory of a red cube, then it will be 'long'. 
    We will see in a while that this allows to get good volume estimates on the amount of colored cubes, while the price to pay is the new constraint on the $L^2$-norm, as in Proposition \ref{prop:Step1}. This represents one of the major obstructions of this method, as pointed out in the introduction.

    \medskip
    
   More precisely: let us fix a red cube $\kappa\subset U_j$ with $\Pi(\kappa)=2$ for some $j$. We define the \emph{orbit} of $\kappa$ as $\lbrace P^n(\kappa)\rbrace$, with $n=0,1,2,\dots$. Clearly, for every red cube $\kappa$, there exists $\bar n\geq 2$ such that $\Pi (P^{\bar n}(\kappa))=1$ and $P^{\bar n}(\kappa)\subset L_j$ (Figure \ref{fig:trajectory}). Moreover, 
   without loss of generality we can assume that all the cubes in the  orbit $P^j(\kappa)$ for $j=1,2,\dots, n$ live in the stripe $L_j$ (if not, we can simply cancel them from the orbit: in this case we can refer to the new orbit as \emph{reduced orbit}). In particular this allows for the choice of $\bar n$ such that $\Pi(P^n(\kappa))=0$ for $n=1,2,\dots,\bar n-1$. Since $\bar n$ depends on $\kappa$, we will denote it by $\bar n(\kappa)$. We point out that, though the main constructions are performed in $\nu=2$, the estimates will be computed for a general dimension $\nu$.

Then we have the following
\begin{lemma}\label{lem:cost:trajectories}
    Let $P\in\mathcal{D}_N$ as in the statement of Theorem \ref{thm:problem:step:2}. Then
    \begin{equation}\label{eq:set:trajectories}
        \mathcal{L}^\nu\left\lbrace\bigcup_j\lbrace\kappa\subset U_j: \Pi(\kappa)=2, |\kappa-P^{\bar n(\kappa)}(\kappa)|>\delta^{\epsilon}\rbrace\right\rbrace<\delta^{1-\frac{3}{2}\epsilon}.
    \end{equation}
\end{lemma}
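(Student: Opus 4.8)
The plan is to convert the $L^{2}$ hypothesis into an $L^{1}$-type budget for the total displacement, and then argue that each \emph{long} red trajectory necessarily consumes a proportionally large share of this budget, while distinct trajectories consume disjoint shares.

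First I would record the $L^{1}$ consequence of the assumption. Since the weights $N^{-\nu}$ sum to $1$ over the $N^{\nu}$ cubes, Cauchy--Schwarz applied to $\sum_{\kappa}N^{-\nu}|P(\kappa)-\kappa|^{2}=\|P-Id\|_{2}^{2}\le\delta^{2-\epsilon}$ yields
\begin{equation*}
	\sum_{\kappa\in\mathcal{R}_N} N^{-\nu}\,|P(\kappa)-\kappa|\;\le\;\Big(\sum_{\kappa}N^{-\nu}|P(\kappa)-\kappa|^{2}\Big)^{1/2}\Big(\sum_{\kappa}N^{-\nu}\Big)^{1/2}\;\le\;\delta^{1-\frac{\epsilon}{2}}.
\end{equation*}
This is the total displacement budget. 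Next, for a red cube $\kappa\subset U_j$ I would look at its reduced orbit $\kappa,P(\kappa),\dots,P^{\bar n(\kappa)}(\kappa)$, whose intermediate points are white cubes of $L_j$ and whose terminal point $P^{\bar n(\kappa)}(\kappa)$ is a black cube of $L_j$. By the triangle inequality the net displacement is controlled by the total orbit length,
\begin{equation*}
	\big|\kappa-P^{\bar n(\kappa)}(\kappa)\big|\;\le\;\sum_{n=0}^{\bar n(\kappa)-1}\big|P(P^{n}(\kappa))-P^{n}(\kappa)\big|,
\end{equation*}
so that if $\kappa$ belongs to the set $E$ in the statement (net displacement $>\delta^{\epsilon}$), its orbit contributes strictly more than $\delta^{\epsilon}$ to the right-hand side.

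The decisive structural point is the disjointness of these trajectories. Since $P$ is a bijection, the orbit segments of two distinct red cubes of $E$ cannot share a cube: for different indices $j$ the segments live in different slices $L_j$, and for a fixed $j$ a red starting cube lies in $U_j$ while all interior and terminal cubes of every trajectory lie in $L_j$, so a red starting cube cannot occur inside another trajectory, and injectivity of $P$ forbids two forward orbit segments from coalescing. Consequently the source cubes $\{P^{n}(\kappa):0\le n\le\bar n(\kappa)-1\}$, as $\kappa$ ranges over $E$, are pairwise disjoint. Summing the previous inequality against the weight $N^{-\nu}$ over $\kappa\in E$ and using disjointness to bound the right-hand side by the full displacement budget gives
\begin{equation*}
	\mathcal{L}^{\nu}(E)\,\delta^{\epsilon}\;<\;\sum_{\kappa\in E} N^{-\nu}\sum_{n=0}^{\bar n(\kappa)-1}\big|P(P^{n}\kappa)-P^{n}\kappa\big|\;\le\;\sum_{y\in\mathcal{R}_N} N^{-\nu}\,|P(y)-y|\;\le\;\delta^{1-\frac{\epsilon}{2}},
\end{equation*}
and dividing by $\delta^{\epsilon}$ produces $\mathcal{L}^{\nu}(E)<\delta^{1-\frac{3}{2}\epsilon}$, which is exactly \eqref{eq:set:trajectories}.

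The step I expect to require the most care is precisely this disjointness: one must check both that a red starting cube cannot appear as an interior or terminal point of another trajectory, and that the injectivity of $P$ prevents two forward orbit segments from merging, since only then is each step length counted at most once against the budget. Everything else is routine bookkeeping with the $N^{-\nu}$ factors to pass between cube counts and $\nu$-dimensional volume.
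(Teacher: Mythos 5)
Your proof is correct and reaches the required exponent, but by a genuinely different mechanism than the paper's. The paper stays entirely inside the $L^2$ budget and applies Jensen's inequality three times: first to the squared step lengths along each orbit to get $\sum_n\ell_n^2(\kappa)\geq\delta^{2\epsilon}/\bar n(\kappa)$, then to the reciprocals $\bar n(\kappa)^{-1}$ within each stripe via the crude bound $\sum_{O(\kappa)\in\mathcal O_j}\bar n(\kappa)\lesssim\delta^{\epsilon}N^{\nu}$, and finally over the $\delta^{-\epsilon}$ stripes, arriving at $N^{-\nu}\sharp\mathcal O\leq\delta^{1-\frac{3}{2}\epsilon}$. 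You instead convert the $L^2$ hypothesis into an $L^1$ displacement budget by a single Cauchy--Schwarz and charge each long orbit at least $\delta^{\epsilon}$ of that budget, with disjointness of the orbit segments ensuring that no single-step displacement is counted twice; your disjointness argument (colors separate starting cubes from interior cubes, parity of the slice index separates different $j$, and injectivity of $P$ rules out coalescence within a cycle) is sound. The two routes are of comparable strength, but yours is shorter and makes explicit the orbit disjointness that the paper only uses implicitly in the bound $\sum\bar n(\kappa)\lesssim\delta^{\epsilon}N^{\nu}$. Both proofs share the same caveat inherited from the setup preceding the lemma: the telescoping into single-step displacements (respectively, the identification of $\ell_n$ with displacements of distinct cubes) is only valid under the paper's convention that the intermediate cubes of each orbit remain in $L_j$; for a genuinely reduced orbit, consecutive elements are no longer one $P$-step apart and the bookkeeping would need to be revisited. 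Since you invoke the same convention as the paper, this is not a gap specific to your argument.
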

\begin{proof}
     We call $\ell_n(\kappa)=|P^n(\kappa)-P^{n-1}(\kappa)|$, where we recall that $|\cdot |$ denotes the Euclidean distance. By the definition of the set \eqref{eq:set:trajectories}, 
       \begin{equation*}
       \sum_{n=1}^{\bar n(\kappa)}\ell_n(\kappa)>\delta^{\epsilon},
       \end{equation*}
       which holds by the triangular inequality.

       We will denote by $\mathcal{O}_j$ the set of (reduced) orbits in $L_j$, that is
       \begin{equation*}\mathcal{O}_j=\underset{\kappa\subset U_j,\Pi(\kappa)=2 }{\bigcup}\cup_{n=1}^{\bar n(\kappa)}\lbrace P^n(\kappa)\rbrace=\underset{\kappa\subset U_j,\Pi(\kappa)=2 }{\bigcup} O(\kappa),
       \end{equation*}
where we have called  $O(\kappa)=\cup_{n=1}^{\bar n(\kappa)}\lbrace P^n(\kappa)\rbrace$. 

\medskip 

Finally, we define the set of orbits as
\begin{equation*}
    \mathcal{O}=\cup_j \mathcal{O}_j.
\end{equation*} 
 
       Clearly, by the assumption that the trajectories live in the stripe $L_j$, we have that $\sharp \mathcal O_j=\sharp\lbrace\kappa\subset U_j:\Pi(\kappa)=2\rbrace$. In particular, there is a one-to-one correspondence between red cubes and reduced orbits $\kappa \leftrightarrow O(\kappa).$

\medskip
       
       We use the constraint on the $L^2$-norm to improve the estimate on the volume of colored cubes by counting their orbits. We have that:
       \begin{equation*}
           \sqrt{\sum_j \sum_{\mathcal O_j} N^{-\nu}\sum_{n=1}^{\bar n(\kappa)}{\ell_n}^2(\kappa)}\leq \delta^{1-\frac{\epsilon}{2}}.
       \end{equation*}
       By Jensen's inequality, we have that, for every $O(\kappa)$, it holds 
       \begin{equation*}
           \sum_{n=1}^{\bar n(\kappa)} \ell_n^2(\kappa)\geq \frac{\delta^{2\epsilon}}{\bar n(\kappa)}.
       \end{equation*}
       In particular, 
       \begin{equation*}
            \sqrt{\sum_j \sum_{\mathcal O_j} N^{-\nu}\frac{1}{\bar n(\kappa)}}\lesssim \delta^{1-\frac{3}{2}\epsilon}.
       \end{equation*}
       We use again Jensen's inequality, which yields
       \begin{equation*}
           \sum_{O(\kappa)\in O_j} \bar n^{-1}(\kappa)\geq (\sharp\mathcal{O}_j)^2\left(\sum_{O(\kappa)\in O_j} \bar n(\kappa)\right)^{-1}\geq (\sharp\mathcal{O}_j)^2(\delta^\epsilon N^\nu)^{-1},
       \end{equation*}
       where the last inequality follows by observing that the maximum amount of cubes in the trajectory, over all trajectories, is less than the number of total cubes in the stripes $U_j\cup L_j$ (again, everything holds up to some constant depending only on the dimension). We conclude applying for the third time Jensen's inequality to the number of $j$ (recall that $j\in\lbrace 1,2,\dots,\frac{M}{2}\rbrace$), so that
       \begin{equation}\label{eq:questa}
          \delta^{1-\frac{3}{2}\epsilon}\geq \sqrt{\sum_j \sum_{\mathcal O_j} N^{-\nu}\frac{1}{\bar n(\kappa)}}\geq \sqrt{\sum_{j} \delta^{-\epsilon}N^{-2\nu}(\sharp\mathcal \mathcal{O}_j)^2}\geq N^{-\nu}\sharp \mathcal O.
       \end{equation}
       This inequality tells us that the number of colored cubes is, up to some constant, 
       \begin{equation*}
           \sharp\lbrace \kappa:\Pi(\kappa)\not=0\rbrace\leq \delta^{1-\frac{3}{2}\epsilon}N^{\nu}.
       \end{equation*}
       \end{proof}
       \begin{remark}\label{rmk:estimate:O:j}
       From the computations of the previous corollary, we have immediately that $\sharp O_j\lesssim \delta^{1-\epsilon}N^\nu$.
       \end{remark}
      \medskip
         \begin{figure}
       \centering
       \includegraphics[scale=0.65]{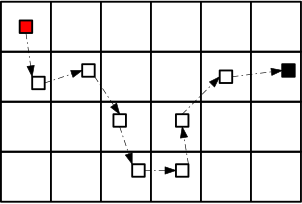}
       \caption{Every trajectory starting from a red cube in the slice $U_{\bar j}$ hits a black cube in the slice $L_{\bar j}$. This observation, together with the constraint on the $L^2$-norm of the permutation, gives a reduced amount of black and red cubes. Indeed we have that, before the black cube is reached, a considerable amount of intermediate white cubes has to be moved by the permutation itself. }
       \label{fig:trajectory}
   \end{figure}
  
As a corollary of the previous lemma, we get that
\begin{corollary}\label{cor:cost:distant:cubes}
   Let $P\in\mathcal{D}_N$ be as in the statement of Theorem \ref{thm:problem:step:2} and let $\epsilon\leq\frac{1}{\nu+1}$. Then there exists $\tilde P\in\mathcal{D}_N$ such that
    \begin{itemize}
        \item The set \begin{equation*}
        \bigcup_j\lbrace\kappa\subset U_j: \Pi(\kappa)=2, |\kappa-\tilde P^{n(\kappa)}(\kappa)|>\delta^\epsilon\rbrace=\emptyset;
    \end{equation*}
    \item $L(P,\tilde P)\lesssim\delta^{\frac{1}{2}-\frac{3\epsilon}{4}};$
    \item $\|\tilde P-Id\|_2\lesssim \delta^{\frac{1}{2}-\frac{\epsilon}{4}}.$
    \end{itemize}
    
\end{corollary}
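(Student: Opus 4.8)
The plan is to build $\tilde P$ from $P$ by a single discrete flow that repairs only the red cubes with long trajectories, leaving all short-trajectory and white cubes essentially untouched. Write $B=\bigcup_j\{\kappa\subset U_j:\Pi(\kappa)=2,\ |\kappa-P^{\bar n(\kappa)}(\kappa)|>\delta^\epsilon\}$ for the bad set. Lemma \ref{lem:cost:trajectories} gives $\mathcal{L}^\nu(B)<\delta^{1-\frac{3}{2}\epsilon}$, hence $\sharp B\lesssim\delta^{1-\frac{3}{2}\epsilon}N^\nu$, while Remark \ref{rmk:estimate:O:j} controls the per-slice contribution by $\sharp(B\cap U_j)\leq\sharp\mathcal{O}_j\lesssim\delta^{1-\epsilon}N^\nu$. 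These two estimates are what the construction is tuned against: the global count governs the cost, the per-slice count governs how densely the repairs must be packed.

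First I would work one slice pair $(U_j,L_j)$ at a time. By \eqref{eq:observation} the number of bad red cubes in $U_j$ equals the number of black cubes in $L_j$ they are matched to, so I can pair them up. The repair is a rearrangement flow $W_j$, supported on $U_j\cup L_j$ and built from elementary movements along arrays, that slides each bad red cube and its partner black cube into a common vertical line, so that after $W_j$ the red cube sits within $\delta^\epsilon$ of a black cube; replacing the long orbit by the resulting short swap empties $B$. Running all the $W_j$ simultaneously (the slices are disjoint) and composing with $P$ yields $\tilde P$, which satisfies the first bullet by construction.

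For the cost I would invoke Theorem \ref{thm:cost:nucube} and Corollary \ref{cor:cost:coloring:rectangles}: the repair is a coloring transport whose arrays have length at most $N$ and whose total number of swapped cubes is $\sharp B\lesssim\delta^{1-\frac{3}{2}\epsilon}N^\nu$. By the cost formula \eqref{eq:cost:elem:movem} this gives $L(P,\tilde P)\lesssim N\cdot N^{-1-\frac{\nu}{2}}\sqrt{\delta^{1-\frac{3}{2}\epsilon}N^\nu}=\delta^{\frac{1}{2}-\frac{3\epsilon}{4}}$, the second bullet. The $L^2$ bound is then an accounting of displacements: the repaired cubes move by at most $\delta^\epsilon$, contributing $\lesssim\delta^{1-\frac{3}{2}\epsilon}N^\nu\cdot N^{-\nu}\cdot\delta^{2\epsilon}=\delta^{1+\frac{\epsilon}{2}}$, while the displacements of the untouched cubes are already controlled by $\|P-Id\|_2\leq\delta^{1-\frac{\epsilon}{2}}$; summing the two contributions yields $\|\tilde P-Id\|_2\lesssim\delta^{\frac{1}{2}-\frac{\epsilon}{4}}$.

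The hard part is to carry out each per-slice repair with displacements of order $\delta^\epsilon$ only, so that the realigned red cube genuinely lands within $\delta^\epsilon$ of a black one and the $L^2$ accounting above goes through. This is exactly where $\epsilon\leq\frac{1}{\nu+1}$ is forced. A slice is thin, of width $\delta^\epsilon$, in one direction and of full extent in the other $\nu-1$; packing the $\lesssim\delta^{1-\epsilon}N^\nu$ bad cubes of a slice into $\delta^\epsilon$-scale regions in which reds and blacks can be matched at displacement $\delta^\epsilon$ requires the local capacity $\delta^{\epsilon\nu}N^\nu$ of such a region to dominate the local demand, i.e. $\delta^{\epsilon\nu}\gtrsim\delta^{1-\epsilon}$, which is precisely $(\nu+1)\epsilon\leq 1$. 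If one drops this and lets the repairs spread over distance $\sim 1$, the cost bound $\delta^{\frac{1}{2}-\frac{3\epsilon}{4}}$ survives but the $L^2$ estimate degrades to $\delta^{1-\frac{3}{2}\epsilon}$, which overshoots the target $\delta^{1-\frac{\epsilon}{2}}$ since $1-\frac{3}{2}\epsilon<1-\frac{\epsilon}{2}$. Reconciling the locality that the $L^2$ control demands with the global matching that empties $B$ is the true obstruction, and is the ``Assumption A'' the introduction singles out.
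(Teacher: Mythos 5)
Your proposal follows essentially the same route as the paper: a volume estimate from Lemma \ref{lem:cost:trajectories}, a repair flow acting simultaneously and independently on the slice pairs $U_j\cup L_j$ that swaps reds with blacks, a cost of (unit array length)$\times N^{-1-\frac{\nu}{2}}\sqrt{\sharp B}=\delta^{\frac{1}{2}-\frac{3\epsilon}{4}}$, and the identification of $\epsilon\leq\frac{1}{\nu+1}$ with the requirement $\delta^{1-\epsilon}\leq\delta^{\epsilon\nu}$ (the paper's Assumption A). The only slight divergence is in the $L^2$ bookkeeping: the paper charges the $\delta^{-\epsilon}$-fold population of white cubes dragged along during the rearrangement (as in Proposition \ref{prop:Step1}), giving the squared contribution $\delta^{1-\frac{5\epsilon}{2}}\cdot\delta^{2\epsilon}=\delta^{1-\frac{\epsilon}{2}}$, whereas you count only the colored cubes; your stated conclusion is nonetheless the correct one.
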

\begin{proof}
     For simplicity we will assume that those red cubes for which $P^{\bar{n}(\kappa)}(\kappa)$ is such that $|P^{\bar{n}(\kappa)}(\kappa)-\kappa|\leq\delta^\epsilon$ are white. We will get rid of them in the next results.

     \medskip 
     
     By the volume estimate obtained in the previous lemma, we have that the number of colored subcubes is estimated by $\delta^{1-\frac{3}{2}\epsilon}N^{\nu}$.
     With this observation at hand, we perform a discrete flow $\lbrace \tilde Z_j\rbrace$ that acts simultaneously and independently in the couples $U_j\cup L_j$, swapping red cubes with black cubes (first we arrange red cubes in $U_j$ and respectively the black cubes in $L_j$, then we swap them).
       Here, by definition of the discrete length functional \eqref{eq:discr:length}, and Corollary \ref{cor:cost:coloring:rectangles}, the cost is computed by the square root of the volume of colored cubes, which is the estimate above, times the size of the rectangle where the flow is acting, in this case $1$, since the flow moves cubes in $U_j\cup L_j$. This is clearly given by
       \begin{equation*}
            L\lbrace\tilde Z_j\rbrace\lesssim \delta^{\frac{1}{2}-\frac{3\epsilon}{4}}.
       \end{equation*}
       After this step, in each stripe $U_j$ (resp. $L_j$) there are only white cubes.
      Since we need to guarantee that $|\tilde P(\kappa)-\kappa|\leq C\delta^\epsilon$, which is the main difficulty of this corollary, we make the following

      \medskip
      \textbf{Assumption A} :  The volume of the colored subcubes such that $|P(\kappa)-\kappa|>\delta^\epsilon$ in a stripe $U_j\cup L_j$ (given by $\sharp\mathcal{O}_j$, see Remark \ref{rmk:estimate:O:j}) is less than the volume of a single cube $\delta^{\epsilon\nu}$. In particular, the assumption can be read as $\delta^{1-\epsilon}\leq\delta^{\epsilon\nu}$ which yields the bound $\epsilon\leq\frac{1}{\nu+1}$.

      We want to show that, thanks to the volume constraint in the above assumption it is possible to guarantee that the white cubes do not move more than $C\delta^\epsilon$ when we want these cubes to reach their original position. 
      Since this follow from a the same procedure presented in Proposition \ref{prop:Step1}, we omit the details. 
      Clearly, the cost here is given by 
      \begin{equation*}
          \sim \delta^{\frac{1}{2}-\frac{3\epsilon}{4}}.
      \end{equation*}

       \textbf{Estimate on the $L^2$-norm.} Also in this case, for the new permutation $\tilde P$ the amount of cubes that we are moving is of the order of the amount of black cubes $\delta^{1-\frac{3\epsilon}{2}}$ times $\delta^{-\epsilon}$ (see Proposition \ref{prop:Step1}). Thus the estimate on the $L^2$-norm reads as
       \begin{equation*}
           \|\tilde P-Id\|^2_2\lesssim \delta^{1-\frac{\epsilon}{2}}.
       \end{equation*}
\end{proof}
 
\begin{lemma}\label{lem:cost:Step:2}
    Let $P\in\mathcal{D}_N$ be a permutation of the tiling $\mathcal{R}_N$ and assume that $\|P-Id\|_2\leq\delta^{\frac{1}{2}-\frac{\epsilon}{4}}$ and that $|P(\kappa)-\kappa|\leq\delta^\epsilon$ for every $\kappa\subset\mathcal{R}_N$, where $\epsilon$ is the parameter chosen at the beginning. Let $\Pi$ be the coloring defined in Equation \eqref{eq:coloring:3:colors}. Then the volume of colored subcubes can be estimated as
    \begin{equation*}
|\lbrace \kappa:\Pi(\kappa)\not=0\rbrace|=N^{-\nu}\sharp\lbrace\kappa:\Pi(\kappa)\not=0\rbrace\lesssim \delta^{\frac{1}{3}-\frac{5\epsilon}{6}}, 
    \end{equation*}
    where the inequality holds up to some positive constant depending only the dimension.
\end{lemma}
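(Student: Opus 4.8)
The plan is to estimate the number of colored cubes purely through the size of the single-step displacements $d_\kappa:=|P(\kappa)-\kappa|$, splitting the colored cubes into those that move more than a threshold $\rho$ and those that move less, and then optimizing $\rho$ at the very end. This is a cruder mechanism than the trajectory/Jensen count used in Lemma~\ref{lem:cost:trajectories}, but it is all that the weaker hypothesis $\|P-Id\|_2\leq\delta^{1/2-\epsilon/4}$ will support.

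First I would isolate the role of the $L^2$ bound. Writing $\|P-Id\|_2^2=\sum_{\kappa\in\mathcal{R}_N}N^{-\nu}d_\kappa^2\leq\delta^{1-\epsilon/2}$, a Chebyshev-type inequality gives, for any $\rho>0$,
\begin{equation*}
\rho^2\,N^{-\nu}\sharp\{\kappa:d_\kappa>\rho\}\leq\|P-Id\|_2^2\leq\delta^{1-\epsilon/2},
\end{equation*}
so the cubes with large displacement occupy volume $N^{-\nu}\sharp\{\kappa:d_\kappa>\rho\}\lesssim\delta^{1-\epsilon/2}\rho^{-2}$. This controls every colored cube that moves more than $\rho$, irrespective of the coloring $\Pi$.

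The geometric heart of the argument concerns the colored cubes with $d_\kappa\leq\rho$. By the definition \eqref{eq:coloring:3:colors} of $\Pi$, such a cube $\kappa\subset H_i$ satisfies $P(\kappa)\in H_{i-1}\cup H_{i+1}$, so its vertical coordinate must cross one of the hyperplanes $\{x_2=k\delta^\epsilon\}$, $k=1,\dots,M-1$, separating consecutive slices. Since the vertical displacement of $\kappa$ is at most $d_\kappa\leq\rho$, the center of $\kappa$ lies within vertical distance $\rho$ of that hyperplane. There are $\sim\delta^{-\epsilon}$ such hyperplanes, and the slab of vertical width $2\rho$ around each one contains cubes of total volume $\lesssim\rho$; hence the slow colored cubes occupy volume $\lesssim\delta^{-\epsilon}\rho$. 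I stress that this second estimate uses only the slicing and the displacement bound, and discards the $L^2$ information on the near-boundary region entirely.

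Adding the two contributions yields
\begin{equation*}
N^{-\nu}\sharp\{\kappa:\Pi(\kappa)\neq0\}\lesssim\delta^{1-\epsilon/2}\rho^{-2}+\delta^{-\epsilon}\rho,
\end{equation*}
and I would finish by optimizing in $\rho$: balancing the two terms forces $\rho^3=\delta^{1+\epsilon/2}$, i.e.\ $\rho=\delta^{1/3+\epsilon/6}$, at which value both terms equal $\delta^{1/3-5\epsilon/6}$, giving the claimed bound. The main obstacle is precisely the slab estimate: counting \emph{every} cube in a neighborhood of the slice boundaries, rather than only those actually displaced across them, is wasteful and is what produces the exponent $1/3$ instead of the sharper exponent an $L^2$-only count would suggest. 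A minor point to verify at the end is that the optimal $\rho=\delta^{1/3+\epsilon/6}$ satisfies $N^{-1}\lesssim\rho\leq\delta^\epsilon$ in the admissible range $\epsilon\leq\frac{2}{2+\nu}$, so that the slabs are nondegenerate and do not already fill the whole cube.
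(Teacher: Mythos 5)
Your proof is correct and arrives at the same exponent, but it packages the argument differently from the paper. Both proofs rest on the same geometric observation — a colored cube must have displacement at least as large as the distance from its center to the nearest slice boundary $\{x_2=k\delta^\epsilon\}$ — but the paper exploits it by decomposing each slice $H_i$ into horizontal layers of thickness $N^{-1}$, writing $\|P-Id\|_2^2\gtrsim N^{-\nu-2}\sum_j j^2R_j$ for the layer counts $R_j$, and solving the resulting extremal problem (the maximizer fills the layers nearest the boundaries up to a height $\ell N^{-1}\le\delta^{1/3+\epsilon/6}$, a bathtub-type argument asserted partly by appeal to the worst-case figure). You instead split at a displacement threshold $\rho$, control the fast cubes by Chebyshev with the full $L^2$ budget, bound the slow colored cubes by the volume $\lesssim\delta^{-\epsilon}\rho$ of the union of slabs around the $\delta^{-\epsilon}$ boundaries, and optimize; your optimal $\rho=\delta^{1/3+\epsilon/6}$ is precisely the paper's critical height $\ell N^{-1}$. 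Your version buys a cleaner, fully rigorous derivation that avoids justifying the extremal configuration, at the cost of the (standard) factor-of-two loss from balancing two terms, which is absorbed by $\lesssim$. Two small points you flagged both check out harmlessly: the two-term bound holds for every $\rho>0$ (if $\rho<N^{-1}/2$ the slow bucket is empty, since slice boundaries lie on cube faces and centers sit at distance $\ge N^{-1}/2$ from them; if $\rho\ge N^{-1}/2$ each slab has volume $\le 2(\rho+N^{-1})\lesssim\rho$), and likewise $\rho>\delta^\epsilon$ would only make the slab bound trivial rather than false, so no admissibility condition on $\epsilon$ beyond the paper's is actually needed.
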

\begin{proof}
    We start by observing that, since $|P(\kappa)-\kappa|\leq \delta^\epsilon$, then 
\begin{equation*}
    P(H_i)\subset H_{i-1}\cup H_{i}\cup H_{i+1},
\end{equation*}
where we have denoted by $H_0=H_{M+1}=\emptyset$.
By induction, we have that
\begin{equation}\label{eq:observation}
    \sharp \lbrace \kappa\in H_i: \Pi(\kappa)=1\rbrace=\lbrace \kappa\in H_{i-1}:\Pi(\kappa)=2\rbrace.
\end{equation}
Since the volume constraint is given up to a constant depending on the dimension, we can assume to consider only red cubes in $H_i$ with $i$ odd and black ones in $H_i$ with $i$ even (Figure \ref{fig:worst:case:2}).
By the constraint on the $L^2$-norm we observe that the maximum amount of colored subcubes is realized by the permutation that gives rise to the coloring in Figure \ref{fig:worst:case:2}, where the colored cubes accumulate around $H_i\cap H_{i+1}$, with $i\in\lbrace 1,2,\dots,M\rbrace$ odd. Indeed, observe that each slice $H_i$ is the union of $M$ parallel slices $H_{j,i}$ with $j\in\lbrace 1,2,\dots,M\rbrace$ whose vertical side has size $N^{-1}$. Call therefore $R_j^i$, with $i$ odd, the number of red cubes in the slice $H_{j,i}$. Our problem is to find the configuration such that the number of colored cubes is maximum. Then, up to some fixed constant, the constraint on the $L^2$ norm yields the following estimates:
\begin{align*}
    \delta^{\frac{1}{2}-\frac{\epsilon}{4}}&\geq \sqrt{\sum_{i=1}^M N^{-\nu}N^{-2}(R^i_1+4R^i_2+9R^i_3+\dots+M^2R^i_M)}\\&=
    \sqrt{ N^{-\nu}N^{-2}(\sum_{i=1}^M R^i_1+4\sum_{i=1}^MR^i_2+9\sum_{i=1}^MR^i_3+\dots+M^2\sum_{i=1}^MR^i_M)}\\&=
    \sqrt{ N^{-\nu}N^{-2}(R_1+4R_2+9R_3+\dots+M^2R_M)},
\end{align*}
where $R_j$ denotes the same of colored subcubes of the $j$-th slice of all $H_i$. 
The maximum amount of the sum of colored subcubes, which is $=\sum_{j=1}^M R_j$, is then trivially computed in the assumption $R_j$ is maximum for $j=1,\dots, \ell$, where $\ell$ has to be estimated thanks to the constraint on the $L^2$-norm. We will call the number $\ell$ the \emph{height} of this configuration. In particular, since each $R_j\leq N^{\nu-1}M$, where we recall that $M=\delta^{-\epsilon}$, 
we have
\begin{equation*}
    \sqrt{N^{-\nu-2}N^{\nu-1}\delta^{-\epsilon}(1+4+\dots+\ell^2)}\leq\delta^{\frac{1}{2}-\frac{\epsilon}{4}}.
\end{equation*}
Since 
\begin{equation*}
    1+4+9+\dots+\ell^2\sim \ell^3,
\end{equation*}
(where the asymptotics is up to a positive constant), we conclude that there exists a positive constant, depending only on the dimension, such that
\begin{equation*}
    \sqrt{N^{-3}\ell^3\delta^{-\epsilon}}\lesssim\delta^{\frac{1}{2}-\frac{\epsilon}{4}},
\end{equation*}
so that
\begin{equation}\label{eq:bound:on:height}
    \ell\leq \delta^{\frac{1}{3}+\frac{\epsilon}{6}}N.
\end{equation}
In particular, the total volume of the colored subcubes can be estimated trivially as
\begin{equation*}
    \ell N^{-1}\delta^{-\epsilon}\lesssim\delta^{\frac{1}{3}-\frac{5\epsilon}{6}},
\end{equation*}
which concludes the proof.
\begin{figure}
    \centering
    \includegraphics[scale=0.65]{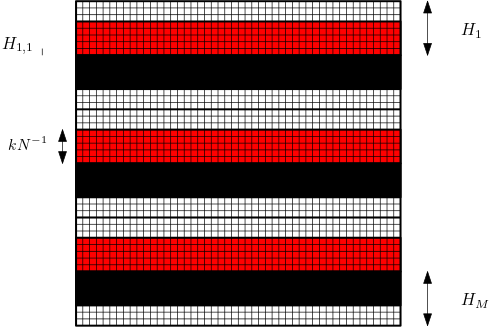}
    \caption{This figure represents the permutation that maximizes the amount of colored subcubes.}
    \label{fig:worst:case:2}
\end{figure}
\end{proof}
We are finally ready to give the proof of Theorem \ref{thm:problem:step:2}, where we will make use of the previous estimate on the amount of maximum colored cubes. This proof will combine Lemma \ref{lem:cost:trajectories}, Corollary \ref{cor:cost:distant:cubes} and Lemma \ref{lem:cost:Step:2}.  Indeed, we first get rid of distant cubes, then we arrange the close ones. 

\begin{proof}[Proof of Theorem \ref{thm:problem:step:2}.]
We divide $[0,1]^\nu$ into the horizontal slices $H_1,H_2\dots, H_M$ defined above, where $M=\delta^{-\epsilon}$. Again, we remark that this proof can be easily extended to higher dimension, but in order to avoid technicalities we present it in dimension $\nu=2$. As before, we observe that, since $|P(\kappa)-\kappa|\leq \delta^\epsilon$, then 
\begin{equation*}
    P(H_i)\subset H_{i-1}\cup H_{i}\cup H_{i+1}.
\end{equation*}

We consider the coloring $\Pi$ defined in Equation \eqref{eq:coloring:3:colors}. We observed above that, by induction, we have that 
\begin{equation*}
    \sharp \lbrace \kappa\in H_i: \Pi(\kappa)=1\rbrace=\lbrace \kappa\in H_{i-1}:\Pi(\kappa)=2\rbrace.
\end{equation*}
In particular, the number of red cubes in $U_j$ is equal to the number of black cubes in $L_j$, for all $j$.  
By Corollary \ref{cor:cost:distant:cubes} we can assume that, up to the cost of order $\delta^{\frac{1}{2}-\frac{3\epsilon}{4}}$, for every slice $U_j$, for every red cube $\kappa$ in $U_j$ there is a black cube $\kappa'$ in $L_j$ such that $|\kappa-\kappa'|\leq \delta^{\epsilon}.$ We can apply Lemma \ref{lem:cost:Step:2}, which gives the maximum volume of colored cubes, since the vertical distance, that is $|P(\kappa)_2-\kappa_2|\leq \delta^\epsilon$. Now we need to swap red and black cubes. We construct the flow as illustrated below.

\medskip

We fix some index $\bar j$ and a couple $U_{\bar j}, L_{\bar j}$ pointing out that the flow constructed in $U_{\bar j}\cup L_{\bar j}$ will be performed simultaneously and independently on all the couples $U_j, L_j$. Our aim is to compute the cost for moving the red cubes in $U_{\bar j}$ into the black cubes in $L_{\bar j}$, in such a way that the white cubes remain white (that is, if $\kappa\subset U_{\bar j}$ with $\tilde P(\kappa)=0$, then the resulting permutation $\tilde\Pi$ is such that $\tilde P(\kappa)\in U_{\bar j}$). Moreover we want the white cubes not to move during the process.

\begin{itemize}
    \item Consider the cubes $K_1,K_2,\dots K_{M}\in\mathcal{R}_{\delta^\epsilon}$ in the stripe $U_{\bar j}$, with $M=\delta^{-\epsilon}$ and $\tilde K_1,\tilde K_2,\dots \tilde K_M\in\mathcal{R}_{\delta^\epsilon}$ those ones in the stripe $L_{\bar j}$, with $\tilde K_h= K_h +\delta^\epsilon \mathbf{e}_2$. Fix $h\in\lbrace 1,2,\dots,M\rbrace$.
    Observe that  we can design an elementary movement $Z$ that acts simultaneously and independently on each couple $K_h,\tilde K_h$ which swaps all red subcubes in $K_h$ for which there exists a black cube in $\tilde K_h$ along its trajectory (see Lemma \ref{lem:cost:trajectories} together with Corollary \ref{cor:cost:distant:cubes}), at a cost given by 
    \begin{equation*}
       L(Z)\lesssim \sqrt{\delta^{\frac{1}{3}-\frac{5\epsilon}{6}}\delta^{2\epsilon}}=\delta^{\frac{1}{6}+\epsilon\frac{7}{12}}.
    \end{equation*}
    This follows by Theorem \ref{thm:cost:nucube} and Corollary \ref{cor:cost:color}, together with the estimate on the $L^2$-norm found in the previous lemma and the simple definition of the discrete Length functional \eqref{eq:discr:length}.
     Observe that the white cubes contained in $K_h$ (resp. $\tilde K_h$) remain confined in the cube $K_h$ (resp. $\tilde K_h$), since each flow arrange the colored cubes in each $K_h/\tilde{K}_h$ and then swap only colored cubes, leaving the white cubes fixed. We can also assume the white cubes do not move (after the swapping, we rearrange the white cubes as they were before it). See Figure \ref{fig:shn21}. The total cost then is given by
     \begin{equation*}
          L(Z)\lesssim \delta^{\frac{1}{6}+\frac{7\epsilon}{12}}.
     \end{equation*}
     At the end of this procedure we do not have anymore red and black cubes, but only white cubes. Moreover, $|P(\kappa)-\kappa|\leq \delta^{\epsilon}$. It is clear that, in the case of any dimension, we have the same estimate.

   \begin{figure}
       \centering
       \includegraphics[scale=0.65]{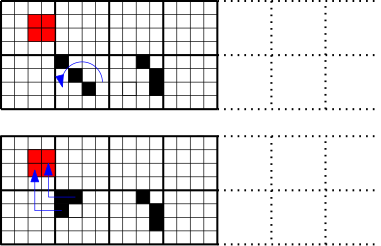}
       \caption{The first step of the proof of Theorem \ref{thm:problem:step:2} ensures that we can perform a discrete flow in the arrays of $K_h\cup \tilde K_h\cup \tilde K_{h+1}$, leaving the white cubes in $\tilde K_h$ fixed.}
       \label{fig:shn21}
   \end{figure}

\end{itemize}

       \textbf{Vertical Subdivision.} It is clear that if we partition the stripes $U_j$, $L_j$ into the cubes of the tiling $\mathcal{R}_{\delta^\epsilon}$ and we apply the same considerations as before acting on those cubes that move from a cube $K$ to a neighboring cube (see Figure \ref{fig:vert:subdivision}), then the cost is computed in the same way.
       
\end{proof}

\subsubsection{Final Step.}
Before proving the last theorem, we need to prove the last step.
\begin{theorem}\label{thm:step:3}
    Let $P\in\mathcal{D}_N$ be a permutation constant the tiling $\mathcal{R}_{\delta^\epsilon}$.  Then there exists a positive constant $C=C(\nu)>0$ such that $\text{dist}(P,Id)\leq C(\nu)\delta^{\epsilon}.$
\end{theorem}
\begin{proof}

For the proof of this result we use Shnirelman's discrete distance with the $S$-elementary movements (being equivalent to our distance, see Theorem \ref{thm:equivalence:distance}).
    We immediately have that 
    \begin{align*}
       \text{dist}^S_{\mathcal D_N}(P,Id)&\leq \sum_{j=1}^T \sqrt{\text{swap}(S_j)}N^{-1-\frac{\nu}{2}}\\ &{\lesssim}N^{\frac{\nu}{2}} N^{-1-\frac{\nu}{2}}T\lesssim (\delta^{\epsilon} N)N^{\frac{\nu}{2}} N^{-1-\frac{\nu}{2}}\lesssim\delta^{\epsilon}.
    \end{align*}
    Here, we have used Theorem \ref{thm:duration} together with the remark that the permutation is constant on the tiling. We have also used that the maximum number of swap that can be performed simultaneously at each time step is $\sim N^{\nu}$.
    
\end{proof}

\begin{figure}
           \centering
           \includegraphics[scale=0.65]{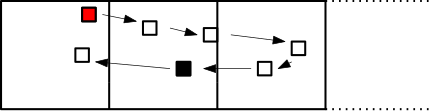}
           \caption{If we consider as red cubes those that are sent into the neighboring cube $K_{(l+1)h}$, for each one of them there exists a black cube in the orbit living in  $K_{(l+1)h}$, thus we can swap them.}
           \label{fig:vert:subdivision}
       \end{figure}
\subsubsection{Proof of the main theorem}
We finally collect all previous results in the proof of our main theorem, that is
\begin{proof}[Proof of Theorem \ref{thm:ineq:discrete}]
    We fix our permutation $P$, then the cost is given by Step 1 (Proposition \ref{prop:Step1}), Step $2$ (Theorem \ref{thm:problem:step:2} and Step $3$ (Theorem \ref{thm:step:3}), namely 
    \begin{equation*}
        L(P,Id)\lesssim \max\left\lbrace \delta^{1-\epsilon}, \delta^{\frac{1}{2}-\frac{3\epsilon}{4}},\delta^{\frac{1}{6}+\epsilon\frac{7}{12}}, \delta^{\epsilon}\right\rbrace,
    \end{equation*}
    where $\epsilon\leq\frac{1}{1+\nu}$, because of Assumption $A$ in Corollary \ref{cor:cost:distant:cubes}. In the case of dimension $\nu=2$ we have that the best choice is $\epsilon=\frac{2}{7}$. We observe that for $\nu\geq 3$ the choice is $\epsilon=\frac{1}{\nu+1}$. 
   
\end{proof}

\begin{appendix}
    
\section{Appendix}\label{appendix}
 We consider an array $A$ of cubes of the tiling $\mathcal{R}_N$ and we are interested in swapping distant cubes. As suggested by Shnirelman's discrete length functional, we expect that the cost for performing this swapping is $\sim \sqrt{2 N^{-\nu} (\ell(A)^2N^{-2})}$, where we recall that $\ell(A)$ is the length of the array $A$. That is, the cost of the swapping is the square root of the total volume of the two cubes times their distance squared. We formalize this fact in the following lemma, where we prove that the swap of two \emph{distant} cubes can be realized as the time-1 map of a divergence-free vector field in $L^1_tL^2_x$. 

The main observation here is that the cost for swapping the two cubes depends only on their volume and on their mutual distance, not on the volume of the intermediate cubes. With reference to Figure \ref{fig:lemma:SS}
\begin{figure}
    \centering
    \includegraphics[scale=0.6]{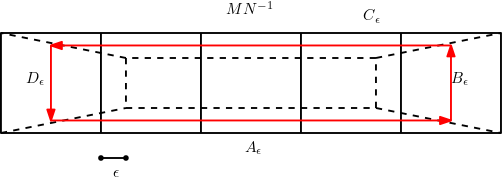}
    \caption{Swapping of distant cubes}
    \label{fig:lemma:SS}
\end{figure}
the flow particles travel along pipes, leaving the particles of the intermediate cubes in their original position. We remark that, though all the proofs are given in dimension $\nu=2$ to avoid technicalities, the statement will be given for any dimension $\nu$.

\begin{lemma}\label{lem:first:exchange}
   Let us fix an array $A\subset\mathcal{R}_N$ of length $\ell(A)=M$ with $M\geq 1$. If we denote $A=[\kappa_1,\dots,\kappa_M]$, then there exists a divergence-free vector field $\mathbf v\in L^1([0,1],L^2([0,1]^\nu))\cap  L^1([0,1],\BV([0,1]^\nu))$ with the following property: if $X:[0,1]^\nu\rightarrow [0,1]^\nu$ denotes the time-1 map of the vector field $\mathbf v$, then there exists a positive constant $C>0$, independent of $M,N$ such that 
    \begin{enumerate}
        \item $\|\mathbf v\|_{L^1_tL^2_x}\leq C \sqrt{N^{-\nu-2}M^2}$;
        \item  the map $X$ translates $\kappa_1$ into $\kappa_M$ (and viceversa), namely \begin{equation*}
        X(x)=\begin{cases} (x_1+MN^{-1}-N^{-1},\dots,x_{\nu-1}, x_\nu ), \quad x\in \text{int}(\kappa_1), \\
        (x_1+N^{-1}-MN^{-1},\dots,x_{\nu-1}, x_\nu) \quad x\in \text{int}(\kappa_M),\\
        x \quad \text{otherwise}.
             \end{cases}
    \end{equation*}
    \end{enumerate} 
   
\end{lemma}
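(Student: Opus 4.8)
The plan is to construct $\mathbf v$ explicitly, exactly as the figure accompanying the lemma suggests, by routing the two end cubes $\kappa_1$ and $\kappa_M$ through thin \emph{pipes} running alongside the array, so that at every instant only a bounded number of cube--volumes of fluid are in motion. I would first reduce to $\nu=2$: orienting the array along $x_1$ and taking the pipes to lie in the $(x_1,x_2)$--plane, the $\nu$--dimensional field is the planar field extended trivially across the remaining $\nu-2$ directions (no velocity component there), which affects neither the swap nor the norms beyond the dimensional constant. After an affine change of coordinates I may assume $A=[0,MN^{-1}]\times[0,N^{-1}]$, with $\kappa_1,\kappa_M$ the two extreme squares.

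The heart of the estimate is a volume-versus-distance heuristic that dictates the whole design: the time-$1$ map must carry a set of measure $\sim N^{-\nu}$ (the two end cubes, thickened by the thin pipes) a distance of order $MN^{-1}$ in unit time, so routing each cube at essentially constant speed $\sim MN^{-1}$ along a pipe of total length comparable to $MN^{-1}$ keeps $\{\mathbf v(t,\cdot)\neq 0\}$ of measure $\lesssim N^{-\nu}$ for every $t$. Hence $\|\mathbf v(t,\cdot)\|_{L^2_x}\lesssim \sqrt{N^{-\nu}}\,MN^{-1}=\sqrt{N^{-\nu-2}M^2}$ uniformly in $t$, and integration over $[0,1]$ gives item (1). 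Concretely I would assemble the pipe flow from the rotation fields $\mathbf v_R$ of \eqref{eq:rot:vf:rectangle}, the only building blocks available: they are divergence-free, and the bound $\|\mathbf v_R\|^2_{L^1_tL^2_x(R)}\leq C|R|\max\{a,b\}^2$ together with their bounded total variation (as in \cite{Bianchini_Zizza_residuality}) supplies at once both the $L^1_tL^2_x$ estimate on each elementary piece and the $L^1_t\BV_x$ regularity of the concatenation.

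I would then verify item (2) by tracking the support of the composite flow: the two end cubes are interchanged by the stated rigid horizontal translation, while every other cube---in particular the intermediate $\kappa_2,\dots,\kappa_{M-1}$---returns to its starting position, so that $X=\mathrm{id}$ there. A clean alternative realization, which makes (2) transparent at the cost of temporarily displacing the intermediate cubes, is to factor the transposition $(\kappa_1\,\kappa_M)$ into $2M-3$ transpositions of \emph{adjacent} cubes and concatenate the corresponding transposition flows \eqref{transposition flow}; since the $L^1_tL^2_x$--length is reparametrization invariant, the total length is the sum of the individual lengths, each of order $N^{-1-\nu/2}$ and independent of $M$, which again sums to $\lesssim MN^{-1-\nu/2}$.

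The main obstacle is the geometric bookkeeping of the genuine pipe construction: since the tiling leaves no spare room, the two pipes must be threaded so that (i) their union with the two moving cubes has measure $O(N^{-\nu})$ at each time, (ii) the two counter-travelling cubes do not collide, and (iii) the map restores \emph{exactly} the intermediate cubes and all cubes outside $A$ at time $1$ while staying divergence-free with a total-variation bound uniform in $M,N$. The delicate point is keeping the constant $C$ independent of both $M$ and $N$ throughout, in particular controlling the squeezing and unsqueezing at the ends of the pipes, where a cube is reshaped into a thin sliver and back; everything else reduces to the elementary rotation-flow estimates already recorded in Section \ref{sect:RLF}.
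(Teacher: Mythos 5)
Your primary route is the same pipe construction the paper carries out, but you leave unexecuted precisely the part that constitutes the paper's proof. The paper defines the pipe region explicitly as four trapezoidal sets $A_\epsilon,B_\epsilon,C_\epsilon,D_\epsilon$ carrying the shear fields \eqref{eq:vect:field}, with the pipe width forced to be $\epsilon=N^{-1}/(M-1)$ by incompressibility (this is exactly what makes your ``support of measure $O(N^{-\nu})$ moving at speed $O(MN^{-1})$'' heuristic come true), and then needs \emph{four} time stages: a rotation of the frame by $\pi$, a counter-rotation of the two end cubes and of the two pipe segments, and two further correction stages. The last two address the point you gloss over: after the rotation and counter-rotation the intermediate cubes $\kappa_2,\dots,\kappa_{M-1}$ are fixed only setwise --- their intersections with the pipes have been internally rearranged --- so the time-$1$ map is a permutation of cubes but not the identity pointwise on them, and the paper must add $\mathbf{v_3},\mathbf{v_4}$ (rotations of each $\kappa_j$ and of $\kappa_j\cap A'_\epsilon$, $\kappa_j\cap C'_\epsilon$) to restore them. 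You list this among the ``obstacles'' but do not resolve it, so as a proof via pipes the proposal is incomplete.

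That said, the ``clean alternative'' you mention in passing is in fact a complete and correct proof of the lemma as stated, and a genuinely more elementary one than the paper's. Factoring $(\kappa_1\,\kappa_M)$ into $2M-3$ adjacent transpositions and concatenating the corresponding flows \eqref{transposition flow} (reparametrized into $[0,1]$) yields a divergence-free field in $L^1_t\BV_x$ whose time-$1$ map translates $\kappa_1$ into $\kappa_M$ and back and restores every intermediate cube pointwise, since each transposition flow acts by rigid translation at its final time; and because the $L^1_tL^2_x$ length is reparametrization invariant, the cost is a sum of $2M-3$ terms each of order $N^{-1-\nu/2}$, i.e. $\lesssim MN^{-1-\nu/2}=\sqrt{M^2N^{-\nu-2}}$, which is item (1). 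What the paper's pipe construction buys over this is that the intermediate cubes are never displaced at any intermediate time, which is the picture the new length functional of Definition \ref{def:new:elem:movement} is meant to encode; the adjacent-transposition route displaces them temporarily. For the statement of Lemma \ref{lem:first:exchange}, which constrains only the time-$1$ map and the norm, either construction suffices.
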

\begin{proof}
    We give the proof for $\nu=2$, the extension to higher dimension being straightforward.  Fix $\epsilon>0$ to be chosen later in order to impose the incompressibility condition on the vector field $\mathbf v$. Call
    \begin{equation*}
        a=\frac{\epsilon}{\epsilon+N^{-1}},\qquad b=aMN^{-1},\qquad c=N^{-1}-b;
    \end{equation*}
    then define the following four sets:
     $$A_\epsilon=\left\lbrace (x,y)\in\mathbb R^2: 0\leq y\leq \epsilon,\quad \frac{y}{a}\leq x\leq \frac{y-b}{-a}\right\rbrace,$$
    $$B_\epsilon=\left\lbrace (x,y)\in\mathbb R^2: MN^{-1}-N^{-1}-\epsilon\leq x\leq MN^{-1}, \quad -ax+b\leq y\leq ax+N^{-1}-b\right\rbrace,$$
   $$C_\epsilon=\left\lbrace (x,y)\in\mathbb R^2: N^{-1}-\epsilon\leq y\leq N^{-1}, \quad -\frac{y-N^{-1}}{a}\leq x\leq \frac{y-c}{a}\right\rbrace,$$
    $$D_\epsilon=\left\lbrace (x,y)\in\mathbb R^2: 0\leq  x\leq N^{-1}+\epsilon, \quad ax\leq y\leq -ax+N^{-1}\right\rbrace.$$
    In each one of these sets we define a shear vector field: 
    \begin{equation}\label{eq:vect:field}
        {\mathbf v_1}(x,y)=\begin{cases}
            \left(-\frac{2}{a}y+\frac{b}{a},0\right),\quad (x,y)\in \text{int}(A_\epsilon), \\
            \\
\left(0,2ax+N^{-1}-2b\right),\quad (x,y)\in \text{int}(B_\epsilon), \\
             \\
             \left(-2\frac{y}{a}+\frac{N^{-1}}{a}+\frac{c}{a},0\right),\quad (x,y)\in \text{int}(C_\epsilon), \\
             \\
             \left(0, 2ax-N^{-1}\right),\quad (x,y)\in \text{int}(D_\epsilon). 
        \end{cases}
    \end{equation}
    It can be proved that, with the choice of
   \begin{equation*}
       \epsilon=\frac{N^{-1}}{M-1},
   \end{equation*}
   the vector field is incompressible.
   In particular, we remark that $\epsilon$ decreases as the distance between the two cubes increases. 

   \textbf{Step 1.} Since we need to construct the flow that swaps the first and the last cube of the array, we consider $F=A_\epsilon\cup B_\epsilon\cup C_\epsilon\cup D_\epsilon\setminus F'$, where $F'$ is the blue set of Figure \ref{fig:lemmaS2}.
   \begin{figure}
       \centering
       \includegraphics[scale=0.5]{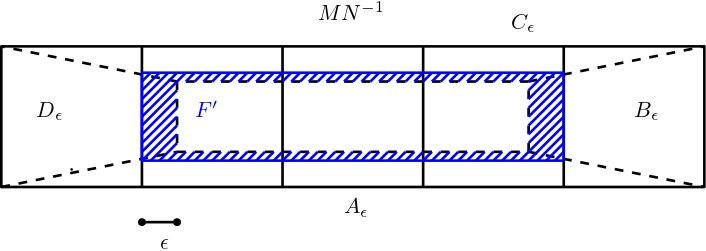}
       \caption{The flow is performed inside the set $A_\epsilon\cup B_\epsilon\cup C_\epsilon\cup D_\epsilon \setminus F',$ where $F'$ denotes the blue frame.}
       \label{fig:lemmaS2}
   \end{figure}
    We consider the flow associated to the vector field defined above, restricted to the set $F$, that is the solution to the Cauchy Problem
    \begin{equation*}
        \begin{cases}
            \dot X(t,(x,y))= \mathbf{ v_1}(X(t, (x,y))), \\
            X(0,(x,y))=(x,y),
        \end{cases}
        (x,y)\in F.
    \end{equation*}
   We observe that in the time interval $t\in[0,2]$ the flow map $X_{t=2}$ has rotated counterclockwise the set $F$ of an angle $\pi$, leaving the intermediate region $A\setminus F$ fixed (see Figure \ref{fig:lemmaS3}). We estimate the $L^2$-norm of the vector field as follows
   \begin{equation*}
       \|\mathbf {v_1}\|^2_{L^2_x}\leq \|\mathbf v_1\|_\infty^2|F|\leq (4M^{2}N^{-2})4N^{-2},
   \end{equation*}
   where we have used that $|F|=2|\kappa_1|+2\frac{N^{-1}}{M} ((M-2)N^{-1}).$
   \begin{figure}
       \centering
       \includegraphics[scale=0.5]{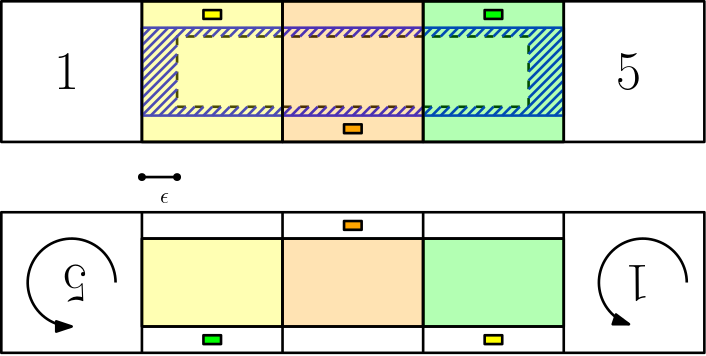}
       \caption{In the time interval $t\in[0,2]$ the flow has rotated counterclockwise the set $F$ of an angle $\pi$. In order to recover the original permutation we have to counter-rotate the set $F$.}
       \label{fig:lemmaS3}
   \end{figure}
   
   \textbf{Step 2.} Here we define the counter rotation vector field. We call
   $$A'_\epsilon=A_\epsilon\setminus (F'\cup (\kappa_1\cup\kappa_M))=[N^{-1},MN^{-1}-N^{-1}]\times\left[0,\frac{N^{-1}}{M}\right],$$
   $$C'_\epsilon=C_\epsilon\setminus (F'\cup (\kappa_1\cup\kappa_M))=[N^{-1},MN^{-1}-N^{-1}]\times\left[N^{-1}-\frac{N^{-1}}{M},N^{-1}\right].$$
    We consider the vector field
   \begin{equation*}
       \mathbf{ v_2}= \mathbf{ v}_{\kappa_1}+\mathbf{ v}_{\kappa_M}+\mathbf{v}_{A'_\epsilon}+\mathbf{v}_{C'_\epsilon},
   \end{equation*}
   where we have used the rotation vector field (Equation \eqref{eq:rot:vf:rectangle}) in the interior of each one of the four sets $\kappa_1,\kappa_M,A'_\epsilon,C'_\epsilon.$ 
 This vector field rotates, in a interval of time $\Delta t=2$, the four sets counterclockwise of an angle $\pi$, as illustrated in Figure. If we estimate the $L^2$-norm of this vector field we find that
 \begin{equation*}
     \|\mathbf{v_2}\|^2_{L^2_x}\leq 2\|\mathbf v_{\kappa_1}\|^2_\infty N^{-2}+2\|\mathbf v_{A'_\epsilon}\|_\infty^2|A'_\epsilon|\leq 4M^2N^{-4}.
 \end{equation*}
 We collect Step 1 and Step 2 defining the vector field of the statement, that is

 \begin{equation*}
     \mathbf v(t,x,y)=\begin{cases}
         8 \mathbf{v_1}(x,y),\quad t\in\left[0,\frac{1}{4}\right), \\
         8 \mathbf{v_2}(x,y), \quad t\in\left[\frac{1}{4},\frac{1}{2}\right).
     \end{cases}
 \end{equation*}
 We could stop the procedure here, since the flow map $T=X_{t=\frac{1}{2}}$ associated to vector field $\mathbf v$ has the following properties:
 \begin{itemize}
     \item $T(\kappa_1)=\kappa_M$ and $T(\kappa_M)=\kappa_1$ and $T\llcorner_{ \kappa_i}$ with $i=1,M$ is a translation;
     \item $T(\kappa_j)=\kappa_j$ for $j\not=1,M$.
 \end{itemize}
So that the map $T$ is a permutation of the cubes of the array. We point out that $T\llcorner_{\kappa_j}$ with $j\not=1,M$ is not the identity. In steps $3$ and $4$ we show how to overcome this issue. 
     \textbf{Step 3.} In this step and the following we correct the map $T$ of the previous two steps such that
     \begin{equation*}
         T\llcorner_{\kappa_j}=Id,\quad \text{a.e. }x,  \quad j\not=1,M. 
     \end{equation*}
     Accordingly to the reference figure, we define
     \begin{equation*}
         \mathbf{v_3}=\sum_{j=2}^{M-1}  \mathbf v_{\kappa_j}.
     \end{equation*}
     Here the estimate of the $L^2$-norm reads as follows:
     \begin{equation*}
         \|\mathbf{v_3}\|^2_{L^2_x}\leq (M-2)\|v_{\kappa_j}\|^2_\infty |\kappa_j|\leq M^2N^{-4}.
     \end{equation*}
     \textbf{Step 4.}
     For this last step we define
     \begin{equation*}
         \kappa_j^A=\kappa_j\cap A'_{\epsilon},\qquad \kappa_j^C=\kappa_j\cap C'_\epsilon,
     \end{equation*}
     and finally
     \begin{equation*}
         \mathbf{v_4}=\sum_{j=2}^{M-1}\left(\mathbf{v}_{\kappa_j^A}+\mathbf{v}_{\kappa_j^C}+\mathbf{v}_{\kappa_j\setminus(\kappa_j^A\cup\kappa_j^c)}\right).
     \end{equation*}
     Here, the computation of the $L^2$-norm is the same as in Step 3. 

        Finally the vector field of the statement is defined as 
         \begin{equation*}
     \mathbf v(t,x,y)=\begin{cases}
         8 \mathbf{v_1}(x,y), \quad t\in\left[0,\frac{1}{4}\right), \\
         8 \mathbf{v_2}(x,y), \quad 
  t\in\left[\frac{1}{4},\frac{1}{2}\right), \\
         8 \mathbf{v_3}(x,y), \quad t\in\left[\frac{1}{2},\frac{3}{4}\right), \\
         8 \mathbf{v_4}(x,y), \quad t\in\left[\frac{3}{4},1\right).\\
     \end{cases}
 \end{equation*}
        This vector field is clearly $\BV$, divergence-free and satisfies the assumptions of the statement. Moreover, we have that
        \begin{align*}
             \|\mathbf v\|_{L^1_tL^2_x}\leq 2\sum_{h=1}^4\|\mathbf{v_h}\|_{L^2_x},
        \end{align*}
        which, together with the estimates obtained for $\|\mathbf{v_h}\|_{L^2_x}$, gives the statement with constant $C=20$, that clearly depends only on the dimension $\nu=2$. 
\end{proof}
\end{appendix}
\bibliography{fluid.bib}
\bibliographystyle{abbrv}
\end{document}